\DeclareMathAlphabet{\mathpzc}{OT1}{pzc}{m}{it}
\newcommand\scalemath[2]{\scalebox{#1}{\mbox{\ensuremath{\displaystyle #2}}}}
\DeclareMathAlphabet{\mathpzc}{OT1}{pzc}{m}{it}
\newtheorem{Definition}{Definition}[subsection]
\newtheorem{Question}[Definition]{Question}
\newtheorem{Theorem}[Definition]{Theorem}
\newtheorem{Lemma}[Definition]{Lemma}
\newtheorem{Proposition}[Definition]{Proposition}
\newtheorem{Corollary}[Definition]{Corollary}
\newtheorem{Remark}[Definition]{Remark}
\newtheorem{Theorem*}{Theorem}[]
\DeclareMathOperator\A{\mathbf{A}}
\DeclareMathOperator\C{\mathbf{C}}
\DeclareMathOperator\F{\mathbf{F}\!}
\DeclareMathOperator\K{\mathbf{K}}
\DeclareMathOperator\Q{\mathbf{Q}}
\DeclareMathOperator\R{\mathbf{R}}
\DeclareMathOperator\T{\mathbf{T}}
\DeclareMathOperator\V{\mathbf{V}}
\DeclareMathOperator\Z{\mathbf{Z}}
\DeclareMathOperator\w{\mathbf{w}}
\DeclareMathOperator\bfu{\mathbf{u}}
\DeclareMathOperator\bfx{\textbf{\textit{x}}}
\DeclareMathOperator\bbA{\mathbb{A}}
\DeclareMathOperator\bbB{\mathbb{B}}
\DeclareMathOperator\bbG{\mathbb{G}}
\DeclareMathOperator\bbH{\mathbb{H}}
\DeclareMathOperator\bbT{\mathbb{T}}
\DeclareMathOperator\calA{\mathcal{A}}
\DeclareMathOperator\calE{\mathcal{E}}
\DeclareMathOperator\calU{\mathcal{U}}
\DeclareMathOperator\calV{\mathcal{V}}
\DeclareMathOperator\calW{\mathcal{W}}
\DeclareMathOperator\calX{\mathcal{X}}
\DeclareMathOperator\calY{\mathcal{Y}}
\DeclareMathOperator\calZ{\mathcal{Z}}
\DeclareMathOperator\scrE{\mathscr{E}}
\DeclareMathOperator\scrF{\mathscr{F}}
\DeclareMathOperator\scrH{\mathscr{H}}
\DeclareMathOperator\scrL{\mathscr{L}}
\DeclareMathOperator\scrO{\mathscr{O}}
\DeclareMathOperator\scrT{\mathscr{T}}
\DeclareMathOperator\frakL{\mathfrak{L}}
\DeclareMathOperator\frakP{\mathfrak{P}}
\DeclareMathOperator\frakX{\mathfrak{X}}
\DeclareMathOperator\frakY{\mathfrak{Y}}
\DeclareMathOperator\frakZ{\mathfrak{Z}}
\DeclareMathOperator\frakd{\mathfrak{d}}
\DeclareMathOperator\frakm{\mathfrak{m}}
\DeclareMathOperator\GL{GL}
\DeclareMathOperator\GSp{GSp}
\DeclareMathOperator\End{End}
\DeclareMathOperator\Hom{Hom}
\DeclareMathOperator\ad{ad}
\DeclareMathOperator\adj{adj}
\DeclareMathOperator\AIP{AIP}
\DeclareMathOperator\alg{alg}
\DeclareMathOperator\an{an}
\DeclareMathOperator\BS{BS}
\DeclareMathOperator\cl{cl}
\DeclareMathOperator\Cone{Cone}
\DeclareMathOperator\Cont{Cont}
\DeclareMathOperator\Cov{Cov}
\DeclareMathOperator\cts{cts}
\DeclareMathOperator\diag{diag}
\DeclareMathOperator\Dist{Dist}
\DeclareMathOperator\Fil{Fil}
\DeclareMathOperator\Fitt{Fitt}
\DeclareMathOperator\fl{fl}
\DeclareMathOperator\hst{hst}
\DeclareMathOperator\image{image}
\DeclareMathOperator\Iw{Iw}
\DeclareMathOperator\mult{mult}
\DeclareMathOperator\multideal{\mathfrak{mult}}
\DeclareMathOperator\ord{ord}
\DeclareMathOperator\one{\mathbbm{1}}
\DeclareMathOperator\oneanti{\breve{\one}}
\DeclareMathOperator\opp{opp}
\DeclareMathOperator\Par{par}
\DeclareMathOperator\sd{sd}
\DeclareMathOperator\Shi{\text{\begin{CJK}{UTF8}{min}し\end{CJK}}}
\DeclareMathOperator\Spa{Spa}
\DeclareMathOperator\Spec{Spec}
\DeclareMathOperator\supp{supp}
\DeclareMathOperator\tol{tol}
\DeclareMathOperator\trans{^{\mathtt{t}}\!}
\DeclareMathOperator\wt{wt}
\DeclareMathOperator\Sch{\textbf{\textsc{Sch}}}
\DeclareMathOperator\Sets{\textbf{\textsc{Sets}}}
\DeclareMathOperator\bfalpha{\boldsymbol{\alpha}}
\DeclareMathOperator\bfbeta{\boldsymbol{\beta}}
\DeclareMathOperator\bfgamma{\boldsymbol{\gamma}}
\DeclareMathOperator\bfdelta{\boldsymbol{\delta}}
\DeclareMathOperator\bflambda{\boldsymbol{\lambda}}
\DeclareMathOperator\bftau{\boldsymbol{\tau}}
\DeclareMathOperator\bfupsilon{\boldsymbol{\upsilon}}
\DeclareMathOperator\llbrack{\![\![\!}
\DeclareMathOperator\rrbrack{\!]\!]}
\DeclareMathOperator\bls{\boldsymbol{[}}
\DeclareMathOperator\brs{\boldsymbol{]}}
\DeclareMathOperator\bla{\boldsymbol{\langle}}
\DeclareMathOperator\bra{\boldsymbol{\rangle}}
\renewcommand{\maketitle}{\bgroup\setlength{\parindent}{0pt}
\begin{flushleft}
  \LARGE{\textbf{\@title}}
  
  \vspace{4mm}
  
  \large{\textsc{\@author}}
  
  \vspace{4mm}
\end{flushleft}\egroup
}
\title{A pairing on the cuspidal eigenvariety for $\mathrm{GSp}_{2g}$ and the ramification locus}
\author{Ju-Feng Wu}
\date{}
\begin{document}

\maketitle

%\noindent\textit{To people suffering from the coronavirus epidemy in 2019-2020.}

{\footnotesize 
\paragraph{Abstract.} In the present paper, we first construct a pairing on the space of analytic distributions associated with $\mathrm{GSp}_{2g}$. By considering the overconvergent parabolic cohomology groups and following the work of Johansson--Newton, we construct the cuspidal eigenvariety for $\mathrm{GSp}_{2g}$. The pairing on the analytic distributions then induces a pairing on some coherent sheaves of the cuspidal eigenvariety. As an application, we follow the strategy of Bella\"{i}che to study the ramification locus of the cuspidal eigenvariety over the corresponding weight space.  
}

\tableofcontents

\section{Introduction} \addtocontents{toc}{\protect\setcounter{tocdepth}{1}}
\subsection{Overview} After the introduction of the \textit{eigencurve} by R. Coleman and B. Mazur in \cite{Coleman_Mazur}, there were many other mathematicians who contributed to its study. The eigencurve is a rigid analytic curve which parameterises overconvergent Hecke eigenforms of finite slope and its geometry is very interesting and quite mysterious. For example we don't know even in an example if the eigencurve has finitely or infinitely many irreducible components. \\

It is a natural question to ask whether one can generalise the notion of the eigencurve to $p$-adic automorphic forms on other Shimura variety. In \cite{AIP-2015}, F. Andreatta, A. Iovita and V. Pilloni construct sheaves of (families of) overconvergent Siegel modular forms. As an application, they construct an \textit{eiegnvariety} $\calE^{\mathrm{AIP}}$, parametrising overconvergent cuspidal Siegel eigenforms of finite slope and raise the following question:

\begin{Question}[$\text{\cite[Open Problem 1]{AIP-2015}}$]\label{Question AIP}
Let $\calW$ be the weight space. Is the weight map $\calE^{\AIP}\rightarrow \calW$ unramified at classical points?
\end{Question}

On the other hand, G. Stevens introduced in \cite{Stevens-MS} the overconvergent modular symbols as a new tool to study the eigencurve, method of study which was taken over by other authors, for instance, \cite{Bellaiche-critical}, \cite[Chapter VIII]{Bellaiche-eigenbook}, \cite{Kim} and \cite{Park}. By using the overconvergent modular symbols, W. Kim constructed a pairing on the cuspidal eigencurve and applied it to the study of the ramification locus of the cuspidal eigencurve over the weight space in his Ph.D. thesis \cite{Kim}. Such a construction and results were rewritten in a more conceptual way by J. Bella\"{i}che in \cite[Chapter VIII]{Bellaiche-eigenbook}.\\

The idea of overconvergent modular symbols turns out to be a powerful tool to generalise the results to more general reductive groups by considering the overconvergent cohomology. One names \cite{Ash-Stevens}, \cite{Urban-2011} and \cite{Hansen-PhD} for the generalisation in such a direction. Furthermore, C. Johansson and J. Newton stepped further to give such a formalism in the language of adic spaces in \cite{Johansson-Newton}, which consequently allows one to read the information over the $p=0$ locus of the weight space.\\

The work presented in this paper is motivated by Question \ref{Question AIP} and is highly inspired by \cite[Chapter VIII]{Bellaiche-eigenbook} and \cite{Johansson-Newton}. More precisely, we construct a pairing on the overconvergent cohomology groups for $\GSp_{2g}$ and adapt the formalism in \textit{op. cit.} to construct the corresponding cuspidal eiegnvariety $\calE_0$ by working with the parabolic cohomology. Following the philosophy presented in \cite[Chapter VIII]{Bellaiche-eigenbook}, we attempt to use such a pairing to detect the ramification locus of $\calE_0$ over the weight space $\calW$, aiming to provide a (partial) answer to Question \ref{Question AIP}. \\

The results of the presenting paper are summarised in the following theorem. 

\begin{Theorem}\label{Theorem: summary of the paper}
Fix a $g\in \Z_{>0}$, an odd prime number $p$ and an integer $N>3$ such that $p\nmid N$. Let $X_{\Iw^+}(\C)$ be the $\C$-points of the Siegel modular variety parametrising principally polarised abelian varieties of genus $g$ with level structure given by \begin{align*}
    \Gamma(N) & =\{\bfgamma\in \GSp_{2g}(\widehat{\Z}): \bfgamma\equiv \one_g\mod N\} \quad \text{ and }\\ 
    \Iw_{\GSp_{2g}}^+ & = \left\{\bfgamma\in \GSp_{2g}(\Z_p): \bfgamma\equiv \left(\begin{array}{ccc|ccc}
            * &&  & * &\cdots & * \\ & \ddots & & \vdots & & \vdots \\ && * & * & \cdots & *\\ \hline &&& * \\ &&&& \ddots \\ &&&&& * 
        \end{array}\right)\mod p\right\}.
\end{align*} We have the following\begin{enumerate}
    \item There exists a cuspidal eigenvariety $\calE_{0}\xrightarrow{ \wt}\calW$, parametrising the eigenvectors of finite slope parabolic cohomology groups $H^{\bullet}_{\Par}(X_{\Iw^+}(\C), D_{\kappa}^{\dagger})^{\leq h}$. Denote by $\calE$ the eigenvariety constructed in \cite{Johansson-Newton} by using the algebraic group $\GSp_{2g}$, then there is a closed immersion $\calE_0\hookrightarrow \calE$ of adic spaces over $\calW$ and $\calE_0$ is the cuspidal part of $\calE$. (see \S \ref{subsection: cuspidal eigenvariety}) 
    
    \item Let $\calZ$ be the Fredholm hypersurface in \S \ref{subsection: cuspidal eigenvariety} and let $\pi^{\calE_0}_{\calZ}: \calE_0\rightarrow \calZ$ be the structure morphism. Let $\scrH_{\Par}^{\tol}$ be the coherent sheaf associated to (finite-slope) total parabolic cohomology groups $H_{\Par}^{\tol}(X_{\Iw^+}(\C), D_{\kappa}^{\dagger})^{<h}$ on $\calZ$. Then there is a pairing $$(\pi^{\calE_0}_{\calZ})^*\scrH^{\tol}_{\Par}\times (\pi^{\calE_0}_{\calZ})^*\scrH^{\tol}_{\Par}\rightarrow \scrO_{\calE_0}\quad (\text{resp. }\scrH^{\tol}_{\Par}\times \scrH^{\tol}_{\Par}\rightarrow \scrO_{\calZ})$$ of coherent sheaves on $\calE_0$ (resp., $\calZ$). (see Corollary \ref{Corollary: pairing on the eigenvariety})
    
    \item Suppose $\bfx\in \calE_0^{\fl}$ is a good classical point (see Corollary \ref{Corollary: main result of the paper}) Then there exists a function $L^{\adj}_{\calV}$ on a small enough clean neighbourhood $\calV$ of $\bfx$, determined uniquely by the above pairing up to a unit in the eigenalgebra, such that $$L^{\adj}_{\calV}(\bfx)=0\text{ if and only if }\wt \text{is ramified at }\bfx.$$ (see Theorem \ref{Theorem: ramification locus} and Corollary \ref{Corollary: main result of the paper})
    
    \item Retain the situation as above and assume further that $\bfx$ is a smooth point in $\calE_0^{\fl}$. Let $e(\bfx)$ be the quantity depending on $\bfx$ and $\wt$ defined in Theorem \ref{Theorem: vanishing order and the ramification index}, then $$\ord_{\bfx}L^{\adj}=e(\bfx).$$ (see Theorem \ref{Theorem: vanishing order and the ramification index} and Corollary \ref{Corollary: main result of the paper})
\end{enumerate}
\end{Theorem}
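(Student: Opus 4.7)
The plan is to mirror the Johansson--Newton adic eigenvariety machine with parabolic cohomology coefficients as the input, to upgrade a pairing on analytic distributions to a self-pairing on the relevant coherent sheaf, and then to transcribe Bella\"{i}che's adjoint-$L$-function argument from the $\GL_2$ setting to the $\GSp_{2g}$ setting. I would split the work into the four parts exactly as in the statement and treat them in order, since each one feeds its successor.

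For part (1), I would first recall the analytic distribution modules $D_\kappa^\dagger$ attached to the spherical weight space $\calW$ for $\GSp_{2g}$ and establish that they are slope-adapted, compact, and ON-able in the sense needed by Johansson--Newton. Then I would replace ordinary overconvergent cohomology by parabolic cohomology $H^\bullet_{\Par}(X_{\Iw^+}(\C),D_\kappa^\dagger)$, defined as the image of compactly supported cohomology in ordinary cohomology (equivalently, the kernel of restriction to the boundary of the Borel--Serre compactification). The main thing to check is that the $U_p$-operator on parabolic cohomology remains compact and that the boundary complex behaves well enough under slope decomposition, so that the Fredholm hypersurface $\calZ$ and the associated eigenvariety $\calE_0$ exist. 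The closed immersion $\calE_0\hookrightarrow\calE$ then follows from functoriality of the eigenvariety machine applied to the short exact sequence relating parabolic, compactly supported, and ordinary cohomology.

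For part (2), I would first construct, by direct calculation, a $\GSp_{2g}$-equivariant pairing on $D_\kappa^\dagger$ (with a twist on one factor by a suitable involution, analogous to the matrix $w_N$ in the $\GL_2$ case) taking values in an appropriate weight character, and check that it is compatible with variation in $\kappa$. Cup product with coefficients then yields a pairing on (compactly supported against ordinary) overconvergent cohomology, which one restricts to parabolic cohomology to obtain the desired $\scrH^{\tol}_{\Par}\times\scrH^{\tol}_{\Par}\to\scrO_{\calZ}$, and pulls back by $\pi^{\calE_0}_{\calZ}$. The subtle step here is verifying that the pairing factors through the parabolic quotient, which reduces to showing that the boundary classes are isotropic; this will be the principal technical obstacle in this half of the paper, as it essentially demands a careful cohomological identification of the dual of a parabolic class.

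For parts (3) and (4), I would follow Bella\"{i}che's formalism from \cite[Chapter VIII]{Bellaiche-eigenbook} almost verbatim, the key translation being that at a \emph{good} classical point $\bfx\in\calE_0^{\fl}$ the generalized eigenspace of $H^\bullet_{\Par}(X_{\Iw^+}(\C),D_\kappa^\dagger)$ is one-dimensional (so the pairing degenerates to a scalar-valued function $L^{\adj}_{\calV}$ on a clean neighbourhood $\calV$) and this function is well-defined up to an eigenalgebra unit. Non-vanishing of $L^{\adj}_{\calV}(\bfx)$ is equivalent to the pairing being perfect at $\bfx$, which by the standard Hecke-algebra/duality dictionary is equivalent to $\wt$ being unramified at $\bfx$. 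For part (4), at a smooth point one localizes the weight map to a map of regular local rings, writes the pairing in terms of local parameters, and reads off $\ord_{\bfx}L^{\adj}=e(\bfx)$ by matching the length of the cokernel of the weight map with the order of vanishing of a determinant; the auxiliary quantity $e(\bfx)$ is chosen precisely so that this bookkeeping works out, and the argument is linear-algebraic once the pairing is in place.
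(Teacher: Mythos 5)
Your plan has the right skeleton, and parts (1) and (2) match the paper's approach: one uses the Johansson--Newton adic formalism with parabolic cohomology (the image of compactly supported in ordinary cohomology) as input, and the sheaf pairing comes from cup products between compactly supported and ordinary cohomology applied to a pairing on the distribution modules. Two caveats. First, the distribution-level pairing is not $\GSp_{2g}$-equivariant in the naive sense; the paper's $\bls\cdot,\cdot\brs_\kappa^\circ$ is built from a highest-weight vector $e_\kappa^{\hst}$ and satisfies only the twisted adjunction $\bls\bfalpha\cdot\mu_1,\mu_2\brs_\kappa^\circ = \bls\mu_1,\bfalpha^{\Shi}\cdot\mu_2\brs_\kappa^\circ$ with $\bfalpha^{\Shi}$ involving a transpose and $p$-rescaling (Proposition \ref{Proposition: main property of the pairing}); one must then separately verify $U_{p,i}$-equivariance (Corollary \ref{Corollary: pairing on the eigenvariety}). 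This is also the reason the paper works with the \emph{strict} Iwahori subgroup $\Iw_{\GSp_{2g}}^+$ rather than the standard one. Second, the well-definedness on parabolic cohomology is handled not by showing boundary classes are isotropic for some auxiliary pairing, but by directly invoking compatibility of the three cup products $H^t_c\times H^{2n_0-t}$, $H^t_c\times H^{2n_0-t}_c$, $H^t\times H^{2n_0-t}_c\to H^{2n_0}_c$ to see that the value depends only on the images in parabolic cohomology (Proposition \ref{Proposition: well-defined pairing on the parabolic cohomology}); this is a cleaner route than the one you sketch.

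The genuine gap is in parts (3) and (4). You claim to follow Bella\"{i}che ``almost verbatim'' and that $L^{\adj}_{\calV}(\bfx)\neq 0 \Leftrightarrow \wt$ unramified follows from a ``standard Hecke-algebra/duality dictionary.'' That conflates the commutative-algebra input (Noether's different, Auslander--Buchsbaum, and the identification $\scrL^{\adj}(\calV)=\frakd(R_\calV/R_\calY)$, cf.\ Proposition \ref{Proposition: relation of L-ideal and Noether's different} and Theorem \ref{Theorem: ramification locus}) with the genuinely analytic input it depends on, namely the \emph{non-degeneracy} of $\bls\cdot,\cdot\brs_{\kappa_\calU}$ on $\eta_\calV H^{\tol,\leq h}_{\Par,\kappa_\calU}$. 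That non-degeneracy is not automatic and your proposal does not address it. In the $\GL_2$ case Bella\"{i}che establishes it for general weights by an explicit computation; the paper explicitly notes that this computation does not scale to $\GSp_{2g}$ and instead proves non-degeneracy only for dominant algebraic $k$, via two nontrivial steps: (a) an explicit comparison of the distribution-level pairing specialised to $\V_{\GSp_{2g},k}^{\alg,\vee}$ with the symplectic pairing $\bla\cdot,\cdot\bra_k$ twisted by the Atkin--Lehner matrix $\w_p$ (Proposition \ref{Proposition: non-degeneracy on algebriac representation}), and (b) a control theorem (Theorem \ref{Theorem: control theorem}) identifying small-slope parabolic cohomology with $D_k^{\dagger}$-coefficients to that with $\V_{\GSp_{2g},k}^{\alg,\vee}$-coefficients; the slope bound $h<h_k$ built into ``good classical point'' is precisely what makes (b) apply. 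Without supplying both of these, your argument for parts (3) and (4) is conditional on a hypothesis you have not shown ever holds; and the description of a good point as having one-dimensional generalized eigenspace is also not quite right --- the relevant condition is freeness of rank one of $\eta_\calV H^{\tol,\leq h}_{\Par,\kappa_\calU}$ and of its $R_\calY$-dual over the local Hecke algebra $R_\calV$, which is what makes the Gorenstein/duality machinery run.
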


\subsection{Some remarks} The works presented in this paper have their connections to some known results. We summarise them in the following remarks: \begin{enumerate}
    \item We should remark first that the $p$-adic subgroup considered in this paper is slightly different from the other authors. This is due to an issue when constructing the pairing. However, the underlying distribution spaces $D_{\kappa}^r(\T_0, R)$ appearing in the present paper are isomorphic to the ones considered in \cite{Johansson-Newton} (in the case of $\GSp_{2g}$). Thus, the strategies of the known literature can go through after changing the underlying locally symmetric space. In particular, one can expect the comparisons of eigenvarieties in the next remark.
    
    \item There are comparisons of the cuspidal eigenvariety $\calE_0$ considered in this paper with the other eigenvarietyies constructed by others. As mentioned above, $\calE_0$ is the cuspidal part of the eigenvariety $\calE$ constructed in \cite{Johansson-Newton} when considering $\GSp_{2g}$. By [\textit{op. cit.}, Remark 4.1.9], the eigenvariety $\calE^{\text{Han}}$ constructed by D. Hansen in \cite{Hansen-PhD} is the open locus of $\calE$ on which $p\neq 0$. On the other hand, there is a closed immersion mapping from the eigenvariety $\calE^{\text{Urb}}$ constructed by E. Urban in \cite{Urban-2011} to $\calE^{\text{Han}}$ by the introduction of \cite{Hansen-PhD}. Denote by $\calE^{\text{Urb}}_0$ the cuspidal part of $\calE^{\text{Urb}}$ and let $\calE^{\text{Urb}}_{0, \text{red}}$ be the reduced cuspidal eigenvariety of $\calE^{\text{Urb}}$, then it coincides with the eigenvariety $\calE^{\text{AIP}}$ constructed in \cite{AIP-2015} (see \cite[pp. 627]{AIP-2015}). In conclusion, the comparisons among these eigenvarieties can be summarised in the following diagram $$\begin{tikzcd}
    \calE^{\text{AIP}}\arrow[r, equal] & \calE^{\text{Urb}}_{0, \text{red}}\arrow[rrr, hook, "\text{closed immersion}"]\arrow[d, hook, "\substack{\text{reduced}\\ \text{cuspidal part}}"'] & & & \calE_0^{\text{Han}}\arrow[rrr, hook, "\text{open immersion}", "p\neq 0"']\arrow[d, hook, "\substack{\text{cuspidal}\\ \text{part}}"'] & & & \calE_0\arrow[d, hook, "\substack{\text{cuspidal}\\ \text{part}}"]\\
    & \calE^{\text{Urb}}\arrow[rrr, hook, "\text{closed immersion}"] & & & \calE^{\text{Han}}\arrow[rrr, hook, "p\neq 0"', "\text{open immersion}"] & & & \calE
    \end{tikzcd}.$$
\end{enumerate}

During the study of the present work, we also encounter the following (natural) questions that are worth for further studies:\begin{enumerate}[resume]
    \item The function $L^{\adj}$ in the $\GL_2$ case was justified to $p$-adically interpolate the adjoint $L$-values associated to a family of eigen-newforms in \cite{Kim}. We analogously call $L^{\adj}$ an ``adjoint $p$-adic $L$-function'' in our case, hence the justification is required . In \cite[\S 12]{Genestier-Tilouine}, A. Genestier and J. Tilouine established a pairing of the parabolic cohomology groups for $\GSp_4$ by using the symplectic pairing on the algebraic representations of $\GSp_{4}$ and the cup product. Such a pairing is related to the cardinality of the Selmer group for the adjoint Galois representation attached to $\GSp_{4}$ by [\textit{op.cit.}, Th\'{e}or\`{e}me 12.0.1]. Moreover, the cardinality of the Selmer group is suggested to be related to the adjoint $L$-value in the discussion after \textit{loc. cit.}. This suggested that once one can relate our pairing with the pairing introduced in [\textit{op. cit.}, \S 12], then the justification of the name can be done. However, such a relation is unknown to us due to the fact that the authors of \cite{Genestier-Tilouine} work with the Siegel modular variety without level structure at $p$ (in order to apply the Taylor--Wiles method) while the Siegel modular variety with $\Iw_{\GSp_{2g}}^+$-level structure at $p$ is considered in our case. The comparison between the Petersson norms of a Siegel modular form of prime-to-$p$ level and of a $p$-stabililsed Siegel modular form is conjectured to be involved in solving this problem.
    
    \item A key ingredient to obtain the results in Corollary \ref{Corollary: main result of the paper} is the non-degeneracy of the pairing. We remark that the author of \cite{Bellaiche-eigenbook} can prove such a non-degeneracy for more general weights in the $\GL_2$ case while we can only show this for classical weights. It is difficult to adapt the proof in \textit{op. cit.} since it relies on a straight forward computation and such a computation becomes messier and messier as $g$ grows. 
    
    \item  Following the strategy in \cite{Bellaiche-eigenbook}, we defined the notion of ``good points''. The author of \textit{op. cit.} could show that the set of good points is not empty in the case of $\GL_2$ by using the Eichler--Shimura isomorphism. However, we do not know if the set of good points in our situation is nonempty. 
\end{enumerate}

\paragraph{Outline of the paper.} The presenting work is organised as follows. In Section 2, we introduce the coefficients of the cohomology groups in our concern, the analytic distributions. Our modules of analytic distributions are defined by combining the formalisms in both \cite{AIS-2015} and \cite{Johansson-Newton}. Then, a pairing on the analytic distributions is constructed. Such a pairing is essential for the pairing on the overconvergent cohomology groups in Section 3. Additionally, we also construct the cuspidal eigenvariety in Section 3 after recalling a sufficient amount of terminologies from \cite{Johansson-Newton}. As mentioned before, our cuspidal eigenvariety sits inside the whole eigenvariety constructed in \textit{op. cit.} by considering the algebraic group $\GSp_{2g}$. In the final section, we apply the pairing we constructed to detect the ramification locus of the cuspidal eigenvariety over the weight space by following the strategy of \cite[Chapter VIII]{Bellaiche-eigenbook} closely.

\paragraph{Acknowledgement.} The present work is part of the author's Ph.D. project. He would like to thank his advisors Adrian Iovita and Giovanni Rosso for their enormously useful discussions regarding the materials in this paper. The author is grateful to Francesc Gispert for his careful reading and suggestions on the early drafts of this paper. The author would also like to thank Lennart Gehrmann for interesting conversations during the work of this article. %Last but not least, the author thanks the anonymous referee for the constructive suggestions and comments. 

\paragraph{Notations and conventions} Throughout this paper, we fix the following notations and conventions: \begin{enumerate}
    \item[$\bullet$] $g\in \Z_{>0}$ (we are in particular interested in the case when $g>1$)
    \item[$\bullet$] $p\in \Z_{> 0}$ an odd prime number
    \item[$\bullet$] we fix once and forever an algebraic isomorphism $\C_p\simeq \C$
    \item[$\bullet$] for any matrix $\bfalpha$, we write $\trans\bfalpha$ for its transpose
    \item[$\bullet$] for any $n\in \Z_{>0}$, we denote by $\one_n$ the $n\times n$ identity matrix and by $\oneanti_n$ the $n\times n$-matrix whose entries are $1$ on the anti-diagonal positions and $0$ elsewhere, \textit{i.e.}, $$\oneanti_n=\begin{pmatrix} & & 1\\ & \iddots & \\ 1 & &\end{pmatrix}$$
    \item[$\bullet$] in principle, symbols in Gothic font (\textit{e.g.}, $\frakX, \frakY, \frakZ$) stand for formal schemes; symbols in calligraphic font (\textit{e.g.}, $\calX, \calY, \calZ$) stand for adic spaces; and symbols in script font (\textit{e.g.}, $\scrO, \scrF, \scrE$) stand for sheaves (over certain geometric object).
\end{enumerate}

\addtocontents{toc}{\protect\setcounter{tocdepth}{2}}

\section{Analytic distributions}\label{section: distributions}
\subsection{Algebraic and \texorpdfstring{$p$}{p}-adic groups}
Let $\V_{\Z}$ be the finite free $\Z$-module $\Z^{2g}$. By viewing elements in $\V_{\Z}$ as column vectors, we equip $\V_{\Z}$ with the symplectic pairing
\[
\bla\cdot, \cdot \bra: \V_{\Z}\times \V_{\Z}\rightarrow \Z, \quad (\vec{v}, \vec{v'})\mapsto \trans\vec{v}\begin{pmatrix} & -\oneanti_g\\ \oneanti_g\end{pmatrix}\vec{v}'.
\]
Then, the algebraic group $\GSp_{2g}$ is defined to be the subgroup of the automorphisms on $\V_{\Z}$ that preserves this pairing up to a unit. In particular, for any ring $R$,
\[\scalemath{1}{
\GSp_{2g}(R) : = \left\{\bfgamma\in \GL_{2g}(R): \trans\bfgamma\begin{pmatrix} & -\oneanti_g\\ \oneanti_g\end{pmatrix}\bfgamma = \varsigma(\bfgamma)\begin{pmatrix} & -\oneanti_g\\ \oneanti_g\end{pmatrix}\text{ for some }\varsigma(\bfgamma)\in R^\times\right\}.}
\]
Equivalently, for any $\bfgamma=\begin{pmatrix}\bfgamma_a & \bfgamma_b\\ \bfgamma_c & \bfgamma_d\end{pmatrix}\in \GL_{2g}$, $\bfgamma\in \GSp_{2g}$ if and only if  \begin{align*}
    & \trans\bfgamma_a\oneanti_g\bfgamma_c=\trans\bfgamma_c\oneanti_g\bfgamma_a,\\
    & \trans\bfgamma_b\oneanti_g\bfgamma_d=\trans\bfgamma_d\oneanti_g\bfgamma_b,\text{ and }\\
    & \trans\bfgamma_a\oneanti_g\bfgamma_d-\trans\bfgamma_c\oneanti_g\bfgamma_b=\varsigma(\bfgamma)\oneanti_g \text{ for some $\varsigma(\bfgamma)\in \bbG_m$.}
\end{align*} \\

In the present paper, we shall be considering the following algebraic and $p$-adic subgroups of $\GL_g$ and $\GSp_{2g}$: \begin{enumerate}
    \item[$\bullet$] We consider the Borel subgroups $B_{\GL_g}$ and $B_{\GSp_{2g}}$ for $\GL_g$ and $\GSp_{2g}$ respectively, defined by \begin{align*}
        B_{\GL_g} & := \text{the Borel subgroup of upper triangular matrices in $\GL_g$}\\
        B_{\GSp_{2g}} & := \text{the Borel subgroup of upper triangular matrices in $\GSp_{2g}$.}
    \end{align*} Remark that one can take the Borel subgroup of upper triangular matrices in $\GSp_{2g}$ because of the choice of the symplectic pairing on $\V_{\Z}$.
    
    \item[$\bullet$] The corresponding unipotent radicals are of the form \begin{align*}
        U_{\GL_g} := & \text{ the upper triangular $g\times g$ matrices}\\
        & \text{ whose diagonal entries are all $1$}\\
        U_{\GSp_{2g}} := & \text{ the upper triangular $2g\times 2g$ matrices in $\GSp_{2g}$}\\
        & \text{ whose diagonal entries are all $1$}.
    \end{align*} 
    
    \item[$\bullet$] The maximal tori for both algebraic groups are considered to be the maximal algebraic tori of diagonal matrices. Then the Levi decomposition yields $$B_{\GL_g}=U_{\GL_g}T_{\GL_g}\text{ and }B_{\GSp_{2g}}=U_{\GSp_{2g}}T_{\GSp_{2g}}.$$
    
    \item[$\bullet$] Denote by $U_{\GL_g}^{\opp}$ and $U_{\GSp_{2g}}^{\opp}$ the opposite unipotent radical of $U_{\GL_g}$ and $U_{\GSp_{2g}}$ respectively.
    \item[$\bullet$] To simplify the notation, we write \[
        \begin{array}{cc}
            T_{\GL_g, 0}=T_{\GL_g}(\Z_p),&  U_{\GL_g, 0}=U_{\GL_g}(\Z_p),\\ T_{\GSp_{2g}, 0}=T_{\GSp_{2g}}(\Z_p) & U_{\GSp_{2g}, 0}=U_{\GSp_{2g}}(\Z_p). 
        \end{array}
    \]For any $s\in \Z_{>0}$, we define \begin{align*}
        T_{\GL_g, s}& :=\ker(T_{\GL_g}(\Z_p)\rightarrow T_{\GL_g}(\Z/p^s\Z)), \\
        U_{\GL_g, s} & :=\ker(U_{\GL_g}(\Z_p)\rightarrow U_{\GL_g}(\Z/p^s\Z))\\
        T_{\GSp_{2g}, s}& :=\ker(T_{\GSp_{2g}}(\Z_p)\rightarrow T_{\GSp_{2g}}(\Z/p^s\Z)), \\
        U_{\GSp_{2g}, s} & :=\ker(U_{\GSp_{2g}}(\Z_p)\rightarrow U_{\GSp_{2g}}(\Z/p^s\Z)),
    \end{align*} where the all maps above are reduction maps. 
    
    \item[$\bullet$] The Iwahori subgroups of $\GL_g(\Z_p)$ and $\GSp_{2g}(\Z_p)$ are \begin{align*}
        \Iw_{\GL_g} := & \text{ the preimage of $B_{\GL_g}(\F_p)$ under the reduction map}\\
        & \text{ $\GL_g(\Z_p)\rightarrow \GL_g(\F_p)$}\\
        \Iw_{\GSp_{2g}} := & \text{ the preimage of $B_{\GSp_{2g}}(\F_p)$ under the reduction map}\\
        & \text{ $\GSp_{2g}(\Z_p)\rightarrow \GSp_{2g}(\F_p)$}.
    \end{align*} Then the Iwahori decomposition gives $$\Iw_{\GL_g}=U_{\GL_g, 1}^{\opp} T_{\GL_g, 0}U_{\GL_g, 0}\text{ and }\Iw_{\GSp_{2g}} = U_{\GSp_{2g}, 1}^{\opp} T_{\GSp_{2g}, 0}U_{\GSp_{2g}, 0},$$ where $U_{\GL_g, s}^{\opp}$ and $U_{\GSp_{2g}, s}^{\opp}$ are defined in the same way as above for any $s\in \Z_{>0}$.
    
    \item[$\bullet$] We shall consider the ``\textit{strict} Iwahori subgroups'' of $\GL_g(\Z_p)$ and $\GSp_{2g}(\Z_p)$, defined as \begin{align*}
        \Iw_{\GL_g}^+  : = & \text{ the preimage of $T_{\GL_g}(\F_p)$ under the reduction map}\\
        & \text{ $\GL_g(\Z_p)\rightarrow \GL_g(\F_p)$}\\
        \Iw_{\GSp_{2g}}^+ := & \left\{\bfgamma\in \GSp_{2g}(\Z_p): \bfgamma\equiv \left(\begin{array}{ccc|ccc}
            * &&  & * &\cdots & * \\ & \ddots & & \vdots & & \vdots \\ && * & * & \cdots & *\\ \hline &&& * \\ &&&& \ddots \\ &&&&& * 
        \end{array}\right)\mod p\right\}
    \end{align*} We caution the readers that the strict Iwahori subgroups $\Iw_{\GL_g}^+$ and $\Iw_{\GSp_{2g}}^+$ are not defined analogously. We abuse the similar symbol to simplify the notations. \\
    
    Observe that for any $\begin{pmatrix}\bfgamma_a & \bfgamma_b\\ \bfgamma_c & \bfgamma_d\end{pmatrix}\in \Iw_{\GSp_{2g}}^+$, we have $\bfgamma_a\in \Iw_{\GL_g}$. Moreover, $\Iw_{\GL_g}^+$ is stable under transpose. \\
    
    Obviously, we have $\Iw_{\GL_g}^+\subset \Iw_{\GL_g}$ and $\Iw_{\GSp_{2g}}^+\subset \Iw_{\GSp_{2g}}$. Thus, the Iwahori decompositions for $\Iw_{\GL_g}$ and $\Iw_{\GSp_{2g}}$ induce the Iwahori decompositions for $\Iw_{\GL_g}^+$ and $\Iw_{\GSp_{2g}}^+$ :
    \begin{align*}
        \Iw_{\GL_g}^+ & = U_{\GL_g, 1}^{\opp} T_{\GL_g, 0}U_{\GL_g, 1}\\
        \Iw_{\GSp_{2g}}^+ & = U_{\GSp_{2g}, 1}^{\opp} T_{\GSp_{2g}, 0} U_{\GSp_{2g}, 0}^+,
    \end{align*} where $U_{\GSp_{2g}, 0}^+ = \Iw_{\GSp_{2g}}^+ \cap U_{\GSp_{2g}, 0}$.
\end{enumerate}

\subsection{Analytic distributions}
The analytic functions and distributions that will be considered here are heavily inspired by the notions in \cite{AIS-2015} and \cite{Johansson-Newton}; we indeed combine their ideas to define the objects that we are interested in. Before everything, we recall the terminology of \textit{Banach--Tate $\Z_p$-algebra} defined in \cite{Johansson-Newton}:

\begin{Definition}\label{Definition: Banach-Tate Z_p-algebra}
A $\Z_p$-algebra $R$ (with the structure morphism $\Z_p\rightarrow R$) is a \textbf{Banach--Tate $\Z_p$-algebra} if and only if it satisfies the following properties \begin{enumerate}
    \item $R$ is a completed normed ring with norm $|\cdot|_R$; 
    \item there exists a multiplicative pseudouniformiser $\varpi\in R$, i.e., $\varpi\in R^{\times}$ such that $|\varpi|_R<1$ and $|\varpi a|=|\varpi|_R|a|_R$ for any $a\in R$; and
    \item the structure morphism $\Z_p\rightarrow R$ is norm-decreasing where we equip $\Z_p$ with the usual norm $|x|=p^{-v_p(x)}$.
\end{enumerate}
\end{Definition}

Let $R$ be a complete Tate $\Z_p$-algebra, \textit{i.e.}, $R$ is a complete topological ring admitting a multiplicative pseudouniformiser $\varpi$. We assume that $R$ admits a noetherian ring of definition. Notice that $U_{\GSp_{2g}, 1}^{\opp}\simeq \Z_p^{d_0}$ as a $p$-adic manifold for some $d_0\in \Z_{>0}$, then we consider the continuous functions and distributions \begin{align*}
    & \Cont(U_{\GSp_{2g}, 1}^{\opp}, R):=\{f: U_{\GSp_{2g}, 1}^{\opp}\rightarrow R : f\text{ is continuous}\}\\
    & \Dist(U_{\GSp_{2g}, 1}^{\opp}, R):=\Hom_R^{\cts}(\Cont(U_{\GSp_{2g}, 1}^{\opp}, R), R).
\end{align*}\\

On the other hand, define $$\T_0:=\left\{(\bfgamma, \bfupsilon)\in \Iw_{\GL_g}^+\times M_g(p\Z_p): \trans\bfgamma\oneanti_g\bfupsilon= \trans\bfupsilon\oneanti_g\bfgamma\right\}.$$ The defining condition of $\T_0$ means that there exists $\bfalpha_b, \bfalpha_d\in M_g(\Z_p)$ such that $$\begin{pmatrix}\bfgamma & \bfalpha_b\\ \bfupsilon & \bfalpha_d\end{pmatrix}\in \GSp_{2g}(\Q_p)\cap M_{2g}(\Z_p).$$ In fact, $(\bfgamma, \bfupsilon)\in \T_0$ can be seen as an element in $\Iw_{\GSp_{2g}}^+$ via $$\T_0\ni (\bfgamma, \bfupsilon)\mapsto \begin{pmatrix}\bfgamma & \\ \bfupsilon & \oneanti_g \trans\bfgamma^{-1}\oneanti_g\end{pmatrix}\in \Iw_{\GSp_{2g}}^+.$$ Moreover, one can view $\T_0$ as a $p$-adic closed submanifold  of $\Iw_{\GL_g}^+\times M_g(p\Z_p)$. \\

There are two actions on $\T_0$: \begin{enumerate}
    \item The right action of $B_{\GL_g,0}^+ := T_{\GL_g, 0}U_{\GL_g, 1}$ on $\T_0$ is defined by 
    \[
    \T_0\times B_{\GL_g, 0}^+ \rightarrow \T_0, \quad ((\bfgamma, \bfupsilon), \bfbeta)\mapsto (\bfgamma\bfbeta, \bfupsilon\bfbeta).
    \] Indeed, by embedding $B_{\GL_g, 0}^+$ into $\Iw_{\GSp_{2g}}^+$ via $\bfbeta\mapsto \begin{pmatrix}\bfbeta & \\ & \oneanti_g \trans\bfbeta^{-1}\oneanti_g\end{pmatrix}$, this action is given by 
    \[
    \begin{pmatrix} \bfgamma & *\\ \bfupsilon & *\end{pmatrix}\begin{pmatrix}\bfbeta & \\ &  \oneanti_g \trans\bfbeta^{-1}\oneanti_g\end{pmatrix} = \begin{pmatrix} \bfgamma\bfbeta & * \\ \bfupsilon\bfbeta & *\end{pmatrix}.
    \]
    
    \item The left action of $\Xi : = \begin{pmatrix} \Iw_{\GL_g}^+ & M_g(\Z_p)\\ M_g(p\Z_p) & M_g(\Z_p)\end{pmatrix}\cap \GSp_{2g}(\Q_p)$ on $\T_0$ is defined by 
    \[
    \Xi \times \T_0 \rightarrow \T_0, \quad \left(\begin{pmatrix}\bfalpha_a & \bfalpha_b\\ \bfalpha_c & \bfalpha_d\end{pmatrix}, (\bfgamma, \bfupsilon)\right) \mapsto (\bfalpha_a\bfgamma + \bfalpha_b\bfupsilon, \bfalpha_c\bfgamma + \bfalpha_d\bfupsilon).
    \] To verify this is indeed a left action, one considers 
    \[
    \begin{pmatrix}\bfalpha_a & \bfalpha_b\\ \bfalpha_c & \bfalpha_d\end{pmatrix}\begin{pmatrix}\bfgamma & *\\ \bfupsilon & *\end{pmatrix} = \begin{pmatrix} \bfalpha_a\bfgamma + \bfalpha_b\bfupsilon & * \\ \bfalpha_c\bfgamma + \bfalpha_d\bfupsilon  & *\end{pmatrix}.
    \] In particular, $\T_0$ admits a left action of $\Iw_{\GSp_{2g}}^+$ as $\Iw_{\GSp_{2g}}^+\subset \Xi$.
\end{enumerate}

\vspace{3mm}

Inside $\T_0$, there is a special subset $$\T_{00} : = \{(\bfgamma, \bfupsilon)\in \T_0: \bfgamma \in U_{\GL_g, 1}^{\opp}\}.$$ One can identify $\T_{00}$ with $U_{\GSp_{2g}, 1}^{\opp}$ via \[
\T_{00} \xrightarrow{\sim} U_{\GSp_{2g}, 1}^{\opp}, \quad (\bfgamma, \bfupsilon)\mapsto \begin{pmatrix} \bfgamma \\ \bfupsilon & \oneanti_g \trans\bfgamma^{-1}\oneanti_g\end{pmatrix}.
\]\\

Let $\kappa: T_{\GL_g, 0}\rightarrow R^\times$ be a $p$-adic weight and we assume that one can choose a norm $|\cdot|_R$ on $R$ making $R$ a Banach--Tate $\Z_p$-algebra and that $|\cdot|_R$ is adapted to $\kappa$, \textit{i.e.}, the norm $|\cdot|_R$ satisfies \begin{enumerate}
    \item[$\bullet$] $\kappa(T_{\GL_g, 0})\subset R_0:=$ the unit ball of $R$ with respect to $|\cdot|_R$ and
    \item[$\bullet$] $|\kappa(\bftau)-1|_R<1$ for all $\bftau\in T_{\GL_g, 1}$. 
\end{enumerate} We write $r_{\kappa}:=\min\{r\in [1/p, 1): |\kappa(\bftau)-1|_R\leq r\text{ for all }\bftau\in T_{\GL_g, 1}\}$. Finally, define \[
\scalemath{1}{A_{\kappa}(\T_0, R):=\left\{f: \T_0\rightarrow R: \begin{array}{l}
    f \text{ is continuous }  \\
    f(\bfgamma\bfbeta, \bfupsilon\bfbeta)=\kappa(\bfbeta)f(\bfgamma, \bfupsilon)\quad \forall (\bfgamma, \bfupsilon)\in \T_0,\,\, \bfbeta\in B_{\GL_g, 0}^+
\end{array}\right\}.}
\]One sees immediately that there is an isomorphism $$A_{\kappa}(\T_0, R)\xrightarrow{ \sim} \Cont(U_{\GSp_{2g}, 1}^{\opp}, R), \quad f\mapsto f|_{\T_{00}}.$$ 

\begin{Remark}\label{Remark: comparison with J-N}
\normalfont Our continuous functions $A_{\kappa}(\T_0, R)$ is the same as the continuous functions ``$\calA_{\kappa}$'' defined in \cite{Johansson-Newton} in the case of $\GSp_{2g}$, for which we recall $$\calA_{\kappa}:=\left\{f: \Iw_{\GSp_{2g}}\rightarrow R: \begin{array}{l}
    f\text{ is continuous}   \\
    f(\bfgamma\bfbeta)=\kappa(\bfbeta)f(\bfgamma)\,\,\forall (\bfgamma, \bfbeta)\in \Iw_{\GSp_{2g}}\times B_{\GSp_{2g}, 0}  
\end{array}\right\}.$$ By the restriction to $U_{\GSp_{2g}, 1}^{\opp}$, we have an isomorphism $\calA_{\kappa}\simeq \Cont(U_{\GSp_{2g}, 1}^{\opp}, R)$ and hence $\calA_{\kappa}$ is isomorphic to $A_{\kappa}(\T_0, R)$. We chose to work in this way due to some technicality when defining the pairing in \S \ref{subsection: pairing on the analytic distributions}. We remark that the continuous functions $\calA_{\kappa}$ considered in \textit{op. cit.} has its advantage for considering analytic functions and distributions for general reductive groups. We should also point out that we are not considering general weights associated to $T_{\GSp_{2g}, 0}$ but weights associated to $T_{\GL_g, 0}$ via the embedding \[T_{\GL_g, 0}\hookrightarrow \begin{pmatrix}T_{\GL_g, 0} & \\ & \oneanti_g T_{\GL_g, 0}^{-1}\oneanti_g\end{pmatrix}\subset T_{\GSp_{2g}, 0}.\] This explains why we can use $\T_0$ to rewrite the continuous functions $\calA_{\kappa}$ considered in \textit{op. cit.}.
\end{Remark}

\vspace{3mm}

Evidently, we define $$D_{\kappa}(\T_0, R):=\Hom_R^{\cts}(A_{\kappa}(\T_0, R), R).$$ Then we have a sequence of isomorphisms $$R\llbrack U_{\GSp_{2g}, 1}^{\opp}\rrbrack\xrightarrow{ \sim }\Dist(U_{\GSp_{2g},1}^{\opp}, R)\xrightarrow{ \sim }D_{\kappa}(\T_0, R),$$ where the last isomorphism obviously follows from the isomorphism $A_{\kappa}(\T_0, R)\simeq \Cont(U_{\GSp_{2g}, 1}^{\opp}, R)$. The first isomorphism follows from \cite[Proposition 3.1.4]{Johansson-Newton} for which one sends each $\bfgamma\in U_{\GSp_{2g}, 1}^{\opp}$ to $$\delta_{\bfgamma}:=\text{ the Dirac distribution at $\bfgamma$},$$ \textit{i.e.}, the evaluation at $\bfgamma$. The ring structure on $\Dist(U_{\GSp_{2g}, 1}^{\opp}, R)$ is given by the usual convolution product; that is, $$\mu_1*\mu_2: f\mapsto \int_{\bfgamma_1\in U_{\GSp_{2g}, 1}^{\opp}}\int_{\bfgamma_2\in U_{\GSp_{2g}, 1}^{\opp}} f(\bfgamma_1\bfgamma_2)\quad \mu_2(\bfgamma_2)\mu_1(\bfgamma_1),$$ which yields $\delta_{\bfgamma_1}*\delta_{\bfgamma_2}=\delta_{\bfgamma_1\bfgamma_2}$.\\

Recall that $U_{\GSp_{2g}, 1}^{\opp}\simeq \Z_p^{d_0}$ as $p$-adic manifolds, thus we can fix topological generators $\bfupsilon^{\circ}_1,..., \bfupsilon^{\circ}_{d_0}$ for $U_{\GSp_{2g}, 1}^{\opp}$. For $i=(i_1, ..., i_{d_0})\in \Z_{\geq 0}^{d_0}$, we write $\underline{\bfupsilon}_{\circ}^i:=(\bfupsilon^{\circ}_1-1)^{i_1}\cdots (\bfupsilon^{\circ}_{d_0}-1)^{i_{d_0}}$. Let $r\in [r_{\kappa}, 1)$, we define the $r$-norm on $R\llbrack U_{\GSp_{2g}, 1}^{\opp}\rrbrack$ by $$\left|\left|\sum_{i\in \Z_{\geq 0}^{d_0}}a_i\underline{\bfupsilon}_{\circ}^i\right|\right|_r:=\sup\left\{|a_i|_R\cdot r^{\sum_{j=1}^{d_0}i_j}: i=(i_1, ..., i_{d_0})\in \Z_{\geq 0}^{d_0}\right\}.$$ Via the above isomorphisms, we defined an $r$-norm on $D_{\kappa}(\T_0, R)$. Following [\textit{op. cit.}, \S 3], we define \begin{align*}
    & D_{\kappa}^r(\T_0, R):=\text{ the completion of $D_{\kappa}(\T_0, R)$ with respect to the $r$-norm}\\
    & D_{\kappa}^\dagger(\T_0, R):=\varprojlim_{r} D_{\kappa}^r(\T_0, R).
\end{align*}

\begin{Remark}\label{Remark: for r<s, we have a compact inclusion of distribution spaces}
\normalfont By \cite[Lemma 3.2.3]{Johansson-Newton}, for any $r<s$, one has a compact inclusion $$D_{\kappa}^s(\T_0, R)\hookrightarrow D_{\kappa}^r(\T_0, R).$$ Hence one thinks of $D_{\kappa}^{\dagger}(\T_0, R)=\cap_{r}D_{\kappa}^r(\T_0, R)$.
\end{Remark}

\begin{Remark}\label{Remark: dual of the distribution space}
\normalfont Following \cite{Johansson-Newton}, we denote by $D_{\kappa}^{r, \circ}(\T_0, R)$ the unit ball of $D_{\kappa}^{r}(\T_0, R)$. Moreover, we also consider their dual spaces $$A_{\kappa}^{<r, \circ}(\T_0, R)\quad \text{ and }\quad A_{\kappa}^{<r}(\T_0, R),$$ which can be viewed as subspaces in $A_{\kappa}(\T_0, R)$. We refer the readers to the end of [\textit{op. cit.}, \S 3] for more detail discussions. 
\end{Remark}

\subsection{A pairing on the analytic distributions}\label{subsection: pairing on the analytic distributions}
In this subsection, we establish a pairing on the analytic distributions $D_{\kappa}^r(\T_0, R)$. Our strategy is the same as the strategy as in \cite[Chapter VIII]{Bellaiche-eigenbook}. That is, we first build a map from $D_{\kappa}^r(\T_0, R)$ to $A_{\kappa}^{<r}(\T_0, R)$ and then use the natural pairing between $D_{\kappa}^r(\T_0, R)$ and $A_{\kappa}^{<r}(\T_0, R)$ to obtain the desired one.

\paragraph{An algebraic model.} Our pairing is modelled on an algebraic version inspired by the one in \cite[pp. 18]{Hansen-PhD}, which we now explain. \\

Let $k=(k_1, ..., k_g)\in \Z_{>0}^g$ with $k_1\geq \cdots \geq k_g$. One can view $k$ as a character on $T_{\GL_g}$ via $$k: T_{\GSp_{2g}}\rightarrow \bbG_m, \quad \diag(\bftau_1, ..., \bftau_g, \bftau_0\bftau_g^{-1}, ..., \bftau_0\bftau_1^{-1})\mapsto \prod_{i=1}^g\bftau_i^{k_i}.$$ One extends $k$ to $B_{\GSp_{2g}}$ by setting $k(U_{\GSp_{2g}})=\{1\}$. Consider the irreducible representation for $\GSp_{2g}$
\[ \scalemath{1}{
\V_{\GSp_{2g}, k}^{\alg} :=\left\{\phi: \GSp_{2g}\rightarrow \bbA^1: \begin{array}{l}
    \phi\text{ is a morphism of schemes}  \\
    \phi(\bfgamma\bfbeta) = k(\bfbeta)\phi(\bfgamma) \text{ for any }(\bfgamma, \bfbeta)\in \GSp_{2g}\times B_{\GSp_{2g}} 
\end{array} \right\}.}
\] One can consider the following actions of $\GSp_{2g}$ on $\V_{\GSp_{2g}, k}^{\alg}$: \begin{enumerate}
    \item[(i)] The right action given by $$(\phi\cdot \bfgamma)(\bfgamma') = \phi(\bfgamma\bfgamma').$$
    \item[(ii)]  The left action given by $$(\bfgamma\cdot \phi)(\bfgamma') = \phi(\trans\bfgamma\bfgamma').$$
    \item[(iii)] The left action given by $$(\bfgamma\cdot\phi)(\bfgamma') = \phi(\bfgamma^{-1}\bfgamma').$$
\end{enumerate} Notice that the second action is valid since $\GSp_{2g}$ is stable under transpose. In fact, one deduces easily from the definition that \[\trans\bfgamma = \varsigma(\bfgamma) \begin{pmatrix} & -\oneanti_g\\ \oneanti_g\end{pmatrix}\bfgamma^{-1}\begin{pmatrix} & \oneanti_g\\ -\oneanti_g\end{pmatrix}\] for any $\bfgamma\in \GSp_{2g}$. Therefore, the second action is nothing but a twisted action of the third one. In what follows, we equip $\V_{\GSp_{2g}, k}^{\alg}$ with the left $\GSp_{2g}$-action given by (ii).\\

Let $\V_{\GSp_{2g}, k}^{\alg, \vee}$ be its linear dual. We equip $\V_{\GSp_{2g}, k}^{\alg, \vee}$ with a left action induced by (i). Then, we have a morphism 
\[
\Phi_{k}^{\alg}: \V_{\GSp_{2g}, k}^{\alg, \vee}\rightarrow  \V_{\GSp_{2g}, k}^{\alg}, \quad \mu\mapsto \left(\bfgamma'\mapsto \int_{\bfgamma\in \GSp_{2g}}e_{k}^{\hst}(\trans\bfgamma'\bfgamma)\quad d\mu\right),
\]
where $e_k^{\hst}\in \V_{\GSp_{2g}, k}^{\alg}$ is defined by $$e_k^{\hst} : (X_{ij})_{1\leq i,j\leq 2g}\mapsto X_{11}^{k_1-k_2} \times \det\begin{pmatrix}X_{11} & X_{12}\\ X_{21} & X_{22}\end{pmatrix}^{k_2-k_3} \times \cdots \times \det\begin{pmatrix}X_{11} & \cdots & X_{1g}\\ \vdots & & \vdots \\ X_{g1} & \cdots & X_{gg}\end{pmatrix}^{k_g}.$$ One sees that $\Phi_k^{\alg}$ is $\GSp_{2g}$-equivariant with respect to the left $\GSp_{2g}$-actions on both spaces. Indeed, for any $\bfalpha, \bfgamma'\in \GSp_{2g}$ and $\mu\in \V_{\GSp_{2g}, k}^{\alg, \vee}$, we have \begin{align*}
    \Phi_k^{\alg}(\bfalpha\cdot \mu)(\bfgamma') & = \int_{\bfgamma\in \GSp_{2g}} e_k^{\hst}(\trans\bfgamma' \bfalpha \bfgamma)\quad d\mu \\
    & = \int_{\bfgamma\in \GSp_{2g}} e_{k}^{\hst}(\trans(\trans\bfalpha\bfgamma')\bfgamma)\quad d\mu \\
    & = \left(\bfalpha\cdot \Phi_{k}^{\alg}(\mu)\right)(\bfgamma').
\end{align*} Consequently, $\Phi_{k}^{\alg}$ defines a pairing on $\V_{\GSp_{2g}, k}^{\alg, \vee}$ by
\[
(\mu_1,\mu_2)\mapsto \int_{\bfgamma_1, \bfgamma_2\in \GSp_{2g}}e_k^{\hst}\left(\trans\bfgamma_2\bfgamma_1\right)\quad d\mu_1(\bfgamma_1)d\mu_2(\bfgamma_2).
\]

\begin{Remark}\label{Remark: symplecti pairing}
\normalfont Notice that $\V_{\GSp_{2g}, k}^{\alg, \vee}$ is an irreducible representation of $\GSp_{2g}$, thus it admits a pairing induced by the symplectic pairing $\bla\cdot, \cdot\bra$ on $\V_{\Z}$. This pairing can be viewed by the following formula \begin{align*}
    \bla\cdot, \cdot\bra_{k}: (\mu_1, \mu_2)\mapsto \int_{\bfgamma_1, \bfgamma_2\in \GSp_{2g}}e_{k}^{\hst}\left(\trans\bfgamma_2\begin{pmatrix} & -\oneanti_g\\ \oneanti_g\end{pmatrix}\bfgamma_1\right)\quad d\mu_1(\bfgamma_1)d\mu_2(\bfgamma_2).
\end{align*} Indeed, for any $\bfalpha\in \GSp_{2g}$, we have \begin{align*}
    \bla\bfalpha\cdot \mu_1, \mu_2\bra_k & = \scalemath{1}{\int_{\bfgamma_1, \bfgamma_2\in \GSp_{2g}}e_k^{\hst}\left(\trans\bfgamma_2 \begin{pmatrix} & -\oneanti_g\\ \oneanti_g\end{pmatrix} \bfalpha \bfgamma_1\right) \quad d\mu_1(\bfgamma_1)d\mu_2(\bfgamma_2)}\\
    & = \scalemath{1}{\int_{\bfgamma_1, \bfgamma_2\in \GSp_{2g}}e_k^{\hst}\left(\trans\bfgamma_2 \varsigma(\bfalpha)\trans\bfalpha^{-1}\begin{pmatrix} & -\oneanti_g\\ \oneanti_g\end{pmatrix} \bfgamma_1\right) \quad d\mu_1(\bfgamma_1)d\mu_2(\bfgamma_2)}\\
    & = \scalemath{1}{\varsigma(\bfalpha)^{\sum k_i}\int_{\bfgamma_1, \bfgamma_2}e_{k}^{\hst}\left(\trans(\bfalpha^{-1}\bfgamma_2)\begin{pmatrix} & -\oneanti_g\\ \oneanti_g\end{pmatrix}\bfgamma_1\right) \quad d\mu_1(\bfgamma_1)d\mu_2(\bfgamma_2)}\\
    & = \varsigma(\bfalpha)^{\sum k_i}\bla \mu_1, \bfalpha^{-1}\cdot\mu_2\bra_k,
\end{align*}
 where the second equality follows from the definition of $\GSp_{2g}$.
\end{Remark}

\paragraph{The pairing on the analytic distributions.}  Let $\kappa: T_{\GL_g, 0}\rightarrow R^\times$ be a $p$-adic weight and we keep the assumption on the fixed norm on the Banach--Tate $\Z_p$-algebra $R$ as before. Notice that we can write $$\kappa: T_{\GL_g, 0}\rightarrow R^{\times}, \quad \diag(\bftau_1, ..., \bftau_g)\mapsto \kappa_1(\bftau_1) \times \cdots \times \kappa_g(\bftau_g)$$ for some $p$-adic weight $\kappa_i:\Z_p^\times\rightarrow R^\times$. \\

Define the function $e_{\kappa}^{\hst}$ on $\Iw_{\GL_g}^+$ by 
\[
e_{\kappa}^{\hst}: (X_{ij})_{1\leq i,j\leq g}\mapsto \frac{\kappa_1(X_{11})}{\kappa_{2}(X_{11})}\times \frac{\kappa_2(\det(X_{ij})_{1\leq i,j\leq 2})}{\kappa_3(\det(X_{ij})_{1\leq i,j\leq 2})}\times \cdots \times \kappa_g(\det(X_{ij})_{1\leq i,j\leq g}).
\] One sees that $e_{\kappa}^{\hst}\in A_{\kappa}(\T_0, R)$ via $$e_{\kappa}^{\hst}(\bfgamma, \bfupsilon) = e_{\kappa}^{\hst}(\bfgamma).$$

\begin{Lemma}\label{Lemma: highest weight vector converges in the right rigion}
For $r\in [r_{\kappa}, 1)$, $e_{\kappa}^{\hst}\in A_{\kappa}^{<r}(\T_0, R)$ 
\end{Lemma}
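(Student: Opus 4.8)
The plan is to show that the highest-weight function $e_\kappa^{\hst}$ lies in the smaller space $A_\kappa^{<r}(\T_0,R)$ — equivalently, that under the identification $A_\kappa(\T_0,R)\simeq \Cont(U_{\GSp_{2g},1}^{\opp},R)$ its restriction $e_\kappa^{\hst}|_{\T_{00}}$ pairs continuously against the completed distribution module $D_\kappa^{r,\circ}(\T_0,R)$, i.e. that the matrix coefficients of $e_\kappa^{\hst}|_{\T_{00}}$ against the monomial basis dual to $\underline{\bfupsilon}_\circ^{\,i}$ decay at a rate strictly better than $r^{\sum i_j}$ in norm. So the first step is to transport the question, via the isomorphism $R\llbrack U_{\GSp_{2g},1}^{\opp}\rrbrack \xrightarrow{\sim} D_\kappa(\T_0,R)$ and the definition of the $r$-norm, into a concrete estimate: writing $e_\kappa^{\hst}|_{\T_{00}}$ as a convergent series $\sum_i c_i \,\underline{x}^{\,i}$ in the coordinates $x$ on $U_{\GSp_{2g},1}^{\opp}\simeq\Z_p^{d_0}$, I need $|c_i|_R \cdot r^{-\sum_j i_j} \to 0$, in fact I would want to exhibit a single $s<r$ with $|c_i|_R \le C\, s^{\sum_j i_j}$ for all $i$ and some constant $C$. (This is exactly the characterization of $A_\kappa^{<r}$ recalled at the end of \cite[\S3]{Johansson-Newton} and in Remark~\ref{Remark: dual of the distribution space}.)

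Next I would make the function explicit. On $\T_{00}$ the $\GL_g$-part $\bfgamma$ ranges over $U_{\GL_g,1}^{\opp}$, so $\bfgamma = \one_g + p(\text{lower-triangular with entries in }\Z_p)$, and the principal minors $\det(X_{ij})_{1\le i,j\le m}$ appearing in the formula for $e_\kappa^{\hst}$ are each of the form $1 + p\cdot(\text{something in }\Z_p)$ — in particular they are $1$-units in $\Z_p$, and as one moves along a topological generator $\bfupsilon_\ell^\circ$ the relevant minor changes by $1 + p^k\cdot(\text{unit})$ for the appropriate $k\ge 1$. Then the key input is that each $\kappa_m$ is a $p$-adic weight adapted to $|\cdot|_R$: by definition $\kappa_m(T_{\GL_g,1})\subset R_0$ and $|\kappa_m(1+p t)-1|_R \le r_\kappa < 1$ for $t\in\Z_p$, which forces the Mahler/Amice expansion of $u\mapsto \kappa_m(1+pu)$ to have coefficients bounded by powers of $r_\kappa$ — this is precisely \cite[Lemma 3.2.3 / \S3]{Johansson-Newton}-type input, and it is where the hypothesis $r \ge r_\kappa$ and the freedom to pick $s\in[r_\kappa, r)$ enters. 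Multiplying together the $g$ (really $g$, with the telescoping ratios $\kappa_m/\kappa_{m+1}$) such analytic functions of the coordinates, and using that a product of functions each analytic on the radius-$s$ polydisc is again analytic there with the expected norm bound (submultiplicativity of $||\cdot||_s$, which holds since $||\cdot||_r$ is a Banach algebra norm on $R\llbrack U_{\GSp_{2g},1}^{\opp}\rrbrack$ — this is the multiplicative pseudouniformiser condition in Definition~\ref{Definition: Banach-Tate Z_p-algebra} at work), I get the desired bound $|c_i|_R \le C\, s^{\sum_j i_j}$ with $s<r$, hence $e_\kappa^{\hst}|_{\T_{00}} \in A_\kappa^{<r}(\T_0,R)$.

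The main obstacle, I expect, is bookkeeping rather than conceptual: one must check that the exponents of $p$ picked up from differencing the principal minors along the generators $\bfupsilon_\ell^\circ$ of $U_{\GSp_{2g},1}^{\opp}$ are uniformly $\ge 1$ (so that after composing with the $\kappa_m$ one genuinely lands in the radius-$r_\kappa$ convergence regime rather than only radius $1$), and that the change-of-variables between the "principal-minor" coordinates implicit in $e_\kappa^{\hst}$ and the chosen topological generators $\bfupsilon_1^\circ,\dots,\bfupsilon_{d_0}^\circ$ does not worsen the radius. A clean way to sidestep the messiest part of this is to argue more structurally: observe that $e_\kappa^{\hst}$ is, by construction, a uniform limit — with explicit norm control coming from the adaptedness of $|\cdot|_R$ to $\kappa$ — of the \emph{algebraic} highest-weight vectors $e_k^{\hst}$ (restricted to $\T_{00}$, where each $e_k^{\hst}$ is literally a polynomial, hence trivially in $A_\kappa^{<r}$), and that $A_\kappa^{<r}(\T_0,R)$ is closed in $A_\kappa(\T_0,R)$ for the $r$-adic topology; the estimate in the previous paragraph is exactly what makes this limit converge in $A_\kappa^{<r}$ for every $r\ge r_\kappa$. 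Either route gives the lemma; I would present the direct estimate as the primary argument and remark on the polynomial-approximation viewpoint as motivation.
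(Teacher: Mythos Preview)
Your proposal is not wrong, but it misses a one-line simplification that the paper exploits. Look again at what $e_\kappa^{\hst}(\bfgamma,\bfupsilon)=e_\kappa^{\hst}(\bfgamma)$ does when $(\bfgamma,\bfupsilon)\in\T_{00}$: here $\bfgamma\in U_{\GL_g,1}^{\opp}$ is lower \emph{unitriangular}, so every upper-left principal minor $\det(X_{ij})_{1\le i,j\le m}$ is exactly $1$, not merely a $1$-unit. Hence $e_\kappa^{\hst}|_{\T_{00}}$ is the constant function $1$, and the constant function $1$ lies in $A_\kappa^{<r}(\T_0,R)$ immediately from the explicit description in \cite[\S3.2--3.3]{Johansson-Newton}. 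This is the entire content of the paper's proof.

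All of your estimates on Mahler coefficients, the submultiplicativity of $\|\cdot\|_s$, the bookkeeping of exponents of $p$ along the generators $\bfupsilon_\ell^\circ$, and the polynomial-approximation fallback are therefore unnecessary: in the expansion $\sum_i c_i\,\underline{x}^{\,i}$ you set out to control, one has $c_0=1$ and $c_i=0$ for $i\neq 0$. Your argument would of course still succeed (the bounds you seek hold trivially for zero coefficients), but the write-up as it stands suggests you believe the restriction is a genuinely nontrivial analytic function, which it is not. The only thing to fix is to observe that the principal minors of a lower unitriangular matrix are identically $1$; once you see this the lemma is immediate.
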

\begin{proof}
Note that $e_{\kappa}^{\hst}(\bfgamma, \bfupsilon) = 1$ for any $(\bfgamma, \bfupsilon)\in \T_{00}$. The assertion then follows from the explicit description of $A_{\kappa}^{<r}(\T_0, R)$ in \cite[\S 3.2 \& 3.3]{Johansson-Newton}.
\end{proof}

Therefore, we can define
\begin{align*}
    \Phi_{\kappa}: D_{\kappa}^{r}(\T_0, R) & \rightarrow A_{\kappa}^{<r}(\T_0, R), \\
    \mu & \mapsto \scalemath{1}{\left((\bfgamma', \bfupsilon') \mapsto \int_{(\bfgamma, \bfupsilon)\in \T_{00}} e_{\kappa}^{\hst}\left(\begin{pmatrix}\trans\bfgamma' & \trans\bfupsilon'\end{pmatrix} \begin{pmatrix}\one_g & \\ & p^{-1}\one_g\end{pmatrix}\begin{pmatrix}\bfgamma\\ \bfupsilon\end{pmatrix}\right)\quad d\mu\right)}.
\end{align*}
Notice that \begin{align*}
    e_{\kappa}^{\hst}\left(\begin{pmatrix}\trans\bfgamma' & \trans\bfupsilon'\end{pmatrix} \begin{pmatrix}\one_g & \\ & p^{-1}\one_g\end{pmatrix}\begin{pmatrix}\bfgamma\\ \bfupsilon\end{pmatrix}\right) = e_{\kappa}^{\hst}(\trans\bfgamma'\bfgamma + \trans\bfupsilon'\bfupsilon/p)
\end{align*} is valid since $(\trans\bfgamma'\bfgamma + \trans\bfupsilon'\bfupsilon/p) \in \Iw_{\GL_g}^+$ (this is because $\Iw_{\GL_g}^+$ is stable under transpose and both $\bfupsilon$ and $\bfupsilon'$ are divisible by $p$).\\

Consequently, we have the pairing \[
    \bls\cdot, \cdot\brs_{\kappa}^{\circ}: D_{\kappa}^r(\T_0, R)\times D_{\kappa}^r(\T_0, R) \rightarrow R
\] given by the formula \[
    [\mu_1, \mu_2]_{\kappa}^{\circ} = \scalemath{1}{\int_{\T_{00}^2} e_{\kappa}^{\hst}\left(\begin{pmatrix}\trans\bfgamma_2 & \trans\bfupsilon_2\end{pmatrix} \begin{pmatrix}\one_g & \\ & p^{-1}\one_g\end{pmatrix}\begin{pmatrix}\bfgamma_1\\ \bfupsilon_1\end{pmatrix}\right)\quad d\mu_1(\bfgamma_1, \bfupsilon_1)d\mu_2(\bfgamma_2, \bfupsilon_2)}
\]

\begin{Proposition}\label{Proposition: main property of the pairing}
For any $\bfalpha = \begin{pmatrix} \bfalpha_a & \bfalpha_b\\ \bfalpha_c & \bfalpha_d\end{pmatrix} \in \Xi$, write $$\bfalpha^{\Shi} = \begin{pmatrix} \trans\bfalpha_a & \trans\bfalpha_c/p\\ p\trans\bfalpha_b & \trans\bfalpha_d\end{pmatrix}\in \Xi.$$ Then, for any $\mu_1, \mu_2\in D_{\kappa}^r(\T_0, R)$, we have $$\bls \bfalpha \cdot \mu_1, \mu_2\brs_{\kappa}^{\circ}  = \bls \mu_1, \bfalpha^{\Shi}\cdot \mu_2\brs_{\kappa}^{\circ}.$$
\end{Proposition}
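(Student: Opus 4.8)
The plan is to prove the identity by unwinding both sides to double integrals over $\T_{00}^2$ and tracking how the operation $\bfalpha\mapsto \bfalpha^{\Shi}$ interacts with the bilinear form $e_{\kappa}^{\hst}(\trans\bfgamma'\bfgamma + \trans\bfupsilon'\bfupsilon/p)$, exactly as in the algebraic model where one checks $\GSp_{2g}$-equivariance of $\Phi_k^{\alg}$ by a change of variables. First I would record the key bookkeeping device: define the block-diagonal matrix $J := \begin{pmatrix}\one_g & \\ & p^{-1}\one_g\end{pmatrix}$, so that the integrand of $\bls\mu_1,\mu_2\brs_\kappa^\circ$ is $e_\kappa^{\hst}$ applied to $\trans(J^{-1}\bfv_2)^{\mathtt t}\,$ — more precisely to $\trans\bfv_2\, J\, \bfv_1$ where $\bfv_i = \begin{pmatrix}\bfgamma_i\\ \bfupsilon_i\end{pmatrix}$ and $\trans\bfv_2$ here means the transpose of the $2g\times g$ matrix. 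The crucial algebraic fact to isolate and verify is that for $\bfalpha\in\Xi$ one has the identity $\trans(\bfalpha\bfv)\, J = \trans\bfv\,(J\,\bfalpha^{\Shi})$ as maps, equivalently $\trans\bfalpha\, J = J\,\bfalpha^{\Shi}$ up to the relevant bracket; writing $\bfalpha=\begin{pmatrix}\bfalpha_a&\bfalpha_b\\\bfalpha_c&\bfalpha_d\end{pmatrix}$ this is the elementary block computation $J^{-1}\trans\bfalpha\, J = \begin{pmatrix}\trans\bfalpha_a & p\trans\bfalpha_c\\ p^{-1}\trans\bfalpha_b & \trans\bfalpha_d\end{pmatrix}$, and comparing with the definition of $\bfalpha^{\Shi}$ shows these agree after conjugating by $J$ once more — this is precisely why the factors of $p^{\pm 1}$ are distributed the way they are in the definition of $\bfalpha^{\Shi}$. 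I would do this $2\times 2$-block verification carefully since it is the heart of the matter, and also check that $\bfalpha^{\Shi}\in\Xi$ so that the right-hand side is well-defined (this uses that $\Xi$ is defined by the asymmetric conditions $\begin{pmatrix}\Iw_{\GL_g}^+ & M_g(\Z_p)\\ M_g(p\Z_p)&M_g(\Z_p)\end{pmatrix}\cap\GSp_{2g}(\Q_p)$, and one checks the $\GSp_{2g}(\Q_p)$-membership via $\trans\bfalpha = \varsigma(\bfalpha)\begin{pmatrix}&-\oneanti_g\\\oneanti_g\end{pmatrix}\bfalpha^{-1}\begin{pmatrix}&\oneanti_g\\-\oneanti_g\end{pmatrix}$ from the algebraic model paragraph, which gives a closed formula for $\bfalpha^{\Shi}$ in terms of $\bfalpha^{-1}$).

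Granting the block identity, the proof proceeds as follows. Expand $\bls\bfalpha\cdot\mu_1,\mu_2\brs_\kappa^\circ = \int_{\T_{00}^2} e_\kappa^{\hst}\bigl(\trans\bfv_2\, J\, (\bfalpha\bfv_1)\bigr)\, d\mu_1\, d\mu_2$, where I am using that the left action of $\Xi$ on $\T_0$ is $\bfalpha\cdot(\bfgamma,\bfupsilon) = (\bfalpha_a\bfgamma+\bfalpha_b\bfupsilon, \bfalpha_c\bfgamma+\bfalpha_d\bfupsilon)$, i.e. $\bfv_1\mapsto\bfalpha\bfv_1$ in the column-vector notation. (One subtlety: after applying $\bfalpha$ the first component need no longer lie in $U_{\GL_g,1}^{\opp}$, so I should phrase things so that the integral is really over $\T_0$ via the identification $\T_{00}\simeq U_{\GSp_{2g},1}^{\opp}$ and the pushforward of $\mu_1$ under $\bfalpha$; this is exactly the same manoeuvre used implicitly in defining the action of $\Xi$ on $D_\kappa^r(\T_0,R)$, and the $A_\kappa$-invariance under $B_{\GL_g,0}^+$ together with Lemma \ref{Lemma: highest weight vector converges in the right rigion} ensures $e_\kappa^{\hst}$ still evaluates correctly.) Now substitute $\trans\bfv_2\, J\,\bfalpha\bfv_1 = \trans\bfv_2\,\trans\bfalpha^{\Shi}\!\cdot\! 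J\,\bfv_1 = \trans(\bfalpha^{\Shi}\bfv_2)\, J\,\bfv_1$ using the block identity (in the form $J\bfalpha = \trans{(\bfalpha^{\Shi})} J$), and recognise the right-hand side as the integrand of $\bls\mu_1, \bfalpha^{\Shi}\cdot\mu_2\brs_\kappa^\circ$ after swapping the roles of the two integration variables (which is legitimate since the double integral is iterated and everything converges in $D_\kappa^r$ by the continuity/boundedness of $\Phi_\kappa$ established above). This yields the claimed equality.

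The main obstacle I anticipate is not the formal manipulation but pinning down the exact form of the block identity so that the $p$-adic integrality constraints are respected throughout — that is, verifying that every intermediate matrix product (e.g. $\trans\bfgamma_2'\bfgamma_1 + \trans\bfupsilon_2'\bfupsilon_1/p$) genuinely lands in $\Iw_{\GL_g}^+$ where $e_\kappa^{\hst}$ is defined and continuous, rather than merely in $\GL_g(\Q_p)$. The paper already flags that $\bfupsilon,\bfupsilon'$ are divisible by $p$, which is what makes $\trans\bfupsilon'\bfupsilon/p$ integral; I would need to check the analogous point after acting by a general $\bfalpha\in\Xi$, where the entry $\bfalpha_c\in M_g(p\Z_p)$ is precisely what keeps the lower-left block of $\bfalpha\bfv_1$ in $M_g(p\Z_p)$, and the entry $p\trans\bfalpha_b$ in $\bfalpha^{\Shi}$ plays the symmetric role on the other side. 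A secondary technical point is justifying the interchange of the order of integration and the identification of pushforward measures, but this is routine given the completeness of $D_\kappa^r(\T_0,R)$ and the fact that Dirac distributions are dense, so it suffices to check the identity on $\mu_i = \delta_{\bfv_i}$, where it reduces entirely to the block matrix identity.
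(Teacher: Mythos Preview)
Your approach is exactly the paper's: the proof there is nothing more than the three-line block-matrix identity $J\bfalpha = \trans(\bfalpha^{\Shi})\,J$ with $J=\begin{pmatrix}\one_g & \\ & p^{-1}\one_g\end{pmatrix}$, which you correctly state and use in your second paragraph. One small slip to fix: in your exploratory computation you write $J^{-1}\trans\bfalpha\,J = \begin{pmatrix}\trans\bfalpha_a & p\trans\bfalpha_c\\ p^{-1}\trans\bfalpha_b & \trans\bfalpha_d\end{pmatrix}$, but in fact $J^{-1}\trans\bfalpha\,J = \begin{pmatrix}\trans\bfalpha_a & \trans\bfalpha_c/p\\ p\trans\bfalpha_b & \trans\bfalpha_d\end{pmatrix} = \bfalpha^{\Shi}$ directly, so no ``conjugating by $J$ once more'' is needed---this is harmless since your final substitution uses the correct form of the identity.
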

\begin{proof}
The assertion follows from the computation \begin{align*}
    \begin{pmatrix}\trans\bfgamma_2 & \trans\bfupsilon_2\end{pmatrix} &  \begin{pmatrix}\one_g & \\ & p^{-1}\one_g\end{pmatrix} \begin{pmatrix} \bfalpha_a & \bfalpha_b\\ \bfalpha_c & \bfalpha_d\end{pmatrix} \begin{pmatrix}\bfgamma_1\\ \bfupsilon_1\end{pmatrix} \\
    & = \begin{pmatrix}\trans\bfgamma_2 & \trans\bfupsilon_2\end{pmatrix} \begin{pmatrix} \bfalpha_a & p\bfalpha_b\\ \bfalpha_c/p & \bfalpha_d\end{pmatrix} \begin{pmatrix}\one_g & \\ & p^{-1}\one_g\end{pmatrix} \begin{pmatrix}\bfgamma_1\\ \bfupsilon_1\end{pmatrix} \\ 
    & = \trans\left(\begin{pmatrix} \trans\bfalpha_a & \trans\bfalpha_c/p\\ p\trans\bfalpha_b & \trans\bfalpha_d\end{pmatrix}\begin{pmatrix} \bfgamma_2 \\ \bfupsilon_2\end{pmatrix}\right) \begin{pmatrix}\one_g & \\ & p^{-1}\one_g\end{pmatrix} \begin{pmatrix}\bfgamma_1\\ \bfupsilon_1\end{pmatrix}.
\end{align*}
\end{proof}

\begin{Remark}\label{Remark: relation to g=1}
\normalfont When comparing with our algebraic model, one notices that the definition of the pairing $\bls\cdot, \cdot\brs_{\kappa}^{\circ}$ involves a ``\textit{normalisation}'' by $p^{-1}$. Such a normalisation is due to our model for $g=1$. More precisely, when $g=1$, elements in $\T_0$ can be written as $(1, pc) a $ for some $a\in \Z_p^\times$ and $c\in \Z_p$. Then, for any $\mu_1, \mu_2\in D_{\kappa}^r(\T_0, R)$, we have \begin{align*}
    \bls \mu_1, \mu_2\brs_{\kappa}^{\circ} = \int_{\T_{00}^2}\kappa(1+pc_1c_2) \quad d\mu_1(1, c_1)d\mu_2(1, c_2),
\end{align*} which then coincides with the interpretation in Hansen's unpublished notes \cite{Hansen-notes}. In particular, by applying \cite[Definition VIII.2.4]{Bellaiche-eigenbook}, we have the formula 
\[
\bls\mu_1, \mu_2\brs_{\kappa}^{\circ} = \sum_{i=0}^{\infty} p^i \left(\substack{\kappa\\ i}\right) \mu_1(c_1^i)\mu_2(c_2^i),
\] which is (almost) the same formula given by [\textit{op. cit.}, (VIII.2.4)]. Here, for $j=1, 2$, we view $c_j^i$ as a function on $\T_0$ via $$c_j^i: \T_0\ni (a, pc)\mapsto \kappa(a)(c/a)^i.$$
\end{Remark}

\begin{Remark}
\normalfont Following Remark \ref{Remark: symplecti pairing}, for any dominant $k\in \Z_{>0}^g$, we may consider the pairing $\bls\cdot, \cdot\brs_{k}^{\circ}$ to be the twist of $\bla\cdot, \cdot\bra_k$ by an \textit{Atkin--Lehner operator}. More precisely, let $$\w_p := \begin{pmatrix} & -p^{-1}\oneanti_g\\ \oneanti_g\end{pmatrix},$$ then \begin{align*}
    \bls\mu_1, \mu_2\brs_{k}^{\circ} & = \scalemath{1}{\int_{\T_{00}^2}e_{k}^{\hst}\left(\begin{pmatrix}\trans\bfgamma_2 & \trans\bfupsilon_2\end{pmatrix}\begin{pmatrix}\one_g\\ & p^{-1}\one_g\end{pmatrix}\begin{pmatrix}\bfgamma_1\\ \bfupsilon_1\end{pmatrix}\right) \quad d\mu_1(\bfgamma_1, \bfupsilon_1)d\mu_2(\bfgamma_2, \bfupsilon_2)}\\
    & = \scalemath{1}{\int_{\T_{00}^2}e_{k}^{\hst}\left( \begin{pmatrix}\trans\bfgamma_2 & \trans\bfupsilon_2\end{pmatrix}\begin{pmatrix}& \oneanti_g\\ -p^{-1}\oneanti_g\end{pmatrix}\begin{pmatrix} & -\oneanti_g\\ \oneanti_g\end{pmatrix}\begin{pmatrix}\bfgamma_1\\ \bfupsilon_1\end{pmatrix}\right)  \quad d\mu_1(\bfgamma_1, \bfupsilon_1)d\mu_2(\bfgamma_2, \bfupsilon_2)}\\
    & = \scalemath{1}{\int_{\T_{00}^2}e_{k}^{\hst}\left(\trans\left(\w_p\begin{pmatrix}\bfgamma_2\\ \bfupsilon_2\end{pmatrix}\right)\begin{pmatrix} & -\oneanti_g\\ \oneanti_g\end{pmatrix}\begin{pmatrix}\bfgamma_1\\ \bfupsilon_1\end{pmatrix}\right)  \quad d\mu_1(\bfgamma_1, \bfupsilon_1)d\mu_2(\bfgamma_2, \bfupsilon_2).}
\end{align*} In particular, this viewpoint coincides with the perspectives in \cite{Kim, Bellaiche-eigenbook, Hansen-notes} when $g=1$.
\end{Remark}

\section{The overconvergent cohomology and the eigenvariety}
\subsection{A pairing on cohomology groups}
Let $N\in \Z_{>3}$ such that $p\nmid N$ and we fix an $N$-th primitive root of unity $\zeta_N$ (and so we fixed an isomorphism $\mu_N\simeq \Z/N\Z$). Equip on $(\Z/N\Z)^{2g}$ a symplectic pairing induced by the pairing on $\V_{\Z}$ in the previous section. Let $\Sch_{\Z_p[\zeta_N]}$ be the category of locally noetherian schemes over $\Z_p[\zeta_N]$, then the moduli problem \begin{align*}
    \Sch_{\Z_p[\zeta_N]} & \rightarrow \Sets,\\
    S & \mapsto \scalemath{1}{\left\{(A_{/S}, \lambda, \alpha_N): \begin{array}{l}
    A_{/S}\text{ is a principally polarised abelian scheme over }S  \\
    \lambda: A\rightarrow A^{\vee}\text{ is the principal polarisation}\\
    \alpha_N: A[N]\xrightarrow{\sim}(\Z/N\Z)^{2g}\text{ is a symplectic isomorphism}
\end{array}\right\}/\simeq}
\end{align*} is representable by a scheme $X_{\Z_p[\zeta_N]}$, where the symplectic isomorphism $\alpha_N$ is taken with respect to the Weil pairing on $A[N]$ and the pairing induced by $\bla \cdot, \cdot\bra$ on $(\Z/N\Z)^{2g}$. We let $$X=X_{\C_p}:= X_{\Z_p[\zeta_N]}\times_{\Z_p[\zeta_N]} \Spec \C_p.$$\\

Let $\zeta_p$ be a primitive $p$-th root of unity. We also consider the scheme $X_{\Iw^+, \Q_p[\zeta_p, \zeta_N]}$, paramatrising the isomorphism classes of tuples $(A, \lambda, \alpha_N, \Fil_{\bullet}A[p], \{C_i: i=1, ..., g\})$, where \begin{enumerate}
    \item[$\bullet$]  $(A, \lambda, \alpha_N)\in X_{\Q_p[\zeta_p, \zeta_N]} = X_{\Z_p[\zeta_N]}\times_{\Z_p[\zeta_N]}\Spec\Q_p[\zeta_p, \zeta_N]$,
    \item[$\bullet$] $\Fil_{\bullet}A[p]$ is a full flag of $A[p]$ such that \[
        (\Fil_{\bullet}A[p])^{\perp} \simeq \Fil_{2g-\bullet}A[p]
    \] with respect to the Weil pairing, and
    \item[$\bullet$] $\{C_i: i=1, ..., g\}$ is a collection of subgroups of $A[p]$ of order $p$ such that $\Fil_i A[p] = \langle C_1, ..., C_i\rangle$ for $i=1, ..., g$.
\end{enumerate} Again, we write $X_{\Iw^+}:=X_{\Iw^+, \Q_p[\zeta_p, \zeta_N]}\times_{\Q_p[\zeta_p, \zeta_N]}\Spec \C_p$. Obviously, we have a natural forgetful map $$\pi_{\Iw^+}: X_{\Iw^+}\rightarrow X, \quad (A, \lambda, \alpha_N, \Fil_{\bullet}A[p], \{C_i:i=1, ..., g\})\mapsto (A, \lambda, \alpha_N).$$ \\

Via the fixed (algebraic) isomorphism $\C_p\simeq \C$, we consider the locally symmetric space $X_{\Iw^+}(\C)$ in the rest of this article, which admits an alternative description $$X_{\Iw^+}(\C)=\GSp_{2g}(\Q)\backslash \GSp_{2g}(\A_f)\times \bbH_g/\Iw_{\GSp_{2g}}^+\Gamma(N),$$ where \begin{enumerate}
    \item[$\bullet$] $\A_f$ is the ring of finite ad\`{e}les of $\Q$, 
    \item[$\bullet$] $\bbH_g$ is the disjoint union of the Siegel upper- and lower-half spaces of genus $g$, 
    \item[$\bullet$] $\Gamma(N):=\{\bfgamma\in \GSp_{2g}(\widehat{\Z}): \bfgamma\equiv \one_{2g}\mod N\}$. 
    %\item[$\bullet$] $\SpO_{2g}(\R):=\{\bfgamma\in \Sp_{2g}(\R): \trans\bfgamma=\bfgamma^{-1}\}$.
\end{enumerate}

\vspace{3mm}

Fix a $p$-adic weight $\kappa: T_{\GL_g, 0}\rightarrow R^{\times}$ satisfying the assumptions on the Banach--Tate $\Z_p$-algebra norm $|\cdot|_R$ on $R$ together with a fixed  multiplicative pseudouniformiser $\varpi\in R$. Recall the analytic distributions $D_{\kappa}^r(\T_0, R)$ that we introduced in the previous section. From now on, we simplify the notation by writing $$D_{\kappa}^r = D_{\kappa}^r(\T_0, R).$$\\

Since $D_{\kappa}^r$ admits a left $\Iw_{\GSp_{2g}}^+$-action, we can follow the strategy in \cite{Hansen-PhD} to compute the Betti cohomology groups $H^t(X_{\Iw^+}(\C), D_{\kappa}^{r})$. That is, we consider the so-called \textit{Borel--Serre cochain complex} $C^{\bullet}(\Iw_{\GSp_{2g}}^+, D_{\kappa}^r)$, constructed by fixing a triangulation on the Borel--Serre compactification $\overline{X}_{\Iw^+}^{\BS}(\C)$ of the locally symmetric space $X_{\Iw^+}(\C)$ (see \cite{Borel-Serre}). The Borel--Serre cochain complex admits the following nice properties (see also \cite[\S 2.1]{Hansen-PhD}):\begin{enumerate}
    \item There is a homotopy between the singular cochain complex and the Borel--Serre cochain complex and hence the reason why one can compute the cohomology groups by considering the Borel--Serre cochain complex. 
    \item The total space $C_{\kappa, r}^{\tol}:=\oplus_{t}C^t(\Iw_{\GSp_{2g}}^+, D_{\kappa}^r)$ is a potentially ON-able Banach module over $R$ since $C^{\bullet}(\Iw_{\GSp_{2g}}^+, D_{\kappa}^r)$ is a finite cochain complex and $D_{\kappa}^r(\T_0, R)$ is potentially ON-able with an explicit potential ON-basis described in \cite[\S 3.2]{Johansson-Newton}. 
\end{enumerate}

The fixed triangulation on $\overline{X}_{\Iw^+}^{\BS}(\C)$ provides also a triangulation on the boundary $\partial \overline{X}_{\Iw^+}^{\BS}(\C):=\overline{X}_{\Iw^+}^{\BS}(\C)\smallsetminus X_{\Iw^+}(\C)$ and hence defines a cochain complex $C^{\bullet}_{\partial}(\Iw_{\GSp_{2g}}^+, D_{\kappa}^r)$ that computes the cohomology groups at the boundary. The natural closed embedding $\partial\overline{X}_{\Iw^+}^{\BS}(\C)\hookrightarrow \overline{X}^{\BS}_{\Iw^+}(\C)$ then induces a morphism of cochain complexes $$\pi: C^{\bullet}(\Iw_{\GSp_{2g}}^+, D_{\kappa}^r)\rightarrow C^{\bullet}_{\partial}(\Iw_{\GSp_{2g}}^+, D_{\kappa}^r).$$ Following \cite[\S 3.1.3]{Barrera}, we define $C_c^{\bullet}(\Iw_{\GSp_{2g}}^+, D_{\kappa}^r):=\Cone(\pi)$ the mapping cone of $\pi$, \textit{i.e.}, \begin{align*}
    & \Cone(\pi)^{t}=C^t(\Iw_{\GSp_{2g}}^+, D_{\kappa}^r)\oplus C_{\partial}^{t-1}(\Iw_{\GSp_{2g}}^+, D_{\kappa}^r)\text{ with }\\
    & d_c^t: \Cone(\pi)^t\rightarrow \Cone(\pi)^{t+1},\quad (\sigma, \sigma_{\partial})\mapsto (-d^t\sigma, -\pi^i\sigma+d_{\partial}^{t-1}\sigma_{\partial}),
\end{align*} where $d$ and $d_{\partial}$ are differentials on $C^{\bullet}(\Iw_{\GSp_{2g}}^+, D_{\kappa}^r)$ and $C^{\bullet}(\Iw_{\GSp_{2g}}^+, D_{\kappa}^r)$ respectively. The strategy of the proof of \cite[Proposition 3.5]{Barrera} applies here and one sees that $C_c^{\bullet}(\Iw_{\GSp_{2g}}^+, D_{\kappa}^r)$ computes the compactly supported cohomology groups $H^t_c(X_{\Iw^+}(\C), D_{\kappa}^r)$. Moreover, the natural morphism \[
    C_c^{\bullet}(\Iw_{\GSp_{2g}}^+, D_{\kappa}^r)\rightarrow C^{\bullet}(\Iw_{\GSp_{2g}}^+, D_{\kappa}^r)
\] induces a morphism on the cohomology groups $$H^t_c(X_{\Iw^+}(\C), D_{\kappa}^r)\rightarrow H^t(X_{\Iw^+}(\C), D_{\kappa}^r).$$ For each $t$, we let $$H_{\Par}^t(X_{\Iw^+}(\C), D_{\kappa}^r):=\image\left(H^t_c(X_{\Iw^+}(\C), D_{\kappa}^r)\rightarrow H^t(X_{\Iw^+}(\C), D_{\kappa}^r)\right),$$ and call them the parabolic cohomology groups. 

\begin{Proposition}\label{Proposition: well-defined pairing on the parabolic cohomology}
Let $n_0=g(g+1)/2$ be the $\C$-dimension of $X_{\Iw^+}(\C)$. Then, we have a well-defined pairing 
\[
\bls\cdot, \cdot\brs_{\kappa}: H^t_{\Par}(X_{\Iw^+}(\C), D_{\kappa}^r)\times H_{\Par}^{2n_0-t}(X_{\Iw^+}(\C), D_{\kappa}^r)\rightarrow R
\] for any $0\leq t\leq 2n_0$.
\end{Proposition}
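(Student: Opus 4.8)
Here is a plan for proving Proposition~\ref{Proposition: well-defined pairing on the parabolic cohomology}.

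\medskip

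\noindent\emph{Overall approach.} The plan is to realise $\bls\cdot,\cdot\brs_\kappa$ as a Poincar\'e--Lefschetz duality pairing, built from three ingredients: the coefficient pairing $\bls\cdot,\cdot\brs_\kappa^\circ$ on $D_\kappa^r$ from \S\ref{subsection: pairing on the analytic distributions}, the simplicial cup product on the Borel--Serre cochain complexes, and the fundamental class of $\overline X_{\Iw^+}^{\BS}(\C)$. Since $N>3$, the arithmetic groups cutting out $X_{\Iw^+}(\C)$ are torsion-free and act by biholomorphisms on the Siegel half-spaces, so $X_{\Iw^+}(\C)$ is a complex manifold and $\overline X_{\Iw^+}^{\BS}(\C)$ is a compact oriented manifold with boundary of real dimension $2n_0$. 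Fixing an orientation gives a fundamental class, hence a trace map $\int\colon H^{2n_0}_c(X_{\Iw^+}(\C),D_\kappa^r\text{-coeff. part})\to R$; here one uses that $C^\bullet(\Iw_{\GSp_{2g}}^+,D_\kappa^r)$ and $C_c^\bullet(\Iw_{\GSp_{2g}}^+,D_\kappa^r)=\Cone(\pi)$ compute $H^\bullet$ and $H_c^\bullet$ of $X_{\Iw^+}(\C)$ with coefficients in $D_\kappa^r$ (see \cite[\S 2.1]{Hansen-PhD}, \cite{Barrera}).

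\medskip

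\noindent\emph{Repackaging the coefficient pairing.} The one genuinely non-formal point is that, by Proposition~\ref{Proposition: main property of the pairing}, $\bls\cdot,\cdot\brs_\kappa^\circ$ is \emph{not} invariant under the diagonal $\Iw_{\GSp_{2g}}^+$-action, but only satisfies $\bls\bfalpha\mu_1,\mu_2\brs_\kappa^\circ=\bls\mu_1,\bfalpha^{\Shi}\mu_2\brs_\kappa^\circ$, where $\bfalpha\mapsto\bfalpha^{\Shi}=\trans(s\bfalpha s^{-1})$ with $s=\left(\begin{smallmatrix}\one_g&\\&p^{-1}\one_g\end{smallmatrix}\right)$ is an anti-involution of $\Iw_{\GSp_{2g}}^+$ equal to conjugate-inverse by the Atkin--Lehner element $\w_p$ up to the similitude character. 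I would first observe that $\w_p$ normalises the level group $\Iw_{\GSp_{2g}}^+\Gamma(N)$ --- the key point being that the top-left and bottom-right blocks of elements of $\Iw_{\GSp_{2g}}^+$ are diagonal modulo $p$, a condition preserved by conjugation by $\w_p$ --- so that right translation by $\w_p$ at $p$ defines an Atkin--Lehner automorphism $\iota$ of $X_{\Iw^+}(\C)$. Writing $D_\kappa^{r,\Shi}$ for $D_\kappa^r$ with the $\Iw_{\GSp_{2g}}^+$-action twisted through $\bfalpha\mapsto(\bfalpha^{\Shi})^{-1}$, Proposition~\ref{Proposition: main property of the pairing} says precisely that $\bls\cdot,\cdot\brs_\kappa^\circ$ is $\Iw_{\GSp_{2g}}^+$-equivariant as a map $D_\kappa^r\otimes_R D_\kappa^{r,\Shi}\to R$ with $R$ trivial; the corresponding local system on $X_{\Iw^+}(\C)$ is identified with $\iota^*\calD_\kappa^r$, the residual similitude character being harmless since it is trivial on the relevant arithmetic subgroups. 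Composing with the isomorphism $\iota^*\colon H^\bullet(X_{\Iw^+}(\C),D_\kappa^r)\xrightarrow{\sim}H^\bullet(X_{\Iw^+}(\C),D_\kappa^{r,\Shi})$ then lets me treat $\bls\cdot,\cdot\brs_\kappa^\circ$ as a self-pairing of coefficient systems.

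\medskip

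\noindent\emph{Cup product and descent to parabolic cohomology.} The fixed triangulation of $\overline X_{\Iw^+}^{\BS}(\C)$ carries a cup product $C^\bullet\otimes_R C^\bullet\to C^\bullet$ which restricts to $C_c^\bullet\otimes_R C^\bullet\to C_c^\bullet$ and $C^\bullet\otimes_R C_c^\bullet\to C_c^\bullet$, compatibly with $C_c^\bullet\to C^\bullet$. Feeding in the coefficient pairing of the previous paragraph and the trace map, one gets two pairings
\[
H^t_c\times H^{2n_0-t}\longrightarrow R\qquad\text{and}\qquad H^t\times H^{2n_0-t}_c\longrightarrow R
\]
(cohomology of $X_{\Iw^+}(\C)$ with coefficients in $D_\kappa^r$), which agree on classes coming from $H_c^\bullet$, both being induced by the single cup product on $\overline X_{\Iw^+}^{\BS}(\C)$. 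For $\bar x\in H^t_{\Par}$ and $\bar y\in H^{2n_0-t}_{\Par}$ I then choose lifts $x\in H^t_c$, $y\in H^{2n_0-t}_c$ and set $\bls\bar x,\bar y\brs_\kappa:=\int(x\cup y)$. Replacing $x$ by $x+z$ with $z\in\ker(H^t_c\to H^t)$ changes the value by $\int(z\cup y)$, which computed via the second pairing equals $(\text{image of }z)\cup y=0$; symmetrically in $y$. Hence $\bls\bar x,\bar y\brs_\kappa$ depends only on $\bar x,\bar y$, is $R$-bilinear, and the proposition follows.

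\medskip

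\noindent\emph{Where the difficulty lies.} Almost all of the content is in the repackaging step: one must check that $\iota$ is well defined on the Borel--Serre cochain complex together with its boundary subcomplex (so that it acts on $\Cone(\pi)$, hence on $H^\bullet_{\Par}$), and that under $\iota^*$ the coefficient pairing really becomes the equivariant self-pairing used above --- i.e. pinning down the precise interplay between the $\Shi$-twist of Proposition~\ref{Proposition: main property of the pairing} and the Atkin--Lehner involution at $p$, including the harmlessness of the similitude character. The remaining steps are the standard cup-product-and-Poincar\'e-duality formalism, exactly as in \cite[Chapter VIII]{Bellaiche-eigenbook} and \cite{Hansen-PhD}.
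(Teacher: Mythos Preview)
Your overall architecture --- cup product on Borel--Serre cochains, coefficient pairing $\bls\cdot,\cdot\brs_\kappa^\circ$, trace via the fundamental class, then descent to $H^\bullet_{\Par}$ using the compatibility of the three cup products $H_c\times H\to H_c$, $H_c\times H_c\to H_c$, $H\times H_c\to H_c$ --- is exactly the paper's proof. The paper packages the descent step as a commutative diagram (citing Munkres for the cup-product compatibility) and defines $\bls[\mu_1],[\mu_2]\brs_\kappa$ by choosing lifts $[\mu_i']\in H_c$ and checking independence of the lift, precisely as you do.

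Where you diverge is in your ``repackaging'' step. The paper's written proof simply asserts that $\bls\cdot,\cdot\brs_\kappa^\circ$ induces a map $H^{2n_0}_c(X_{\Iw^+}(\C),D_\kappa^r\widehat\otimes_R D_\kappa^r)\to H^{2n_0}_c(X_{\Iw^+}(\C),R)$ and proceeds; it does not discuss the equivariance of the coefficient pairing at all, nor does it introduce any Atkin--Lehner involution $\iota$ on cohomology. You, by contrast, flag the $\Shi$-twist of Proposition~\ref{Proposition: main property of the pairing} as the genuine obstruction and resolve it by identifying the twisted local system with $\iota^*\calD_\kappa^r$ and composing with $\iota^*$. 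This is the approach of \cite[Chapter VIII]{Bellaiche-eigenbook} in the $\GL_2$ case, and it is a legitimate and more transparent route; in particular it makes explicit why the choice of the strict Iwahori level $\Iw_{\GSp_{2g}}^+$ (which the paper says in the introduction was made ``due to an issue when constructing the pairing'') matters, namely so that $\w_p$ normalises the level. What you gain is a cleaner justification of why the pushforward along $\bls\cdot,\cdot\brs_\kappa^\circ$ is defined on cohomology; what the paper's terse version gains is brevity, at the cost of leaving this point to the reader. Your caution about checking that $\iota$ is compatible with the Borel--Serre boundary (hence with $\Cone(\pi)$) is well placed and is indeed the only point requiring care.
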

\begin{proof}
Recall the pairing $\bls\cdot, \cdot\brs_{\kappa}^{\circ}$ defined in \S \ref{subsection: pairing on the analytic distributions}. Together with the cup product on cohomology groups, one obtains a pairing $\bls\cdot, \cdot\brs_{\kappa}^*$ defined as the composition 
\[
\begin{tikzcd}
    \scalemath{1}{H^t_c(X_{\Iw^+}(\C), D_{\kappa}^r) \times H^{2n_0-t}(X_{\Iw^+}(\C), D_{\kappa}^r)} \arrow[r, "\smile"]\arrow[rdd, bend right = 15,  "\text{$\bls\cdot, \cdot\brs_{\kappa}^*$}"'] &  \scalemath{1}{H^{2n_0}_{c}(X_{\Iw^+}(\C), D_{\kappa}^r\widehat{\otimes}_R D_{\kappa}^r)}\arrow[d, "\text{$\bls\cdot, \cdot\brs_{\kappa}^{\circ}$}"]\\ & \scalemath{1}{H_c^{2n_0}(X_{\Iw^+}(\C), R)}\arrow[d, "\simeq"] \\ & R,
\end{tikzcd}
\] where ``$\smile$'' denotes the cup product. \\

The compatibility of cup products (see, for example, \cite[Chapter 5, \S 48, Exercise 2]{Munkers84}) yields the commutative diagram $$\begin{tikzcd}[column sep = small]
H_c^t(X_{\Iw^+}(\C), D_{\kappa}^r)\times H^{2n_0-t}(X_{\Iw^+}(\C), D_{\kappa}^r)\arrow[r, "\smile"] & H_c^{2n_0}(X_{\Iw^+}(\C), D_{\kappa}^r\widehat{\otimes}_R D_{\kappa}^r)\arrow[d, equal]\\
H_c^{t}(X_{\Iw^+}(\C), D_{\kappa}^r)\times H_c^{2n_0-t}(X_{\Iw^+}(\C), D_{\kappa}^r)\arrow[r, "\smile"]\arrow[u, shift left = 15, equal]\arrow[u, shift right = 15]\arrow[d, shift right = 15]\arrow[d, shift left = 15, equal] & H_c^{2n_0}(X_{\Iw^+}(\C), D_{\kappa}^r\widehat{\otimes}_R D_{\kappa}^r)\arrow[d, equal]\\
H^t(X_{\Iw^+}(\C), D_{\kappa}^r)\times H_c^{2n_0-t}(X_{\Iw^+}(\C), D_{\kappa}^r)\arrow[r, "\smile"] & H_c^{2n_0}(X_{\Iw^+}(\C), D_{\kappa}^r\widehat{\otimes}_R D_{\kappa}^r)
\end{tikzcd}.$$ In particular, if $[\mu_1]\in H_{\Par}^t(X_{\Iw^+}(\C), D_{\kappa}^r)$ and $[\mu_2]\in H_{\Par}^{2n_0-t}(X_{\Iw^+}(\C), D_{\kappa}^r)$ with $[\mu_1']\in H_c^t(X_{\Iw^+}(\C), D_{\kappa}^r)$ and $[\mu_2']\in H_c^{2n_0-t}(X_{\Iw^+}(\C), D_{\kappa}^r)$ such that $[\mu_i']\mapsto [\mu_i]$ for $i=1, 2$, then $$[\mu_1]\smile [\mu_2']=[\mu_1']\smile[\mu_2']=[\mu_1']\smile [\mu_2].$$ Hence we define $$\bls [\mu_1], [\mu_2]\brs_{\kappa}=\bls [\mu_1'], [\mu_2]\brs_{\kappa}^*=\bls [\mu_1], [\mu_2']\brs_{\kappa}^*.$$ We see that $\bls\cdot, \cdot\brs_{\kappa}$ is well-defined, \textit{i.e.}, independent of the choice of the lifting, due to the commutativity of the above diagram.
\end{proof}

\subsection{Hecke operators}

\paragraph{Hecke operators outside $pN$.} Let $q$ be a prime number not dividing $pN$. We consider the set of double cosets $$\Upsilon_q:=\{[\GSp_{2g}(\Z_q)\bfdelta\GSp_{2g}(\Z_q)]: \bfdelta\in \GSp_{2g}(\Q_q)\cap M_{2g}(\Z_q)\}.$$ For any fixed $\bfdelta$, we have the coset decomposition $$\GSp_{2g}(\Z_q)\bfdelta\GSp_{2g}(\Z_q)=\sqcup_{j} \bfdelta_j\GSp_{2g}(\Z_p)$$ for finitely many $\bfdelta_j\in \GSp_{2g}(\Q_q)\cap M_{2g}(\Z_q)$. By letting $\bfdelta_j$'s act trivially on $D_{\kappa}^r$, we have a left action of the double coset $[\GSp_{2g}(\Z_q)\bfdelta\GSp_{2g}(\Z_q)]$ on $C^{\bullet}(\Iw_{\GSp_{2g}}^+, D_{\kappa}^r)$ by $$[\GSp_{2g}(\Z_q)\bfdelta\GSp_{2g}(\Z_q)]\cdot \sigma = \sum_{j}\bfdelta_j\cdot \sigma$$ for any $\sigma\in C^{\bullet}(\Iw_{\GSp_{2g}}^+, D_{\kappa}^r)$. Then the Hecke algebra at $q$ (over $\Z_p$) is defined to be $\bbT_q=\bbT_{q, \Z_p}=\Z_p[\Upsilon_q]$.

\paragraph{Hecke operators at $N$.} We ignore the Hecke actions at $N$, \textit{i.e.}, for $\ell|N$, we only consider the trivial action and hence the Hecke algebra at $\ell$ is $\bbT_{\ell}=\bbT_{\ell, \Z_p}:=\Z_p$.

\paragraph{Hecke operator at $p$.} Let \begin{align*}
    & \bfu_{p,0}:=\begin{pmatrix}\one_g\\ & p\one_g\end{pmatrix}\\
    & \bfu_{p, i}:=\begin{pmatrix} \one_{g-i} \\ & p\one_{i}\\ & & p\one_{i}\\ & & & p^2\one_{g-i}\end{pmatrix}\in T_{\GSp_{2g}}(\Q_p)\cap M_{2g}(\Z_p)\text{ for }1\leq i\leq g-1\\
    & \bfu_p:= \prod_{i=0}^{g-1}\bfu_{p, i}%=\begin{pmatrix}\diag(1, p, ..., p^{g-1})\\ & \diag(p^{2g-1}, p^{2g-2}, ..., p^{g})\end{pmatrix}
\end{align*} and consider the set of double cosets $$\Upsilon_p:=\{[\Iw_{\GSp_{2g}}^+\bfu_{p, i}\Iw_{\GSp_{2g}}^+]: i=0, ..., g-1\}.$$ We immediately see that $[\Iw_{\GSp_{2g}}^+\bfu_p\Iw_{\GSp_{2g}}^+]=\prod_{i=0}^{g-1}[\Iw_{\GSp_{2g}}^+\bfu_{p, i}\Iw_{\GSp_{2g}}^+]$. A direct computation shows that the coset decomposition of $\Iw_{\GSp_{2g}}^+\bfu_{p,i}\Iw_{\GSp_{2g}}^+$ can be given by $$\Iw_{\GSp_{2g}}^+\bfu_{p, i}\Iw_{\GSp_{2g}}^+=\sqcup_{j}\bfdelta_{i, j}\Iw_{\GSp_{2g}}^+$$ for some $\bfdelta_{i, j}\in \GSp_{2g}(\Q_p)\cap M_{2g}(\Z_p)$; in particular, $\bfdelta_{i,j} = \bflambda_{i,j}\bfu_{p, i}$ for some $\bflambda_{i,j}\in \Iw_{\GSp_{2g}}^+$. \\

For any $(\bfgamma, \bfupsilon)\in \T_0$, write $(\bfgamma, \bfupsilon) = (\bfgamma_0, \bfupsilon_0)\bfbeta$ for some $\bfbeta\in B_{\GL_g, 0}^+$ such that $\bfgamma_0\in U_{\GL_g, 1}^{\opp}$. Then, the left action of $\bfu_{p,i}$ on $\T_0$ is defined by the formula $$\bfu_{p,i}\cdot (\bfgamma, \bfupsilon) = (\bfu_{p, i}^{\square}\bfgamma_0\bfu_{p, i}^{\square, -1}, \bfu_{p,i}^{\blacksquare}\bfupsilon_{0}\bfu_{p, i}^{\square, -1})\bfbeta,$$ where we write $$\bfu_{p, i} = \begin{pmatrix}\bfu_{p, i}^{\square} & \\ & \bfu_{p,i}^{\blacksquare}\end{pmatrix}.$$ Consequently, this defines a left action of $\bfu_{p,i}$ on $D_{\kappa}^r$. On the other hand, the right multiplication of $\bfdelta_{i, j}$ on $\GSp_{2g}(\A_f)$ gives a left action on the homomorphisms between the free abelian group of simplical complexes on $\overline{X}^{\BS}_{\Iw}(\C)$. The two actions then combine to an action on $C^{\bullet}(\Iw_{\GSp_{2g}}^+, D_{\kappa}^r)$ and hence defines the action of $[\Iw_{\GSp_{2g}}^+ \bfu_{p,i} \Iw_{\GSp_{2g}}^+]$ on $C^{\bullet}(\Iw_{\GSp_{2g}}^+, D_{\kappa}^r)$ by 
\[
[\Iw_{\GSp_{2g}}^+ \bfu_{p,i} \Iw_{\GSp_{2g}}^+] \cdot \sigma : = \sum_{j}\bfdelta_{i,j}\sigma = \sum_{j} \bflambda_{i,j}\cdot \left(\bfu_{p, i}\cdot \sigma\right)
\] for any $\sigma\in C^{\bullet}(\Iw_{\GSp_{2g}}^+, D_{\kappa}^r)$. We shall denote by $U_{p,i}$ and $U_p$ the operators on $C^{\bullet}(\Iw_{\GSp_{2g}}^+, D_{\kappa}^r)$ corresponding to the classes of double cosets $[\Iw_{\GSp_{2g}}^+\bfu_{p, i}\Iw_{\GSp_{2g}}^+]$ and $[\Iw_{\GSp_{2g}}^+ \bfu_p\Iw_{\GSp_{2g}}^+]$. The Hecke algebra at $p$ is then defined to be $\bbT_p=\bbT_{p, \Z_p}=\Z_p[\Upsilon_p]$.

\begin{Definition}\label{Definition: Hecke algebra}
We define \begin{align*}
    &\bbT^p  :=\otimes_{q\neq p}\bbT_q=\textbf{ the Hecke algebra outside $p$}\\
    &\bbT  := \bbT^p\otimes_{\Z_p}\bbT_p=\textbf{ the total Hecke algebra}.
\end{align*}
\end{Definition}

\begin{Lemma}\label{Lemma: the parabolic cohomolgy is Hecke stable}
The parabolic cohomology groups $H_{\Par}^r(X_{\Iw^+}(\C), D_{\kappa}^r)$ are $\bbT$-stable.
\end{Lemma}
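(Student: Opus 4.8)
The plan is to deduce $\bbT$-stability of $H_{\Par}^{t}$ from the fact that the Hecke operators are defined at the level of cochain complexes and are compatible with the maps defining parabolic cohomology. Recall that, by definition, $H_{\Par}^{t}(X_{\Iw^+}(\C), D_{\kappa}^r)=\image\bigl(H^t_c(X_{\Iw^+}(\C), D_{\kappa}^r)\to H^t(X_{\Iw^+}(\C), D_{\kappa}^r)\bigr)$. So it suffices to promote each $\bbT$-operator to a self-map of the three complexes $C^{\bullet}(\Iw_{\GSp_{2g}}^+, D_{\kappa}^r)$, $C^{\bullet}_{\partial}(\Iw_{\GSp_{2g}}^+, D_{\kappa}^r)$ and $C_c^{\bullet}(\Iw_{\GSp_{2g}}^+, D_{\kappa}^r)=\Cone(\pi)$ in such a way that $\pi$ and the natural morphism $C_c^{\bullet}\to C^{\bullet}$ become $\bbT$-equivariant; passing to cohomology then makes $f\colon H^t_c\to H^t$ a $\bbT$-module map, and $\image(f)$ is automatically $\bbT$-stable.

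First I would check that each Hecke correspondence extends to the Borel--Serre compactification with its boundary stable. For $q\nmid pN$ the double coset $[\GSp_{2g}(\Z_q)\bfdelta\GSp_{2g}(\Z_q)]$ acts purely geometrically, by right multiplication of the representatives $\bfdelta_j$ on the finite-adelic factor (they act trivially on $D_{\kappa}^r$); this is a correspondence of locally symmetric spaces, which extends to $\overline{X}_{\Iw^+}^{\BS}(\C)$ and carries $\partial\overline{X}_{\Iw^+}^{\BS}(\C)$ into itself. For the operators $U_{p,i}$ the situation is the same up to bookkeeping: they combine the left action of $\bfu_{p,i}$ on $D_{\kappa}^r$ defined in \S\ref{section: distributions} with right multiplication by $\bfdelta_{i,j}=\bflambda_{i,j}\bfu_{p,i}$, and again this is a Hecke correspondence on $X_{\Iw^+}(\C)$ that extends to the Borel--Serre compactification respecting the boundary stratification (as in \cite[\S 2]{Hansen-PhD} and \cite[\S 3]{Barrera}). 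Concretely this means one may choose the fixed triangulation of $\overline{X}_{\Iw^+}^{\BS}(\C)$ subordinate to these correspondences, so that the very formulas defining the $\bbT$-action on $C^{\bullet}(\Iw_{\GSp_{2g}}^+, D_{\kappa}^r)$ also make sense on $C^{\bullet}_{\partial}(\Iw_{\GSp_{2g}}^+, D_{\kappa}^r)$, and $\pi$ intertwines them.

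Granting this, the rest is formal. Since $\pi\colon C^{\bullet}(\Iw_{\GSp_{2g}}^+, D_{\kappa}^r)\to C^{\bullet}_{\partial}(\Iw_{\GSp_{2g}}^+, D_{\kappa}^r)$ is a $\bbT$-equivariant morphism of complexes of $\bbT$-modules, its mapping cone $C_c^{\bullet}(\Iw_{\GSp_{2g}}^+, D_{\kappa}^r)$ inherits a canonical $\bbT$-action (acting on the direct summand $C^t\oplus C_{\partial}^{t-1}$ diagonally), for which the differential $d_c$ is $\bbT$-linear, and the structural maps $C^{\bullet}_{\partial}[-1]\to C_c^{\bullet}\to C^{\bullet}$ are $\bbT$-equivariant. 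Taking cohomology yields a $\bbT$-equivariant map $f\colon H^t_c(X_{\Iw^+}(\C), D_{\kappa}^r)\to H^t(X_{\Iw^+}(\C), D_{\kappa}^r)$. Finally, for $T\in\bbT$ and $\alpha\in H_{\Par}^{t}=\image(f)$, writing $\alpha=f(\beta)$ gives $T\alpha=f(T\beta)\in\image(f)=H_{\Par}^{t}$, proving $\bbT$-stability for every degree $t$ (and in particular for the $H_{\Par}^{r}$ in the statement).

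The one genuinely nontrivial point is the first step, namely that the Hecke correspondences at $p$ — where the coefficient sheaf $D_{\kappa}^r$ is really acted upon, not merely transported — extend to the Borel--Serre boundary compatibly with a single choice of triangulation. I expect this to be the main obstacle in principle, but it is standard and already available in the literature (Ash--Stevens, Hansen, Barrera), so I would invoke it rather than reprove it; all remaining steps are formal manipulations with mapping cones and do not require any new input beyond the constructions already set up above.
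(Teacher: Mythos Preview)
Your proposal is correct and follows essentially the same approach as the paper: the paper also argues that the Borel--Serre boundary complex $C^{\bullet}_{\partial}$ carries compatible Hecke actions, so that $\pi$ and hence the cone map $C_c^{\bullet}\to C^{\bullet}$ are Hecke-equivariant, and then concludes by passing to cohomology. Your write-up is somewhat more detailed (spelling out the mapping-cone action and the final image argument), but the strategy and the one nontrivial input you identify---extendability of the Hecke correspondences to the Borel--Serre compactification---are exactly what the paper uses.
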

\begin{proof}
Due to the nature of the Borel--Serre compactification, $C_{\partial}^{\bullet}(\Iw_{\GSp_{2g}^+}, D_{\kappa}^r)$ admits Hecke actions as the ones defined above. Hence \[
    \pi: C^{\bullet}(\Iw_{\GSp_{2g}}^+, D_{\kappa}^r)\rightarrow C_{\partial}^{\bullet}(\Iw_{\GSp_{2g}}^+, D_{\kappa}^r)
\] is a Hecke equivariant morphism of cochain complexes and hence $$C_c^{\bullet}(\Iw_{\GSp_{2g}^+}, D_{\kappa}^r)\rightarrow C^{\bullet}(\Iw_{\GSp_{2g}^+}, D_{\kappa}^r)$$ is also Hecke equivariant and induces a Hecke equivariant map on cohomology groups $$H^t_c(X_{\Iw^+}(\C), D_{\kappa}^r)\rightarrow H^t(X_{\Iw^+}(\C), D_{\kappa}^r).$$ This then shows the desired result.
\end{proof}

\subsection{The cuspidal eigenvariety}\label{subsection: cuspidal eigenvariety} In this subsection, we extract out the cuspidal part of the eigenvarieties constructed in \cite{Johansson-Newton}. Although this is an easy consequence of \textit{op. cit.} for experts, we write down the construction after recalling sufficiently amount of materials. 

\begin{Lemma}\label{Lemma: representation of the weight functor}
The functor assigning each sheafy complete affinoid $(\Z_p, \Z_p)$-algebra $(R, R^+)$ to the set $\Hom_{\cts}(T_{\GL_g, 0}, R^\times)$ is represented by the affinoid algebra $(\Z_p\llbrack T_{\GL_g, 0}\rrbrack, \Z_p\llbrack T_{\GL_g, 0}\rrbrack)$.
\end{Lemma}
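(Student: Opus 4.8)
The plan is to exhibit, for every sheafy complete affinoid $(\Z_p, \Z_p)$-algebra $(R, R^+)$, a bijection — natural in $(R, R^+)$ — between continuous homomorphisms of Huber pairs $(\Z_p\llbrack T_{\GL_g, 0}\rrbrack, \Z_p\llbrack T_{\GL_g, 0}\rrbrack)\to (R, R^+)$ and continuous characters $\chi\colon T_{\GL_g, 0}\to R^\times$. Write $\Lambda:=\Z_p\llbrack T_{\GL_g, 0}\rrbrack=\varprojlim_{s, n}(\Z/p^n\Z)[T_{\GL_g, 0}/T_{\GL_g, s}]$ with its natural topology. First I would record that $(\Lambda, \Lambda)$ really is an object of the source category: since $p$ is odd, $T_{\GL_g, 0}\cong(\Z_p^\times)^g\cong \Delta\times T_{\GL_g, 1}$ with $\Delta$ finite and $T_{\GL_g, 1}\cong\Z_p^g$, so $\Lambda\cong\Z_p[\Delta]\llbrack X_1, \ldots, X_g\rrbrack$ is Noetherian and is its own ring of definition, hence complete and sheafy. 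The tautological character $\kappa^{\univ}\colon T_{\GL_g, 0}\to\Lambda^\times$, $\bftau\mapsto[\bftau]$ (the group-like element), is continuous, so post-composition $\phi\mapsto\phi\circ\kappa^{\univ}$ defines the candidate natural transformation $\Hom\bigl((\Lambda, \Lambda), (R, R^+)\bigr)\to\Hom_{\cts}(T_{\GL_g, 0}, R^\times)$; it is well defined because a ring map sends the unit $[\bftau]$ to a unit of $R$.

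For the inverse, I would start from a continuous character $\chi\colon T_{\GL_g, 0}\to R^\times$ and extend it $\Z_p$-linearly to a ring homomorphism $\Z_p[T_{\GL_g, 0}]\to R$. The crucial point — the one that requires genuine argument — is that $\chi(T_{\GL_g, 0})\subseteq R^+$. For the torsion part $\Delta$ this is immediate: each such $\chi(\bftau)$ satisfies $X^{|\Delta|}-1=0$ and $R^+$ is integrally closed in $R$. For $\bftau\in T_{\GL_g, 1}$: the structure morphism $\Z_p\to R$ is continuous, so $p^m\to 0$ in $R$; hence $\bftau^{p^m}\to 1$ in $T_{\GL_g, 0}$ and, by continuity of $\chi$, $\chi(\bftau)^{p^m}=\chi(\bftau^{p^m})\to 1$. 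As $R^+$ is open, $\chi(\bftau)^{p^m}\in R^+$ for $m\gg 0$, so $\chi(\bftau)$ is a root of the monic polynomial $X^{p^m}-\chi(\bftau)^{p^m}\in R^+[X]$ and therefore lies in $R^+$ (applying the same reasoning to $\bftau^{-1}$ if needed). Since $R^+$ is a $\Z_p$-subalgebra of $R$ containing $\chi(T_{\GL_g, 0})$, the $\Z_p$-linear extension takes values in $R^+$. It remains to see this map $\Z_p[T_{\GL_g, 0}]\to R^+$ is continuous for the topology of $\Lambda$ and so extends (uniquely, since $R$ is complete and Hausdorff) to a continuous $\phi_\chi\colon\Lambda\to R$ with $\phi_\chi(\Lambda)\subseteq\overline{R^+}=R^+$; this follows by estimating the images of the defining open ideals $\ker(\Lambda\to(\Z/p^n\Z)[T_{\GL_g, 0}/T_{\GL_g, s}])$, using $p^n\to 0$, the uniform continuity of $\chi$ on the compact group $T_{\GL_g, 0}$, and the power-boundedness of $\chi(T_{\GL_g, 0})\subseteq R^+$.

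Finally I would verify that $\phi\mapsto\phi\circ\kappa^{\univ}$ and $\chi\mapsto\phi_\chi$ are mutually inverse: one composite is the identity since $\phi_\chi\circ\kappa^{\univ}=\chi$ by construction, and the other because two continuous ring maps $\Lambda\to R$ that agree on the $[\bftau]$ agree on the dense subring $\Z_p[T_{\GL_g, 0}]$, hence everywhere; naturality in $(R, R^+)$ is clear from the descriptions. I expect the main obstacle to be exactly the inclusion $\chi(T_{\GL_g, 0})\subseteq R^+$ together with the attendant continuity/extension claim: the subtlety is that $R$ need not be a Tate ring, so one cannot appeal to topological nilpotence of a pseudouniformiser; instead the argument rests on topological nilpotence of $p$ — forced by the $(\Z_p, \Z_p)$-algebra structure — and on integral-closedness of $R^+$ via the relation $X^{p^m}=\chi(\bftau)^{p^m}$.
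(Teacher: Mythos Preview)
Your proposal is correct and takes the same approach as the paper: exhibit the bijection by $\Z_p$-linear extension of characters, the crux being that $\chi(T_{\GL_g,0})\subseteq (R^+)^\times$. The paper's proof is a three-line sketch that defers this crux to Johansson--Newton, whereas you supply it directly via integral closedness of $R^+$; one small quibble is that the convergence $\bftau^{p^m}\to 1$ in $T_{\GL_g,0}$ is a feature of the pro-$p$ group $T_{\GL_g,1}$ and not a consequence of $p^m\to 0$ in $R$ as your phrasing suggests, though you do correctly use the latter in the subsequent continuity/extension step.
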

\begin{proof}
For any sheafy complete affinoid $(\Z_p, \Z_p)$-algebra $(R, R^+)$, we have a bijection $$\Hom_{\cts}(T_{\GL_g,0}, R^\times) \simeq \Hom_{\Z_p}^{\cts}(\Z_p\llbrack T_{\GL_g, 0}\rrbrack, R).$$ The bijection is obtained by extending the characters on the left-hand-side $\Z_p$-linearly. Note that any continuous character $T_{\GL_g, 0}\rightarrow R$ automatically lands in $(A^+)^\times$ as discussed in \cite[Defintiion 4.1]{Johansson-Newton}.
\end{proof}

\begin{Definition}\label{Definition: the weight space}
The \textbf{weight space} in our concern is then defined to be $$\calW:=\Spa(\Z_p\llbrack T_{\GL_g, 0}\rrbrack, \Z_p\llbrack T_{\GL_g, 0}\rrbrack)^{\an},$$ where the superscript ``$\bullet^{\an}$'' means that we are taking the analytic locus of the corresponding adic space. 
\end{Definition}

For any open affinoid $\calU\subset \calW$, we will always write the corresponding affinoid algebra to be $(R_{\calU}, R_{\calU}^+)$ and the universal weight on $\calU$ to be $\kappa_{\calU}: T_{\GL_g, 0}\rightarrow R_{\calU}^{\times}$. In the following, we will always assume that the universal weight $\kappa_{\calU}$ and $R_{\calU}$ admit the previous assumptions that we made for $p$-adic weights. That is, 
we assume that one can choose a norm $|\cdot|_{R_{\calU}}$ on $R_{\calU}$ so that $R_{\calU}$ is a Banach--Tate $\Z_p$-algebra and that $|\cdot|_{R_{\calU}}$ is adapted to $\kappa_{\calU}$.\\

Let $\bbA_{\Z_p}^{1, \ad}:=\Spa(\Z_p[T], \Z_p)$, we write $\bbA_{\calU}^1:=\calU\times_{\Spa(\Z_p, \Z_p)}\bbA_{\Z_p}^{1, \ad}$ for any open affinoid $\calU\subset\calW$. We have the following explicit description $$\bbA_{\calU}^1=\cup_m \Spa\left(R_{\calU}\langle \varpi^mT\rangle, R_{\calU}^+\langle \varpi^m T\rangle\right),$$ where $\varpi$ is a fixed pseudouniformiser of $R_{\calU}$. Moreover, the global functions on $\bbA^1_{\calU}$ is the ring $$R_{\calU}\{\{ T\}\}:=\left\{\sum_{n\geq 0}a_n T^n\in R_{\calU}\llbrack T\rrbrack: |a_n|_R M^n\rightarrow 0\text{ for all }M\in \R_{\geq 0}\right\}.$$\\

Fix an open affinoid $\calU\subset\calW$, recall that the total Borel--Serre cochain complex $C^{\tol}_{\kappa_{\calU}, r}$ is a potentially ON-able Banach $R_{\calU}$-module. Moreover, $U_p$ acts on $C^{\tol}_{\kappa_{\calU}, r}$ compactly by \cite[Corollary 3.3.10]{Johansson-Newton}, hence we can consider the Fredholm determinant $$F_{\kappa_{\calU}}^r(T):= \det\left(1-TU_p | C^{\tol}_{\kappa_{\calU}, r}\right)\in R_{\calU}\{\{T\}\}.$$ According to [\textit{op.cit.}, Proposition 4.1.2 \& Proposition 4.1.4], the Fredholm determinant doesn't depend on $r\in [r_{\kappa},1)$ and the chosen norm on $R_{\calU}$, thus we write $F_{\kappa_{\calU}}$. By [\textit{op. cit.}, Corollary 4.1.5], the Fredholm determinants $(F_{\kappa_{\calU}})_{\calU}$, where $\calU$ ranges over all open affinoid $\calU\subset\calW$, glue together to $F_{\calW}\in \scrO_{\calW}(\calW)\{\{T\}\}$ and hence we define the \textit{\textbf{Fredholm hypersurface}} (or the \textit{\textbf{spectral variety}}) $$\calZ:= \text{ the zero locus of $F_{\calW}$ in $\bbA^1_{\calW}$}$$ and denote by $\wt_{\calZ}: \calZ\rightarrow \calW$ the structure morphism.

\begin{Definition}\label{Definition: slope datum}
Let $h=\frac{m}{n}\in \Q_{\geq 0}$ and define $$\bbB_{\calU, h}:=\{x\in \bbA^1_{\calU}: |T^n|_x\leq |\varpi^{-m}|_x\}.$$ We also define $\calZ_{\calU}$ to be the zero locus of $F_{\kappa_{\calU}}$ in $\bbA_{\calU}^1$. Then, we say the pair $(\calU, h)$ is a \textbf{slope datum} if and only if $$\calZ_{\calU, h}:=\calZ_{\calU}\cap \bbB_{\calU, h}\rightarrow \calU$$ is finite of constant degree. 
\end{Definition}

\begin{Proposition}[$\text{\cite[Theorem 2.3.2]{Johansson-Newton}}$]\label{Proposition: properties of the slope decomposition of the Fredholm determinant}
Keep the above notations. We have the following\begin{enumerate}
    \item The pair $(\calU, h)$ is a slope data if and only if $F_{\kappa_{\calU}}$ admits a factorisation $F_{\kappa_{\calU}}=QS$, where\begin{enumerate}
        \item[$\bullet$] $Q$ is a polynomial whose leading coefficient is a unit in $R_{\kappa_{\calU}}$ and its corresponding Newton polygon has slope $\leq h$ (see \cite[Definition 2.2.4]{Johansson-Newton}),
        \item[$\bullet$] $S=1+\sum_{n>0}a_n T^n\in R_{\kappa_{\calU}}\{\{T\}\}$ and 
        \item[$\bullet$] the ideal generated by $Q$ and $S$ in $R_{\kappa_{\calU}}\{\{T\}\}$ it the unit ideal.
    \end{enumerate}
    \item The collection $\Cov_{\sd}(\calZ):=\{\calZ_{\calU, h}: (\calU, h)\text{ is a slope datum}\}$ is an open cover for $\calZ$.
\end{enumerate}
\end{Proposition}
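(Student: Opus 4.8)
The statement is the specialisation of \cite[Theorem 2.3.2]{Johansson-Newton} to the cochain complex built above, so the plan is first to record that our data fits the abstract Riesz-theoretic framework of \textit{op. cit.}: the total Borel--Serre complex $C^{\tol}_{\kappa_{\calU}, r}$ is a potentially ON-able Banach $R_{\calU}$-module (using the explicit potential ON-basis of $D^r_{\kappa_{\calU}}(\T_0, R_{\calU})$ from \cite[\S 3.2]{Johansson-Newton}), $U_p$ acts compactly on it by \cite[Corollary 3.3.10]{Johansson-Newton}, and hence $F_{\kappa_{\calU}}(T) = \det(1 - TU_p \mid C^{\tol}_{\kappa_{\calU}, r})$ is a well-defined Fredholm series in $R_{\calU}\{\{T\}\}$ with constant term $1$, independent of $r$ and of the adapted norm. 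Once these inputs are in place, parts (1) and (2) are formal consequences of the theory of Fredholm series over Banach--Tate $\Z_p$-algebras of \cite[\S 2]{Johansson-Newton}; below I indicate the shape of the argument underlying each.

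For part (1) the idea is a Newton-polygon/Weierstrass-preparation argument adapted to the (possibly non-domain) Tate ring $R_{\calU}$. The affinoid $\bbB_{\calU, h}$ is the locus $|T^n|_x \le |\varpi^{-m}|_x$, and $\calZ_{\calU, h}$ is the vanishing locus of $F_{\kappa_{\calU}}$ there. I would show that $\calZ_{\calU, h} \to \calU$ is finite of constant degree precisely when the ``slopes $\le h$'' part of $F_{\kappa_{\calU}}$ splits off as an honest polynomial factor: given a factorisation $F_{\kappa_{\calU}} = QS$ with $Q$ of unit leading coefficient and Newton polygon of slope $\le h$, $S$ a Fredholm series, and $(Q, S)$ the unit ideal, one gets $\calZ_{\calU, h} = \{Q = 0\}$, finite over $\calU$ of degree $\deg Q$; conversely, finiteness of constant degree forces the Newton polygon of $F_{\kappa_{\calU}}$, read off the norms of its coefficients, to have a break of locally constant length at slope $h$, which is exactly what produces $Q$. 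Uniqueness of the factorisation — and hence that $Q$ and $S$ are well-defined — follows from the coprimality condition via a B\'ezout identity $aQ + bS = 1$ in $R_{\calU}\{\{T\}\}$. The point requiring care, and the reason the Banach--Tate formalism is needed rather than the classical affinoid one, is that $R_{\calU}$ need not be a domain (notably over the $p = 0$ locus of $\calW$), so one argues throughout with $|\cdot|_{R_{\calU}}$ and the polygon of the coefficient-norms rather than with valuations of roots in a residue field.

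For part (2) I would show every point $z \in \calZ$ lies in some $\calZ_{\calU, h}$ with $(\calU, h)$ a slope datum. Push $z$ forward to $w = \wt_{\calZ}(z) \in \calW$ and choose an open affinoid $\calU \ni w$ over which the universal weight is adapted to a suitable norm; since $F_{\kappa_{\calU}}$ has constant term $1$, the point $z$ corresponds to a reciprocal $U_p$-eigenvalue of finite, nonzero valuation in the fibre, so $z$ lies on the slopes-$\le h$ part of the Newton polygon at $w$ for some rational $h \ge 0$ that one may take to be a vertex. After shrinking $\calU$, continuity of the coefficients of $F_{\kappa_{\calU}}$ and the properness of $\calZ_\calU \cap \bbB_{\calU, h} \to \calU$ make the polygon ``stabilise'' over all of $\calU$, so the factor $Q$ of part (1) persists with constant degree; thus $(\calU, h)$ is a slope datum containing $z$. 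Openness of each $\calZ_{\calU, h}$ in $\calZ$ is immediate from $\bbB_{\calU, h}$ being a rational subset of $\bbA^1_{\calU}$, so $\Cov_{\sd}(\calZ)$ is an open cover.

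The main obstacle is precisely this last stabilisation step: guaranteeing that after shrinking $\calU$ the slopes-$\le h$ factor of $F_{\kappa_{\calU}}$ exists \emph{uniformly over $\calU$} and of constant degree, not merely fibrewise over $w$. This is exactly the content of the family version of Coleman's factorisation theorem, proved over Banach--Tate $\Z_p$-algebras in \cite[\S 2]{Johansson-Newton}, which I would invoke rather than re-prove; everything else in the argument is bookkeeping with Newton polygons and unwinding the definitions. Since the abstract inputs (potential ON-ability, compactness of $U_p$, Fredholm-ness of $F_{\kappa_{\calU}}$) have been verified above, the proposition then follows directly from \cite[Theorem 2.3.2]{Johansson-Newton}.
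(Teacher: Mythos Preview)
Your proposal is correct, and in fact it goes well beyond what the paper does: the paper gives no proof of this proposition at all, treating it as a black-box citation of \cite[Theorem 2.3.2]{Johansson-Newton}. You have correctly identified that the only thing to check is that the abstract hypotheses of \textit{op.\ cit.} are met (potential ON-ability of $C^{\tol}_{\kappa_{\calU}, r}$, compactness of $U_p$, Fredholm-ness of $F_{\kappa_{\calU}}$), and these are already recorded in the paragraphs preceding the proposition. Your additional sketch of the Newton-polygon/Weierstrass-preparation mechanism behind parts (1) and (2) is accurate and helpful context, but strictly speaking unnecessary for the paper's purposes since the result is simply being quoted.
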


\begin{Remark}\label{Remark: Fredholm hypersurfacr for compactly supported cohomology}
\normalfont Recall that the cochain complex $C_c^{\bullet}(\Iw_{\GSp_{2g}}^+, D_{\kappa}^r)$ computes the compactly supported cohomology groups. By definition, the total complex $C_{c, \kappa,r}^{\tol}:=\oplus_t C^t_c(\Iw_{\GSp_{2g}}^+, D_{\kappa}^r)$ is a finite number of copies of $D_{\kappa}^r(\T_0, R)$ as an $R$-module. Therefore, it is a potentially ON-able module and the above also applies to $C_{c, \kappa,r}^{\tol}$. In particular, we have a Fredholm hypersurface $\calZ^c$ when considering the compactly supported cohomology groups and for any slope datum $(\calU, h)$ for $\calZ^c$, we have a slope decomposition $C_{c, \kappa_{\calU}, r}^{\tol}=C_{c, \kappa_{\calU}, r}^{\tol, \leq h}\oplus C_{c, \kappa_{\calU}, r}^{\tol, >h}$.
\end{Remark}

Let $(\calU, h)$ be a slope datum for $\calZ$ and let $C_{\kappa_{\calU}}^{\tol}=\oplus_t C^t(\Iw_{\GSp_{2g}}^+, D_{\kappa_{\calU}}^{\dagger})$. As the above discussions hold for all $r\in [r_{\kappa}, 1)$, the results also apply to $C_{\kappa_{\calU}}^{\tol}$. In particular, when considering the $U_p$-operator acting on $C_{\kappa_{\calU}}^{\tol}$, we have the factorisation of the corresponding Fredholm determinant $F_{\kappa_{\calU}}$ and the decomposition $C_{\kappa_{\calU}}^{\tol}=C_{\kappa_{\calU}}^{\tol, \leq h}\oplus C_{\kappa_{\calU}}^{\tol, >h}$. Define $H^t(X_{\Iw^+}(\C), D_{\kappa_{\calU}}^{\dagger})^{\leq h}$ to be the $t$-th cohomology group of the cochain complex $C_{\kappa_{\calU}}^{\tol, \leq h}$ and let $H_{\kappa_{\calU}}^{\tol,\leq h}=\oplus_{t} H^t(X_{\Iw^+}(\C), D_{\kappa_{\calU}}^{\dagger})^{\leq h}$. From the construction of the eigenvarieties in \cite{Johansson-Newton}, we know that the assignment $$\Cov_{\sd}(\calZ)\ni \calZ_{\calU, h}\mapsto H_{\kappa_{\calU}}^{\tol, \leq h}$$ is a coherent sheaf on $\calZ$.  \\

We use the similar notation when considering the cohomology groups with compact supports. If $(\calU, h)$ is a slope datum for both $\calZ$ and $\calZ^c$, we have $$C_{c, \kappa_{\calU}}^{\tol, \leq h}\rightarrow C_{\kappa_{\calU}}^{\tol, \leq h}$$ due to the definition of the slope $\leq h$-decomposition and the Hecke-equivariance of the map $C_{c, \kappa_{\calU}}^{\tol}\rightarrow C_{\kappa_{\calU}}^{\tol}$. Hence we have a Hecke-equivariant map $$H_{c, \kappa_{\calU}}^{\tol, \leq h}\rightarrow H_{\kappa_{\calU}}^{\tol, \leq h}$$ which preserves the degrees. We then define $$H_{\Par, \kappa_{\calU}}^{\tol, \leq h}:=\image(H_{c, \kappa_{\calU}}^{\tol, \leq h}\rightarrow H_{\kappa_{\calU}}^{\tol, \leq h})$$ as well as the analogous notation for each degree.  

\begin{Proposition}\label{Proposition: parabolic cohomolgies gives coherent sheaves on the Fredholm hypersurface}
The assignment $$\Cov_{\sd}(\calZ)\ni \calZ_{\calU, h}\mapsto H_{\Par, \kappa_{\calU}}^{\tol, \leq h}$$ defines a coherent sheaf on $\calZ$, denoted by $\scrH_{\Par}^{\tol}$.
\end{Proposition}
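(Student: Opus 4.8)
\section*{Proof proposal}

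The plan is to realise $\scrH_{\Par}^{\tol}$ as the image of a morphism of coherent sheaves and to invoke that coherent sheaves on a locally Noetherian adic space form an abelian category. By \cite{Johansson-Newton} the assignments $\calZ_{\calU, h}\mapsto H_{\kappa_{\calU}}^{\tol, \leq h}$ and $\calZ^c_{\calU, h}\mapsto H_{c, \kappa_{\calU}}^{\tol, \leq h}$ define coherent sheaves $\scrH^{\tol}$ on $\calZ$ and $\scrH^{\tol}_c$ on $\calZ^c$. The Hecke‑equivariant map of cochain complexes $C_c^{\bullet}(\Iw_{\GSp_{2g}}^+, D_{\kappa_{\calU}}^{\dagger})\to C^{\bullet}(\Iw_{\GSp_{2g}}^+, D_{\kappa_{\calU}}^{\dagger})$ commutes with $U_p$, hence carries slope‑$\leq h$ parts to slope‑$\leq h$ parts; so for any $(\calU, h)$ that is a slope datum for both $\calZ$ and $\calZ^c$ it induces a map $\phi_{\calU, h}\colon H_{c, \kappa_{\calU}}^{\tol, \leq h}\to H_{\kappa_{\calU}}^{\tol, \leq h}$ of $R_{\calU}$‑modules, and $H_{\Par, \kappa_{\calU}}^{\tol, \leq h}=\image\phi_{\calU, h}$ is finitely generated since $R_{\calU}$ is Noetherian and $H_{\kappa_{\calU}}^{\tol, \leq h}$ is finitely generated.

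First I would check that the pairs $(\calU, h)$ which are slope data for both $\calZ$ and $\calZ^c$ still form an open cover of $\calZ$ refining $\Cov_{\sd}(\calZ)$. Given $z\in\calZ$, choose an affinoid $\calU_0\subset\calW$ around $\wt(z)$ and $h\in\Q_{\geq 0}$ with $z\in\bbB_{\calU_0, h}$; since the product $F_{\kappa_{\calU_0}}\cdot F^c_{\kappa_{\calU_0}}$ is again a Fredholm series, \cite[Theorem 2.3.2]{Johansson-Newton} yields a smaller affinoid $\calU\subseteq\calU_0$ with $z\in\calZ_{\calU, h}$ over which this product admits a slope‑$\leq h$ factorisation in the sense of Proposition \ref{Proposition: properties of the slope decomposition of the Fredholm determinant}. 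A factor of a Fredholm series inherits such a factorisation, so $(\calU, h)$ is simultaneously a slope datum for $\calZ$ and for $\calZ^c$. Thus it is enough to define the sheaf on this refined cover, and since a coherent sheaf is determined by compatible data on a cover, restricting the result to any $\calZ_{\calU, h}$ with $(\calU, h)$ adapted to both will recover $H_{\Par, \kappa_{\calU}}^{\tol, \leq h}$.

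Next I would verify the gluing. For an open affinoid $\calU'\subseteq\calU$ the restriction $R_{\calU}\to R_{\calU'}$ is flat, and by \cite{Johansson-Newton} the slope‑$\leq h$ truncations commute with this base change, while $C_{\kappa_{\calU}}^{\tol, \leq h}$ (and its compactly supported analogue) is a bounded complex of finite projective $R_{\calU}$‑modules; hence cohomology and the formation of $\phi$ commute with restriction, and because image commutes with flat base change, so does $H_{\Par, \kappa_{\calU}}^{\tol, \leq h}$. Passing from $h$ to a smaller $h'$ over the same $\calU$ replaces $C_{\kappa_{\calU}}^{\tol, \leq h}$ and $C_{c, \kappa_{\calU}}^{\tol, \leq h}$ by direct summands, compatibly with $\phi$, so the images are again compatible. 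Therefore the modules $H_{\Par, \kappa_{\calU}}^{\tol, \leq h}$ glue exactly as $H_{\kappa_{\calU}}^{\tol, \leq h}$ and $H_{c, \kappa_{\calU}}^{\tol, \leq h}$ do; equivalently, pushing $\scrH^{\tol}$ and $\scrH^{\tol}_c$ forward along the closed immersions into the Fredholm hypersurface attached to $C^{\bullet}\oplus C_c^{\bullet}$ (whose Fredholm determinant is $F_{\calW}\cdot F^c_{\calW}$), the maps $\phi_{\calU, h}$ assemble into a morphism of coherent sheaves whose image is a coherent subsheaf of (the pushforward of) $\scrH^{\tol}$ supported on $\calZ$, hence descends to a coherent sheaf $\scrH_{\Par}^{\tol}$ on $\calZ$; its sections over the affinoid $\calZ_{\calU, h}$ are $H_{\Par, \kappa_{\calU}}^{\tol, \leq h}$ because on an affinoid adic space of this type the equivalence between coherent sheaves and finite modules is exact.

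The main obstacle is the first bookkeeping step: arranging that slope data may be chosen simultaneously for $\calZ$ and $\calZ^c$ and that the resulting refinement still covers $\calZ$, together with recording that passing to images is compatible with the flat localisations and the summand projections built into the sheaf structure of \cite{Johansson-Newton}. Once these compatibilities are in place, the rest is the formal fact that the image of a morphism of coherent sheaves on a locally Noetherian adic space is coherent.
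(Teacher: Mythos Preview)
Your argument is correct and rests on the same computational core as the paper's: flat base change commutes with taking cohomology of the slope-$\leq h$ complexes (since they are of finite presentation) and hence with forming the image of $H_{c,\kappa_{\calU}}^{\tol,\leq h}\to H_{\kappa_{\calU}}^{\tol,\leq h}$. The paper, however, proceeds more directly: it simply writes down the base-change square
\[
\begin{tikzcd}
H_{c, \kappa_{\calU}}^{\tol, \leq h}\otimes_{R_{\calU}}R_{\calV}\arrow[r]\arrow[d] & H_{\kappa_{\calU}}^{\tol, \leq h}\otimes_{R_{\calU}} R_{\calV}\arrow[d]\\
H_{c, \kappa_{\calV}}^{\tol, \leq h}\arrow[r] & H_{\kappa_{\calV}}^{\tol, \leq h}
\end{tikzcd}
\]
for a rational open $\calV\subset\calU$, observes that $D_{\kappa_{\calU}}^{\dagger}\widehat{\otimes}_{R_{\calU}}R_{\calV}\simeq D_{\kappa_{\calV}}^{\dagger}$, and concludes that the vertical maps are isomorphisms because cohomology of finitely presented complexes commutes with flat base change. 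Your route via the auxiliary Fredholm hypersurface for the product series $F_{\calW}\cdot F^c_{\calW}$ and the abelian-category statement ``image of a morphism of coherent sheaves is coherent'' is a more structural packaging of the same content; what it buys you is an honest treatment of a bookkeeping point the paper leaves implicit, namely that pairs $(\calU,h)$ which are \emph{simultaneously} slope data for $\calZ$ and $\calZ^c$ still cover $\calZ$. The paper tacitly uses this when invoking $H_{c,\kappa_{\calU}}^{\tol,\leq h}$ but never verifies it, so in this respect your version is more complete.
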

\begin{proof} We need to show that for any slope data $(\calU, h)$ and $(\calV, h)$ for $\calZ$ with $\calV\subset \calU$ being a rational open subset, we have $H_{\Par, \kappa_{\calU}}^{\tol, \leq h}\otimes_{R_{\calU}}R_{\calV}=H_{\Par, \kappa_{\calV}}^{\tol, \leq h}$. This is the same to show the existence of the commutative diagram $$\begin{tikzcd}
H_{c, \kappa_{\calU}}^{\tol, \leq h}\otimes_{R_{\calU}}R_{\calV}\arrow[r]\arrow[d] & H_{\kappa_{\calU}}^{\tol, \leq h}\otimes_{R_{\calU}} R_{\calV}\arrow[d]\\
H_{c, \kappa_{\calV}}^{\tol, \leq h}\arrow[r] & H_{\kappa_{\calV}}^{\tol, \leq h}
\end{tikzcd}$$ whose vertical arrows are isomorphisms.\\

Observe that \[
    \scalemath{0.9}{D_{\kappa_{\calU}}^{\dagger}(\T_0, R_{\calU})\widehat{\otimes}_{R_{\calU}}R_{\calV}\simeq R_{\calU}\llbrack U_{\GSp_{2g}, 1}^{\opp}\rrbrack^{\wedge, \dagger}\widehat{\otimes}_{R_{\calU}}R_{\calV}\simeq R_{\calV}\llbrack U_{\GSp_{2g}, 1}^{\opp}\rrbrack^{\wedge, \dagger}\simeq D_{\kappa_{\calV}}^{\dagger}(\T_0, R_{\calV}),}
\]where the superscript ``$\bullet^{\wedge, \dagger}$'' means taking the completion with respect to the family of norms $(||\cdot||_r)_{r}$. Since both $C_{\kappa_{\calU}}^{\tol, \leq h}$ and $C_{c, \kappa_{\calU}}^{\tol, \leq h}$ are of finite presentation over $R_{\calU}$, taking cohomology commutes with flat base change, so \begin{align*}
    \scalemath{1}{H^t(X_{\Iw^+}(\C), D_{\kappa_{\calU}}^{\dagger})^{\leq h}\otimes_{R_{\calU}} R_{\calV}} & \simeq \scalemath{1}{H^t(X_{\Iw^+}(\C), D_{\kappa_{\calU}}^{\dagger}(\T_0, R_{\calU})\widehat{\otimes}_{R_{\calU}}R_{\calV})^{\leq h}}\\
    & \simeq \scalemath{1}{H_c^t(X_{\Iw^+}(\C), D_{\kappa_{\calV}}^{\dagger})^{\leq h}}\\
    \scalemath{1}{H_c^t(X_{\Iw^+}(\C), D_{\kappa_{\calU}}^{\dagger})^{\leq h}\otimes_{R_{\calU}} R_{\calV}} & \simeq \scalemath{1}{H_c^t(X_{\Iw^+}(\C), D_{\kappa_{\calU}}^{\dagger}(\T_0, R_{\calU})\widehat{\otimes}_{R_{\calU}}R_{\calV})^{\leq h}}\\
    & \simeq \scalemath{1}{H^t(X_{\Iw^+}(\C), D_{\kappa_{\calV}}^{\dagger})^{\leq h},}
\end{align*} where the first isomorphisms for both rows follow from that we are considering the finite slope parts. 
\end{proof}

For any slope datum $(\calU, h)$, the action of $\bbT$ on $H_{\Par, \kappa_{\calU}}^{\tol, \leq h}$ yields a morphism of commutative algebras $\bbT\rightarrow \End_{\scrO_{\calW}(\calU)}(H_{\Par, \kappa_{\calU}}^{\tol, \leq h})$ whose image is denoted by $\bbT_{\Par}^{ \calU, h}$, which is a finite algebra over $R_{\calU}$ since $H_{\Par, \kappa_{\calU}}^{\tol, \leq h}$ is finitely generated. Since $\scrH_{\Par}^{\tol}$ is a coherent sheaf, the assignment $$\scrT_{\!\!\Par}:\Cov_{\sd}(\calZ)\ni \calZ_{\calU, h}\mapsto \bbT_{\Par}^{\calU, h}$$ is a coherent sheaf of $\scrO_{\calZ}$-algebras. Then the cuspidal eigenvariety $\calE_{0}$ is defined to be $$\calE_0:=\Spa_{\calZ}(\scrT_{\!\!\Par}, \scrT_{\!\!\Par}^{\circ}),$$ where the sheaf of integral elements $\scrT_{\!\!\Par}^{\circ}$ is determined by \cite[Lemma A.3]{Johansson-Newton}. We name the structure morphisms $$\begin{tikzcd}
\calE_0\arrow[rr, "\pi^{\calE_0}_{\calZ}"]\arrow[rrrr, bend right = 20, "\wt"'] && \calZ\arrow[rr, "\wt_{\calZ}"] && \calW
\end{tikzcd}.$$ We further let $\calZ_{\Par}:= \image \pi_{\calZ}^{\calE_0}$ to be the Fredholm hypersurface corresponding to $\calE_0$.

\begin{Remark}\label{Remark: embedding the cuspidal eigenvariety into the whole eigenvariety}
\normalfont By applying the eigenvariety construction in \cite{Johansson-Newton} directly to $\GSp_{2g}$, one obtains the eigenvariety $\calE$, parametrising the Hecke eigenvectors of finite slope cohomology groups $H^{\bullet}(X_{\Iw^+}(\C), D_{\kappa}^{\dagger})^{\leq h}$. Recall that we have a Hecke-equivariant diagram $$\begin{tikzcd}
H_c^t(X_{\Iw^+}(\C), D_{\kappa}^\dagger)\arrow[r] & H^t(X_{\Iw^+}(\C), D_{\kappa}^\dagger)\arrow[r, "\pi"] & H_{\partial}^t(X_{\Iw}(\C), D_{\kappa}^{\dagger})\\ & H_{\Par}^t(X_{\Iw^+}(\C), D_{\kappa}^{\dagger})\arrow[u, hook]
\end{tikzcd}$$ for each $t$ such that $$H_{\Par}^t(X_{\Iw^+}(\C), D_{\kappa}^{\dagger})=\image\left(H_c^t(X_{\Iw^+}(\C), D_{\kappa}^r)\rightarrow H^t(X_{\Iw^+}(\C), D_{\kappa}^{\dagger})\right)=\ker \pi,$$ where the last equation is given by the exactness of the long exact sequence of cohomology groups. Let $\bbT^{\calU, h}$ be the image of $\bbT$ in $\End_{\scrO_{\calW}(\calU)}(H_{\kappa_{\calU}}^{\tol, \leq h})$, then there is a surjection $\bbT^{\calU, h}\twoheadrightarrow \bbT_{\Par}^{\calU, h}$ given by the restriction. Hence, one sees that there is a closed immersion $\calE_0\hookrightarrow \calE$ of adic spaces over $\calW$ and one views $\calE_0$ as the ``cuspidal part'' of $\calE$ since $\pi|_{H_{\Par}^t(X_{\Iw^+}(\C), D_{\kappa}^{\dagger})}$ is the zero map.
\end{Remark}

\begin{Remark}\label{Remark: Hansen's eigenvariety}
\normalfont As pointed out in \cite[Remark 4.1.9]{Johansson-Newton}, the eigenvariety constructed in \cite{Hansen-PhD} is the open locus of ${p\neq 0}$ inside $\calE$. Consequently, the cuspidal part of the eigenvariety in \textit{op. cit.} is the open locus of ${p\neq 0}$ inside our cuspidal eigenvariety $\calE_0$.
\end{Remark}

\begin{Corollary}\label{Corollary: pairing on the eigenvariety}
The pairing in Proposition \ref{Proposition: well-defined pairing on the parabolic cohomology} induce pairings $$\bls\cdot, \cdot\brs: \scrH_{\Par}^{\tol}\times \scrH_{\Par}^{\tol}\rightarrow \scrO_{\calZ}\text{ and }\bls \cdot, \cdot\brs: \pi^{\calE_0, *}_{\calZ}\scrH_{\Par}^{\tol}\times \pi^{\calE_0, *}_{\calZ}\scrH_{\Par}^{\tol}\rightarrow \scrO_{\calE_0}$$ of coherent sheaves on $\calZ$ and $\calE_0$ respectively. Moreover, the first pairing is $\bbT$-equivariant. 
\end{Corollary}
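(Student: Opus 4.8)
The plan is to carry the pairing of Proposition~\ref{Proposition: well-defined pairing on the parabolic cohomology} through the eigenvariety construction of \S\ref{subsection: cuspidal eigenvariety}, one slope datum at a time, and then glue. First I would fix a slope datum $(\calU, h)\in\Cov_{\sd}(\calZ)$ and a radius $r\in[r_{\kappa_{\calU}}, 1)$, apply Proposition~\ref{Proposition: well-defined pairing on the parabolic cohomology} with $R = R_{\calU}$ and $\kappa = \kappa_{\calU}$, and sum over $0\le t\le 2n_0$ (where $n_0 = g(g+1)/2$, pairing the degree-$t$ summand against the degree-$(2n_0-t)$ one) to get an $R_{\calU}$-bilinear pairing on $\bigoplus_t H^t_{\Par}(X_{\Iw^+}(\C), D^r_{\kappa_{\calU}})$. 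Since the slope $\le h$ part of the cohomology is computed equally by $D^r_{\kappa_{\calU}}$ and by $D^\dagger_{\kappa_{\calU}}$ (as recalled in \S\ref{subsection: cuspidal eigenvariety}, following \cite{Johansson-Newton}) and since the integral defining $\bls\cdot,\cdot\brs^\circ_{\kappa}$ is the same formula for every such $r$, restriction produces an $R_{\calU}$-bilinear pairing on $H^{\tol,\le h}_{\Par,\kappa_{\calU}}$.

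Next I would upgrade this to an $\scrO_{\calZ_{\calU,h}}$-bilinear pairing valued in $\scrO_{\calZ_{\calU,h}}$. Because $\calZ$ is the zero locus of the Fredholm determinant of $U_p$, the finite $R_{\calU}$-algebra $\scrO_{\calZ_{\calU,h}}$ is generated over $R_{\calU}$ by $U_p$, so the $R_{\calU}$-bilinear pairing is balanced over $\scrO_{\calZ_{\calU,h}}$ as soon as $U_p$ is self-adjoint for it. Self-adjointness comes from Proposition~\ref{Proposition: main property of the pairing}: one has $\bls\bfalpha\cdot\mu_1, \mu_2\brs^\circ_{\kappa} = \bls\mu_1, \bfalpha^{\Shi}\cdot\mu_2\brs^\circ_{\kappa}$ for $\bfalpha\in\Xi$, and a direct computation gives $\bfu_{p,i}^{\Shi} = \bfu_{p,i}$ for every $i$ (each $\bfu_{p,i}$ is diagonal, hence fixed by the transposition and the conjugation entering $\bfalpha\mapsto\bfalpha^{\Shi}$), whence $\bfu_p^{\Shi} = \bfu_p$; likewise $\bflambda^{\Shi}\in\Iw^+_{\GSp_{2g}}$ for $\bflambda\in\Iw^+_{\GSp_{2g}}$, so the double-coset operators $U_{p,i}$, and hence $U_p$, are self-adjoint for the cup product pairing of Proposition~\ref{Proposition: well-defined pairing on the parabolic cohomology}. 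The operators away from $p$ act trivially on $D^r_{\kappa_{\calU}}$ and their adjoint for the cup product is the transpose Hecke correspondence, which represents the same element of $\bbT^p$ (the elementary-divisor data of $\bfdelta\in\GSp_{2g}(\Q_q)\cap M_{2g}(\Z_q)$ is stable under transposition, and $\left(\begin{smallmatrix}& -\oneanti_g\\ \oneanti_g & \end{smallmatrix}\right)\in\GSp_{2g}(\Z_q)$); combined with the self-adjointness of the $U_{p,i}$, this gives $\bls Tm, m'\brs = \bls m, Tm'\brs$ for every $T\in\bbT$, i.e. $\bbT$-equivariance. Finally, $\calZ_{\calU,h}\to\calU$ is finite flat and, being cut out by a single polynomial inside $\bbB_{\calU,h}\subset\bbA^1_{\calU}$, a relative complete intersection; hence $\omega := \Hom_{R_{\calU}}(\scrO_{\calZ_{\calU,h}}, R_{\calU})$ is an invertible $\scrO_{\calZ_{\calU,h}}$-module, and by the adjunction $\Hom_{R_{\calU}}(-, R_{\calU}) \simeq \Hom_{\scrO_{\calZ_{\calU,h}}}(-, \omega)$ the balanced pairing corresponds to an $\scrO_{\calZ_{\calU,h}}$-bilinear pairing valued in $\omega$; a choice of trivialisation $\omega\simeq\scrO_{\calZ_{\calU,h}}$ (a relative fundamental class) then yields $H^{\tol,\le h}_{\Par,\kappa_{\calU}}\times H^{\tol,\le h}_{\Par,\kappa_{\calU}}\to\scrO_{\calZ_{\calU,h}}$, the residual ambiguity in the choice being the ``unit in the eigenalgebra'' of the main results.

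It remains to glue over $\Cov_{\sd}(\calZ)$ and to descend to $\calE_0$. For $\calV\subset\calU$ rational with $(\calU,h)$ and $(\calV,h)$ both slope data, the isomorphism $D^\dagger_{\kappa_{\calU}}(\T_0, R_{\calU})\widehat{\otimes}_{R_{\calU}}R_{\calV}\simeq D^\dagger_{\kappa_{\calV}}(\T_0, R_{\calV})$ used in Proposition~\ref{Proposition: parabolic cohomolgies gives coherent sheaves on the Fredholm hypersurface} carries $\bls\cdot,\cdot\brs^\circ_{\kappa_{\calU}}$ to $\bls\cdot,\cdot\brs^\circ_{\kappa_{\calV}}$ (the defining integral is unchanged, with $e^{\hst}_{\kappa_{\calV}}$ the base change of $e^{\hst}_{\kappa_{\calU}}$), and the cup product commutes with flat base change; with Proposition~\ref{Proposition: parabolic cohomolgies gives coherent sheaves on the Fredholm hypersurface} this gives $\bls\cdot,\cdot\brs_{\kappa_{\calU}}\otimes_{R_{\calU}}R_{\calV} = \bls\cdot,\cdot\brs_{\kappa_{\calV}}$, so the local pairings glue to a morphism $\scrH^{\tol}_{\Par}\times\scrH^{\tol}_{\Par}\to\scrO_{\calZ}$ of coherent sheaves. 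Since $\calE_0 = \Spa_{\calZ}(\scrT_{\!\!\Par}, \scrT_{\!\!\Par}^\circ)$, one has $\pi^{\calE_0, *}_{\calZ}\scrH^{\tol}_{\Par} = \scrH^{\tol}_{\Par}\otimes_{\scrO_{\calZ}}\scrO_{\calE_0}$, and extending scalars along $\scrO_{\calZ}\to\scrO_{\calE_0}$ using the $\scrO_{\calZ}$-bilinearity just obtained gives the pairing on $\calE_0$.

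The step I expect to be the main obstacle is the self-adjointness of $U_p$: one must reconcile the distribution-level adjunction $\bfalpha\mapsto\bfalpha^{\Shi}$ of Proposition~\ref{Proposition: main property of the pairing} with the geometric side of the Hecke double-coset action on the Borel--Serre complex --- choosing the coset representatives $\bfdelta_{i,j} = \bflambda_{i,j}\bfu_{p,i}$ compatibly, verifying that right translation by them on the Borel--Serre chains matches the pairing, and tracking the similitude character $\varsigma$ (through the identity $\bfalpha^{\Shi} = \varsigma(\bfalpha)\,\w_p\bfalpha^{-1}\w_p^{-1}$, which also exhibits our pairing as an Atkin--Lehner twist of the symplectic pairing of Remark~\ref{Remark: symplecti pairing}) so that no spurious unit creeps in. A secondary point of care is that the local trivialisations of the relative dualising module of $\calZ_{\calU,h}/\calU$ be chosen compatibly on overlaps, which is where the Gorenstein property of the Fredholm hypersurface is really used.
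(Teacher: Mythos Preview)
Your overall architecture (construct the pairing on each $H^{\tol,\le h}_{\Par,\kappa_{\calU}}$, check Hecke-equivariance, glue, then pull back to $\calE_0$) matches the paper's. But the crucial step, self-adjointness of the $U_{p,i}$, does not go through the way you propose.

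You invoke Proposition~\ref{Proposition: main property of the pairing} with $\bfalpha=\bfu_{p,i}$ and compute $\bfu_{p,i}^{\Shi}=\bfu_{p,i}$. This fails on two counts. First, $\bfu_{p,i}\notin\Xi$ for $i\ge 1$: the top-left block $\bfu_{p,i}^{\square}=\diag(\one_{g-i},p\one_i)$ is not a unit, hence not in $\Iw_{\GL_g}^+$, so the proposition does not apply. Second, and more fundamentally, the action of $\bfu_{p,i}$ on $D_{\kappa}^r$ used to define the Hecke operator at $p$ is \emph{not} the $\Xi$-action of Proposition~\ref{Proposition: main property of the pairing}; it is the normalized conjugation action
\[
\bfu_{p,i}\cdot(\bfgamma_0,\bfupsilon_0)\bfbeta=(\bfu_{p,i}^{\square}\bfgamma_0\bfu_{p,i}^{\square,-1},\ \bfu_{p,i}^{\blacksquare}\bfupsilon_0\bfu_{p,i}^{\square,-1})\bfbeta
\]
introduced in the paragraph ``Hecke operator at $p$''. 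The adjunction formula $\bfalpha\mapsto\bfalpha^{\Shi}$ says nothing about this action. The paper instead carries out a direct computation specific to the conjugation action: the key step is the identity $e_{\kappa}^{\hst}(M\,\bfu_{p,i}^{\square,-1})=e_{\kappa}^{\hst}(\bfu_{p,i}^{\square,-1}M)$, valid because $\bfu_{p,i}^{\square}$ is diagonal and $e_{\kappa}^{\hst}$ is built from determinants of leading principal minors (this is the ``nature of determinants'' line in the paper's proof). You should reproduce this computation; Proposition~\ref{Proposition: main property of the pairing} alone will not give it.

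Separately, your upgrade of the target from $R_{\calU}$ to $\scrO_{\calZ_{\calU,h}}$ via the relative dualizing module is not what the paper does and is not needed for the application. The paper keeps the pairing valued in $R_{\calU}$; the statement of the corollary should be read as a pairing of sheaves on $\calZ$ that is $\wt_{\calZ}^{-1}\scrO_{\calW}$-bilinear and $\bbT$-balanced (which is exactly what is used in \S4.2, where the pairing on $\eta_{\calV}H^{\tol,\le h}_{\Par,\kappa_{\calU}}$ lands in $R_{\calY}$, not $R_{\calV}$). Your approach introduces a choice of trivialization of $\omega_{\calZ_{\calU,h}/\calU}$ on each piece, and you correctly worry about compatibility on overlaps; the paper avoids this entirely.
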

\begin{proof}
First of all, we claim that the pairing $\bls\cdot, \cdot\brs_{\kappa}^{\circ}$ is $\bfu_{p, i}$-equivariant for any $i=0, 1, ..., g-1$ and for any $p$-adic weight $\kappa: T_{\GL_g, 0}\rightarrow R$. Take any $\mu_1, \mu_2\in D_{\kappa}^{\dagger}$, we have
\begin{align*}
    \bls \bfu_{p, i}\cdot & \mu_1, \mu_2\brs_{\kappa}^{\circ} \\
    & = \scalemath{1}{\int_{\T_{00}^2} e_{\kappa}^{\hst}\left(\begin{pmatrix}\trans\bfgamma_2 & \trans\bfupsilon_2\end{pmatrix} \begin{pmatrix}\one_g\\ &  p^{-1}\one_g\end{pmatrix}\begin{pmatrix}\bfgamma_1\\ \bfupsilon_1\end{pmatrix}\right)\quad d\bfu_{p,i}\cdot\mu_1(\bfgamma_1, \bfupsilon_1)d\mu_2(\bfgamma_2, \bfupsilon_2)}\\
    & = \scalemath{1}{\int_{\T_{00}}e_{\kappa}^{\hst}\left(\trans\bfgamma_2\bfgamma_1+\trans\bfupsilon_2\bfupsilon_1/p\right)\quad d\bfu_{p, i}\cdot\mu_1(\bfgamma_1, \bfupsilon_1)d\mu_2(\bfgamma_2, \bfupsilon_2)}\\
    & = \scalemath{1}{\int_{\T_{00}^2}e_{\kappa}^{\hst}\left(\trans\bfgamma_2 (\bfu_{p,i}^{\square}\bfgamma_1\bfu_{p,i}^{\square, -1}) + \trans\bfupsilon_2(\bfu_{p,i}^{\blacksquare} \bfupsilon_1\bfu_{p, i}^{\square, -1})/p\right)\quad d\mu_1(\bfgamma_1, \bfupsilon_1)d\mu_2(\bfgamma_2, \bfupsilon_2)}\\
    & = \scalemath{1}{\int_{\T_{00}^2} e_{\kappa}^{\hst}\left((\trans\bfgamma_2\bfu_{p,i}^{\square}\bfgamma_1+\trans\bfupsilon_2\bfu_{p,i}^{\blacksquare}\bfupsilon_1/p)\bfu_{p, i}^{\square, -1}\right)\quad d\mu_1(\bfgamma_1, \bfupsilon_2)d\mu_2(\bfgamma_2, \bfupsilon_2)}\\
    & = \scalemath{1}{\int_{\T_{00}}e_{\kappa}^{\hst} \left(\bfu_{p,i}^{\square, -1}\trans\bfgamma_2\bfu_{p, i}^{\square}\bfgamma_1 + \bfu_{p, i}^{\square, -1}\trans\bfupsilon_{2}\bfu_{p,i}^{\blacksquare}\bfupsilon_1/p\right)\quad d\mu_1(\bfgamma_1, \bfupsilon_1)d\mu_2(\bfgamma_2, \bfupsilon_2)}\\
    & = \scalemath{1}{\int_{\T_{00}}e_{\kappa}^{\hst}\left(\trans(\bfu_{p,i}^{\square}\bfgamma_2\bfu_{p,i}^{\square, -1})\bfgamma_1 + \trans(\bfu_{p,i}^{\blacksquare}\bfupsilon_2\bfu_{p, i}^{\square, -1})\bfupsilon_1/p\right) \quad d\mu_1(\bfgamma_1, \bfupsilon_1)d\mu_2(\bfgamma_2, \bfupsilon_2)}\\
    & = \bls \mu_1, \bfu_{p,i}\cdot \mu_2\brs_{\kappa}^{\circ},
\end{align*} where the antepenultimate equation follows from the nature of determinants (which was used in the definition of $e_{\kappa}^{\hst}$). \\

This claim then implies that we have a $U_{p, i}$-equivariant pairing $$\bls\cdot, \cdot\brs_{\kappa_{\calU}}: H_{\Par, \kappa_{\calU}}^{\tol, \leq h}\times H_{\Par, \kappa_{\calU}}^{\tol, \leq h}\rightarrow R_{\calU}.$$ Thus, by gluing, one obtains the first desired pairing. It is furthermore $\bbT^p$-equivariant since the Hecke operators outside $p$ acts on the ananlytic distributions trivially. The second one follows immediately.
\end{proof}

\section{The ramification locus of the cuspidal eigenvariety} In this section, we apply our pairing to the study of the ramification locus of the cuspidal eigenvariety for $\GSp_{2g}$. We will first set up a formalism by following the strategy in \cite{Bellaiche-eigenbook}. Then, the main results of this paper are proven in Theorem \ref{Theorem: ramification locus}, Theorem \ref{Theorem: vanishing order and the ramification index} and Corollary \ref{Corollary: main result of the paper}.

\subsection{Some commutative algebra} In this subsection, we recollect some ingredients of commutative algebra from \cite{Bellaiche-eigenbook}. \\

Let $A$ be a noetherian domain and $B$ be a finite flat $A$-algebra. Consider $$\mult: B\otimes_A B\rightarrow B, \quad b\otimes b'\mapsto bb'$$ and write $\multideal = \ker(\mult)$. Let $$(B\otimes_A B)[\multideal] : = \{x\in B\otimes_A B: y\cdot x= 0 \,\,\forall y\in \multideal\},$$ then the \textit{\textbf{Noether's different}} of $B$ over $A$ is defined to be the ideal 
\[
    \frakd(B/A) : = \image\left((B\otimes_A B)[\multideal] \xrightarrow{\mult} B\right)
\] in $B$.

\begin{Theorem}[Auslander--Buchsbaum]
A prime ideal $\frakP$ of $B$ is ramified over $A$ if and only if $\frakd(B/A)\subset \frakP$. Equivalently, $\Spec B/\frakd(B/A)$ is the ramification locus of $\Spec B$ over $\Spec A$.
\end{Theorem}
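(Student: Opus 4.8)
The goal is to show that, as a subset of $\Spec B$, the ramification locus of $\Spec B\to\Spec A$ is exactly $V(\frakd(B/A))$. Since ``$\frakP$ is ramified over $A$'' means $(\Omega_{B/A})_{\frakP}\neq 0$, and since the scheme-theoretic reformulation $\Spec(B/\frakd(B/A))$ follows formally once the underlying point sets coincide, the plan is to prove the equality of closed sets $\operatorname{Supp}_B\Omega_{B/A}=V(\frakd(B/A))$ by establishing the two inclusions separately. Throughout I would write $C:=B\otimes_A B$ and $I:=\multideal=\ker(\mult)$, and use the standard facts that $C$ is noetherian (as $A$ is noetherian and $B$ is module-finite over $A$), that $I$ is generated as an ideal of $C$ by the elements $b\otimes 1-1\otimes b$, that $\Omega_{B/A}\cong I/I^{2}$ is generated over $B$ by the classes $db:=[\,b\otimes 1-1\otimes b\,]$, that the $B$-module structures on $I/I^{2}$ induced by the two factors agree, and that $(B\otimes_A B)[\multideal]=\operatorname{Ann}_C(I)$.

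The first inclusion amounts to $\frakd(B/A)\cdot\Omega_{B/A}=0$. Given $\theta\in\operatorname{Ann}_C(I)$, set $u:=\mult(\theta)$; then $\theta-u\otimes 1\in\ker(\mult)=I$, so multiplying out $\theta\cdot(b\otimes 1-1\otimes b)=0$ and reducing modulo $I^{2}$ (using $I\cdot I\subseteq I^{2}$) leaves $(u\otimes 1)(b\otimes 1-1\otimes b)\in I^{2}$ for every $b\in B$. As $u\otimes 1$ induces multiplication by $u$ on $I/I^{2}$, this says exactly $u\cdot db=0$ in $\Omega_{B/A}$, and since the $db$ generate $\Omega_{B/A}$ we get $u\cdot\Omega_{B/A}=0$. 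Hence $\frakd(B/A)\subseteq\operatorname{Ann}_B(\Omega_{B/A})$, so $\operatorname{Supp}_B\Omega_{B/A}=V(\operatorname{Ann}_B\Omega_{B/A})\subseteq V(\frakd(B/A))$; equivalently, every ramified prime contains $\frakd(B/A)$.

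For the reverse inclusion I would argue by contraposition: assuming $(\Omega_{B/A})_{\frakP}=0$, I want to produce an element of $\frakd(B/A)$ lying outside $\frakP$. Let $\frakQ:=\mult^{-1}(\frakP)\subset C$ be the diagonal prime over $\frakP$; then $I\subseteq\frakQ$, and under $C/I\cong B$ the localization $(C/I)_{\frakQ}$ is identified with $B_{\frakP}$, so localizing the $C$-module $I/I^{2}=\Omega_{B/A}$ at $\frakQ$ recovers $(\Omega_{B/A})_{\frakP}=0$. Thus $I_{\frakQ}=I_{\frakQ}^{2}$, and since $I_{\frakQ}$ is finitely generated over the noetherian local ring $C_{\frakQ}$ with $I_{\frakQ}\subseteq\frakQ C_{\frakQ}$, Nakayama forces $I_{\frakQ}=0$. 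Because $I$ is finitely generated this gives $\operatorname{Ann}_C(I)\not\subseteq\frakQ$; choosing $s\in\operatorname{Ann}_C(I)$ with $s\notin\frakQ$, the element $\mult(s)\in\frakd(B/A)$ satisfies $\mult(s)\notin\frakP$. This is the contrapositive of ``$\frakd(B/A)\subseteq\frakP\Rightarrow\frakP$ ramified''. Combining the two directions gives the equivalence ``$\frakP$ is ramified $\iff\frakd(B/A)\subseteq\frakP$'', whence the ramification locus equals $V(\frakd(B/A))=\Spec(B/\frakd(B/A))$. (The finite flatness of $B$ over $A$ is not strictly needed for this set-level statement; it enters the broader picture in ensuring that $\frakd(B/A)$ has good base-change behaviour and agrees up to radical with the other classical differents, so that $V(\frakd(B/A))$ carries the expected subscheme structure.)

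The main obstacle is the reverse inclusion — extracting a ``separability element'' of $\frakd(B/A)$ away from an unramified prime. The point that makes it short is the observation that unramifiedness of $B$ over $A$ at $\frakP$ is equivalent to the vanishing of the diagonal ideal $I$ after localizing $C$ at the diagonal prime $\frakQ$ above $\frakP$ — a one-line Nakayama argument once $\frakQ$ and the identification $(C/I)_{\frakQ}\cong B_{\frakP}$ are in place — from which an annihilator of $I$ outside $\frakQ$, hence an element of $\frakd(B/A)$ outside $\frakP$, drops out automatically. By contrast, the first inclusion is a routine manipulation of the conormal sequence, and the passage from the prime-by-prime equivalence to the assertion about $\Spec(B/\frakd(B/A))$ is purely formal.
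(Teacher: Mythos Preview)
Your argument is correct and complete. The paper itself does not give a proof of this theorem at all: it simply writes ``See \cite[Theorem 2.7]{Auslander-Buchsbaum}'' and moves on. So there is no ``paper's own approach'' to compare against beyond the original reference.

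What you have written is essentially the standard proof of this fact. Both inclusions are handled cleanly: the first via the observation that any $\theta\in\operatorname{Ann}_C(I)$ differs from $\mult(\theta)\otimes 1$ by an element of $I$, whence $\mult(\theta)$ kills $I/I^2$; the second via the identification $(I/I^2)_{\frakQ}\cong(\Omega_{B/A})_{\frakP}$ for the diagonal prime $\frakQ=\mult^{-1}(\frakP)$, followed by Nakayama on the finitely generated ideal $I_{\frakQ}\subseteq\frakQ C_{\frakQ}$ satisfying $I_{\frakQ}=I_{\frakQ}^2$. Your remark that finite flatness is not needed for the set-theoretic statement is also accurate; the hypotheses in the paper (noetherian domain $A$, finite flat $B$) are stronger than necessary here and are really there for the subsequent applications.
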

\begin{proof}
See \cite[Theorem 2.7]{Auslander-Buchsbaum}.
\end{proof}

Suppose $M, N$ are two $B$-modules which are finite flat over $A$ and assume we are in the following situation:\begin{enumerate}
    \item[$\bullet$] There exists an $A$-linear pairing $$\beta: M\times N \rightarrow A$$ such that $\beta$ is $B$-equivariant. 
    
    \item[$\bullet$] We have isomorphisms $M\simeq N\simeq B^{\vee}:= \Hom_{A}(B, A)$ of $B$-modules.  
\end{enumerate}

\begin{Lemma}[\text{\cite[Proposition VIII.1.11]{Bellaiche-eigenbook}}]\label{Lemma: commutaitve algebra lemma for the principal-ness of the adjoint L-ideal}
Denote by $\beta_{B}$ the base change of $\beta$ to $B$ on $M\otimes_A B\times N\otimes_A B$. Let \begin{align*}
    (M\otimes_A B)[\multideal] & = \{x \in M\otimes_A B: y\cdot x=0\,\,\forall y\in \multideal\}\\
    (N\otimes_A B)[\multideal] & = \{ x \in N\otimes_A B: y\cdot x=0\,\,\forall y\in \multideal\}.
\end{align*} Then the ideal $$\frakL_{\beta}:=\image\left(\beta_B: (M\otimes_A B)[\multideal]\times (N\otimes_A B)[\multideal]\rightarrow B\right)$$ is a principal ideal in $B$.
\end{Lemma}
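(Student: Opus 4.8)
The plan is to reduce the whole statement to a single clean observation: after invoking the isomorphisms $M\simeq N\simeq B^\vee$, the submodules $(M\otimes_A B)[\multideal]$ and $(N\otimes_A B)[\multideal]$ are free of rank one over $B$, and the restriction of $\beta_B$ to them is a $B$-bilinear map $B\times B\to B$, whose image is automatically principal, being generated by the value at $(1,1)$.

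First I would record that, since $A$ is noetherian and $B$ is finite flat (hence finite projective) over $A$, the canonical map $B^\vee\otimes_A B\xrightarrow{\sim}\End_A(B)$, $f\otimes b\mapsto\bigl(x\mapsto f(x)b\bigr)$, is an isomorphism of $B\otimes_A B$-modules, where the left (resp.\ right) tensor factor of $B\otimes_A B$ acts on $\End_A(B)$ by precomposition with right multiplication (resp.\ postcomposition with left multiplication). Because $\multideal$ is generated by the elements $b\otimes 1-1\otimes b$, a direct computation then shows that $\psi\in\End_A(B)$ is annihilated by $\multideal$ exactly when $\psi(xb)=b\,\psi(x)$ for all $x,b\in B$, i.e.\ exactly when $\psi$ is multiplication by the scalar $\psi(1)\in B$. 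Hence $(\End_A(B))[\multideal]$ is the module of homotheties, free of rank one over $B=(B\otimes_A B)/\multideal$ with generator $\id_B$; transporting through $M\simeq B^\vee$ and $N\simeq B^\vee$ produces canonical generators $\vartheta_M\in(M\otimes_A B)[\multideal]$ and $\vartheta_N\in(N\otimes_A B)[\multideal]$, namely the images of the coevaluation element $\sum_i f_i\otimes e_i$. I would emphasise that this step uses only that $M$ and $N$ are isomorphic to $B^\vee$ as $B$-modules, and works with no Gorenstein hypothesis on $B$.

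Next I would compute $\beta_B$ on these torsion pieces. By construction $\beta_B(m\otimes b,\ n\otimes b')=\beta(m,n)\,bb'$, so $\beta_B$ is $B$-linear in the tensor factor of each argument, while the $B$-equivariance of $\beta$ says precisely that the two ``first-factor'' $B$-actions (the one coming from the $B$-module structure of $M$ and the one coming from that of $N$) are adjoint under $\beta_B$. On the $\multideal$-torsion submodules the first-factor and tensor-factor $B$-actions coincide — that is the very meaning of $\multideal$-torsion — so under the identifications of the previous paragraph the restriction $\beta_B\colon(M\otimes_A B)[\multideal]\times(N\otimes_A B)[\multideal]\to B$ becomes a $B$-bilinear map $B\times B\to B$. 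Every such map has the form $(c,c')\mapsto cc'\lambda$ with $\lambda:=\beta_B(\vartheta_M,\vartheta_N)\in B$, so its image is $B\lambda$ (take $c'=1$); therefore $\frakL_\beta=(\lambda)$ is principal. As a by-product this pins down $\lambda$, well-defined up to a unit once the isomorphisms with $B^\vee$ are fixed, which is exactly the element one wants to call an adjoint $L$-invariant.

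The part I expect to require the most care is organisational rather than conceptual: keeping straight the two a priori different $B$-module structures on $M\otimes_A B$ (one via $M$, one via the tensor factor), verifying that $B^\vee\otimes_A B\simeq\End_A(B)$ is equivariant for the $B\otimes_A B$-structures actually in play, and using the isomorphism $M\simeq B^\vee$ as one of $B$-modules so that it carries $\multideal$-torsion to $\multideal$-torsion and the coevaluation element to a $B$-module generator. Once those identifications are nailed down, the principality of $\frakL_\beta$ is immediate and no genuinely hard step remains.
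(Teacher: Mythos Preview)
Your proposal is correct and follows essentially the same approach as the paper. The paper establishes the general isomorphism $(M^\vee\otimes_A B)[\multideal]\simeq\Hom_B(M,B)$ via $\psi\otimes b\mapsto(m\mapsto\psi(m)b)$ and then specialises to $M=B$ to obtain $(B^\vee\otimes_A B)[\multideal]\simeq\Hom_B(B,B)\simeq B$; your version, identifying $B^\vee\otimes_A B\simeq\End_A(B)$ and picking out the homotheties as the $\multideal$-torsion, is exactly the same computation phrased slightly more concretely, and your explicit check of $B$-bilinearity of $\beta_B$ on the torsion pieces is a detail the paper leaves implicit.
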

\begin{proof}
We claim first that for any $B$-module $M$ which is finite flat over $A$, we have an isomorphism $M^\vee\otimes_A B[\multideal]\simeq \Hom_B(M, B)$, where $M^{\vee}=\Hom_A(M, A)$. Notice that $M^{\vee}$ also admits a $B$-module structure by $b\psi:m\mapsto \psi(bm)$ for all $b\in B$, $\psi\in M^\vee$ and $m\in M$. We have a natural isomorphism $$M^\vee\otimes_A B=\Hom_A(M, A)\otimes_A B\rightarrow \Hom_A(M, B), \quad \psi\otimes b\mapsto (m\mapsto \psi(m)b).$$ Since $\multideal=\sum_{b\in B}(b\otimes 1-1\otimes b)B\otimes_A B$, thus \begin{align*}
    \psi\otimes b\in M^\vee\otimes_A B[\multideal] & \Leftrightarrow (b'\otimes 1-1\otimes b')\psi\otimes b=0\,\,\forall b'\in B\\
    & \Leftrightarrow b'\psi\otimes b=\psi\otimes bb'\,\,\forall b'\in B\\
    & \Leftrightarrow \psi(b'm)b=\psi(m)bb'\,\,\forall b'\in B, m\in M\\
    & \Leftrightarrow (m\mapsto \psi(m)b)\in \Hom_B(M, B).
\end{align*}

Apply the claim in our situation, we have isomorphisms of $B$-modules $$(M\otimes_A B)[\multideal]\simeq (B^{\vee}\otimes_A B)[\multideal]\simeq \Hom_B(B, B)\simeq B$$ and same for $(N\otimes_A B)[\multideal]$. Hence, let $\widetilde{m}$ and $\widetilde{n}$ be generators of $(M\otimes_A B)[\multideal]$ and $(N\otimes_A B)[\multideal]$ respectively as $B$-modules. Then $\frakL_{\beta}=\beta_B(\widetilde{m}, \widetilde{n})B$.
\end{proof}

\begin{Proposition}[\text{\cite[Corollary VIII.1.13]{Bellaiche-eigenbook}}]\label{Proposition: relation of L-ideal and Noether's different}
Suppose $B$ is Gorenstein over $A$, i.e., $B^{\vee}$ is flat of constant rank $1$ over $B$, and $M, N$ are $B$-modules which are finite flat over $A$ and flat of rank 1 over $B$. Assume there is an $A$-linear pairing $\beta: M\times N\rightarrow A$ which is $B$-equivariant. We retain the notation $\beta_B$ and $\frakL_{\beta}$ as in Lemma \ref{Lemma: commutaitve algebra lemma for the principal-ness of the adjoint L-ideal}. Then \begin{enumerate}
    \item Both ideals $\frakd(B/A)$ and $\frakL_{\beta}$ are locally principal. Moreover, there exists $b_0\in B$ such that $\frakL_{\beta}=b_0\frakd(B/A)$.
    \item We have $\frakL_{\beta}=\frakd(B/A)$ if and only if $\beta$ is non-degenerate. 
\end{enumerate}
\end{Proposition}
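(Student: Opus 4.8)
The plan is to reduce the entire statement to the linear‑algebra computation already carried out in Lemma~\ref{Lemma: commutaitve algebra lemma for the principal-ness of the adjoint L-ideal}, where, for any $B$-module $P$ finite flat over $A$, a natural isomorphism of $B$-modules $(P^\vee \otimes_A B)[\multideal] \simeq \Hom_B(P, B)$ was produced. First I would record the preliminary facts: since $B$ is finite over the noetherian ring $A$, any $B$-module which is finite flat over $A$ and flat of rank one over $B$ is finitely generated and flat over the noetherian ring $B$, hence an invertible $B$-module; in particular $M$, $N$ and — by the Gorenstein hypothesis — $B^\vee$ are invertible over $B$, and $B$ is reflexive over $A$, so $B^{\vee\vee} \simeq B$. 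For part (1), applying the isomorphism with $P = B^\vee$ gives $(B \otimes_A B)[\multideal] \simeq \Hom_B(B^\vee, B)$, which is invertible over $B$; since the two $B$-actions on $(B \otimes_A B)[\multideal]$ agree and $\mult$ is $B$-linear, $\frakd(B/A) = \image(\mult)$ is the image of a $B$-linear map out of an invertible module, hence locally principal. Applying the isomorphism with $P = M^\vee$ (using $M^{\vee\vee} \simeq M$ and $M^\vee = \Hom_A(M,A) \simeq \Hom_B(M, B^\vee)$) gives $(M \otimes_A B)[\multideal] \simeq M \otimes_B \Hom_B(B^\vee, B)$, invertible over $B$, and likewise for $N$; since $\beta_B$ factors through the $B$-tensor product of these two invertible modules, $\frakL_\beta = \image(\beta_B)$ is locally principal. (Equivalently: localise $B$ at a prime and $A$ at its contraction, so $M$, $N$, $B^\vee$ become free of rank one, and quote Lemma~\ref{Lemma: commutaitve algebra lemma for the principal-ness of the adjoint L-ideal} verbatim.)

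Next I would produce the factorisation. The $B$-equivariance $\beta(bm, n) = \beta(m, bn)$ means $\beta$ descends to an $A$-linear map $M \otimes_B N \to A$, equivalently — via $\Hom_A(-, A) \simeq \Hom_B(-, B^\vee)$ on $B$-modules — to a $B$-linear map $\widetilde\beta \colon M \otimes_B N \to B^\vee$ of invertible $B$-modules. Unwinding the identifications of the previous paragraph, one checks that $\beta_B$ is the composite of the multiplication map $\mult \colon \Hom_B(B^\vee, B) \to B$ (whose image is $\frakd(B/A)$) with the map induced by $\widetilde\beta$; hence $\frakL_\beta = \frakj \cdot \frakd(B/A)$, where $\frakj \subseteq B$ is the image ideal of $\widetilde\beta$ (the image of $M \otimes_B N \otimes_B \Hom_B(B^\vee, B) \to B$), and in particular $\frakL_\beta \subseteq \frakd(B/A)$. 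The ideal $\frakj$ is locally principal, and is principal in the situations relevant here (where $B$ is semilocal, so its Picard group is trivial), or in general by a local‑to‑global argument; writing $\frakj = b_0 B$ then gives $\frakL_\beta = b_0\,\frakd(B/A)$, which completes (1).

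For part (2), $\beta$ is non-degenerate exactly when the adjoint maps $M \to \Hom_A(N, A)$ and $N \to \Hom_A(M, A)$ are isomorphisms, and the first of these is precisely $\widetilde\beta$ under $\Hom_A(N,A) \simeq \Hom_B(N, B^\vee)$; so non-degeneracy is equivalent to $\widetilde\beta$ being an isomorphism. Since a surjection of invertible $B$-modules is automatically an isomorphism, this is equivalent to $\widetilde\beta$ being surjective, i.e. to $\frakj = B$, i.e. to $b_0 \in B^\times$, i.e. to $\frakL_\beta = \frakd(B/A)$.

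The hard part will be the verification in the second paragraph that $\beta_B$ is literally $\mult$ twisted by $\widetilde\beta$: this is the core of the argument and requires carefully tracking the several occurrences of the isomorphism $(P^\vee \otimes_A B)[\multideal] \simeq \Hom_B(P, B)$ and their compatibility with the multiplication map and with the base change of $\beta$ to $B$. A secondary (and, for the intended application, essentially vacuous) point is to confirm that the image ideal $\frakj$ of $\widetilde\beta$ is genuinely principal, so that $b_0$ may be chosen in $B$.
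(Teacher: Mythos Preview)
Your proposal is correct and follows the same underlying idea as the paper --- both arguments hinge on the isomorphism $(P^\vee\otimes_A B)[\multideal]\simeq\Hom_B(P,B)$ established in Lemma~\ref{Lemma: commutaitve algebra lemma for the principal-ness of the adjoint L-ideal} --- but you package it more invariantly. The paper immediately localises so that $M\simeq N\simeq B^\vee\simeq B$ are free of rank one, then writes everything in terms of an explicit $A$-basis $b_1,\dots,b_n$ of $B$: it identifies $\beta$ with the element $b=\sum_{i,j}\beta(b_i^\vee,b_j^\vee)\,b_i\otimes b_j\in B\otimes_A B$, shows $\frakL_\beta=\mult(b)B$, takes a generator $\widetilde b^{\square}$ of $(B\otimes_A B)[\multideal]$, and defines $b_0$ by $b=b_0\widetilde b^{\square}$. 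Your route keeps $M$, $N$, $B^\vee$ as abstract invertible $B$-modules, recasts $\beta$ as the $B$-linear map $\widetilde\beta:M\otimes_B N\to B^\vee$, and reads off the factorisation $\frakL_\beta=\frakj\cdot\frakd(B/A)$ with $\frakj$ the image ideal of $\widetilde\beta$; this has the advantage that part~(2) becomes the clean statement ``$\widetilde\beta$ is an isomorphism of invertible modules iff it is surjective iff $\frakj=B$'', whereas the paper's chain of equivalences for~(2) is somewhat compressed. The trade-off is exactly the one you flag: you must carefully check that under all the identifications $\beta_B$ really is $\mult$ twisted by $\widetilde\beta$, which the paper sidesteps by brute computation in coordinates.
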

\begin{proof}
We are in a special case of Lemma \ref{Lemma: commutaitve algebra lemma for the principal-ness of the adjoint L-ideal} that we can identify (locally) $M\simeq N\simeq B^{\vee}\simeq B$ and hence we know $\frakL_{\beta}$ is principal. Moreover, the identification $B\otimes_AB[\multideal]\simeq \Hom_B(B, B)\simeq B$ implies that $\frakd(B/A)$ is also principal. \\

Observe that we can identify $\beta:M\times N\rightarrow A$ as a linear morphism $B^\vee\otimes_A B^\vee\rightarrow A$. Hence by duality, we identify $\beta$ with an element $b\in B\otimes_A B$. We claim that $\frakL_{\beta}=\mult(b)B$. As we are working locally, we assume $b_1, ..., b_n$ is a basis of $B$ over $A$, then $b_1^{\vee}, ..., b_n^{\vee}$ is a basis of $B^{\vee}$ over $A$. Observe that $b^{\square}:=\sum_{i}b_i^{\vee}\otimes b_i$ is a generator of $B^{\vee}\otimes_A B[\multideal]\simeq \Hom_{B}(B, B)$ as it maps to the identity in $\Hom_B(B, B)$. Hence by definition $$\frakL_{\beta}=\beta_B(b^{\square}, b^{\square})B=\left(\sum_{i,j}\beta(b_i^{\vee}, b_j^{\vee})b_ib_j\right)B.$$ On the other hand, by the above construction, we see that $b=\sum_{i,j}\beta(b_i^{\vee}, b_j^{\vee})b_i\otimes b_j$ with $\mult(b)=\sum_{i,j}\beta(b_i^{\vee}, b_j^{\vee})b_ib_j$.\\

Let $\widetilde{b}^{\square}=\sum_{i}b_i\otimes b_i$, then it is a generator of $B\otimes_A B[\multideal]\simeq B$. Thus, there exists $b_0\in B$ such that $b_0\widetilde{b}^{\square}=b$. We conclude that $$\frakL_{\beta}=\mult(b)B= \mult(b_0\widetilde{b}^{\square})B=b_0\mult(\widetilde{b}^{\square})B=b_0\frakd(B/A).$$ Finally, we have \begin{align*}
    \frakL_{\beta}=\frakd(B/A) & \Leftrightarrow b_0\in B^\times\\ 
    & \Leftrightarrow \beta(b_i^{\vee}, b_j^{\vee})=\left\{\begin{array}{ll}
        b_0\in B^\times & i=j \\
        0 & i\neq j
    \end{array}\right.\\
    & \Leftrightarrow \beta\text{ is non-degenerate}.
\end{align*}
\end{proof}

\subsection{The ramification locus of the cuspidal eigenvariety} Recall the weight map $\wt: \calE_0\rightarrow \calW$ and $\pi^{\calE_0}_{\calZ}: \calE_0\rightarrow \calZ$. For each slope datum $(\calU, h)$, let $\calE_0^{\calU, h}:=(\pi^{\calE_0}_{\calZ})^{-1}(\calZ_{\calU, h})$. We adapt the definitions of ``clean neighbourhoods'' and ``good points'' in \cite{Bellaiche-eigenbook} in our situation:

\begin{Definition}\label{Definition: clean neighbourhood and good points}
\begin{enumerate}
    \item Let $\bfx\in \calE_0$ and $\calV=\Spa(R_{\calV}, R_{\calV}^+)$ be an open affinoid neighbourhood of $\bfx$. We say $\calV$ is a \textbf{clean neighbourhood} of $\bfx$ if it satisfies the following properties:\begin{enumerate}
        \item[$\bullet$] $\wt(\calV)=\calY=\Spa(R_{\calY}, R_{\calY}^+)\subset \calW$ is an open affinoid subset of $\calW$ and there exists a slope datum $(\calU, h)$ for $\calZ$ such that $\calV$ is the connected component of $\bfx$ in $\calE_0^{\calU, h}$;
        \item[$\bullet$] $\bfx$ is the only point of $\calV$ sitting above $\wt(\bfx)$;
        \item[$\bullet$] the map $\wt: \calV\rightarrow \calY$ is flat and is moreover \'{e}tale except perhaps at $\bfx$.
    \end{enumerate} In this case, there exists an idempotent $\eta=\eta_{\calV}\in \bbT_{\Par}^{\calU, h}$ such that $\calV$ is defined by the equation $\eta=1$ and the module $\eta H_{\Par, \kappa_{\calU}}^{\tol, \leq h}$ is a direct summand of $H_{\Par, \kappa_{\calU}}^{\tol, \leq h}$.
    \item A point $\bfx\in \calE_0$ is said to be a \textbf{good point} if it admits a sufficiently small clean neighbourhood $\calV$ with $\wt(\calV)=\calY$ such that the modules $\eta_{\calV}H_{\Par, \kappa_{\calU}}^{\tol, \leq h}$ and $(\eta_{\calV} H_{\Par, \kappa_{\calU}}^{\tol, \leq h})^{\vee}$ are free of rank one over $R_{\calV}$, where the dual is taken to be an $R_{\calY}$-dual.
\end{enumerate}
\end{Definition}

\begin{Remark}\label{Remark: don't know if the eigenvariety is flat over the weight space}
\normalfont We remark the following: \begin{enumerate}
    \item In the $\GL_2$ case, the eigencurve is finite flat over the weight space (\cite[\S VI.1.4]{Bellaiche-eigenbook}) and so the author of \textit{op. cit.} can consequently deduce that the collection of clean neighbourhoods of points on the eigencurve gives a open cover of the eigencurve. In our case, the Fredholm hypersurface $\calZ$ is finite flat over $\calW$ by \cite[Theorem B.1]{AIP-2018}. However, we don't know if $\calE_0$ is flat over $\calZ$. Therefore, instead of considering $\calE_0$, we consider $\calE_0^{\fl}\subset \calE_0$ the flat locus over $\calW$, which is open over $\calW$, and let $\Cov_{\cl}(\calE_0^{\fl})$ be the open cover of clean neighbourhoods. 
    \item In the definition of good points, we see immediately that $R_{\calV}$ is Gorenstein over $R_{\calY}$.
\end{enumerate}
\end{Remark}

Following \cite[\S VIII. 4]{Bellaiche-eigenbook}, we study the \textit{adjoint $L$-ideal} and define the \textit{$p$-adic adjoint $L$-function} here. Let $\bfx\in \calE_0^{\fl}$ and $\calV$ be a clean neighbourhood of $\bfx$ with weight $\wt(\calV)=\calY$. There is a natural multiplication map $$\mult: R_{\calV}\widehat{\otimes}_{R_{\calY}}R_{\calV}\rightarrow R_{\calV}, \quad b\otimes b'\mapsto bb'.$$ Let $\multideal:=\ker\mult$ and define $$M\widehat{\otimes}_{R_{\calY}}R_{\calV}[\multideal]:=\{m\in M\widehat{\otimes}_{R_{\calY}}R_{\calV}: \multideal\cdot m=0\}$$ for any Banach $R_{\calV}$-module $M$.

\begin{Definition}\label{Definition: the adjoint L-ideal}
Keep the above notation. The \textbf{adjoint $L$-ideal} of $\calV$ is defined to be \[
    \scalemath{0.9}{\scrL^{\adj}(\calV):=\image\left(\bls\cdot ,\cdot\brs_{\kappa_{\calU}}: \eta_{\calV}H_{\Par, \kappa_{\calU}}^{\tol, \leq h}\widehat{\otimes}_{R_{\calY}}R_{\calV}[\multideal]\times \eta_{\calV}H_{\Par, \kappa_{\calU}}^{\tol, \leq h}\widehat{\otimes}_{R_{\calY}}R_{\calV}[\multideal]\rightarrow R_{\calV}\right).}
\]
\end{Definition}

\begin{Remark}\label{Remark: adjoint L-ideal as a sheaf}
\normalfont Since the clean neighbourhoods cover $\calE_0^{\fl}$, the collection $\{\scrL^{\adj}(\calV): \calV\in \Cov_{\cl}(\calE_0^{\fl})\}$ glues to a coherent sheaf $\scrL^{\adj}$ on $\calE_0^{\fl}$.
\end{Remark}

\begin{Proposition}\label{Proposition: adjoint L-dieal at a good point is principal}
Let $\bfx\in \calE_0^{\fl}$ be a good point. Then there exists a sufficiently small clean neighbourhood $\calV$ of $\bfx$ with $\wt(\calV)=\calY$ such that $\scrL^{\adj}(\calV)$ is a principal ideal in $R_{\calV}$. 
\end{Proposition}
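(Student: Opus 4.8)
Following \cite[\S VIII.4]{Bellaiche-eigenbook}, the plan is to place ourselves in the situation of Lemma~\ref{Lemma: commutaitve algebra lemma for the principal-ness of the adjoint L-ideal} with $A=R_{\calY}$, $B=R_{\calV}$, $M=N=\eta_{\calV}H_{\Par, \kappa_{\calU}}^{\tol, \leq h}$ and $\beta=\bls\cdot,\cdot\brs_{\kappa_{\calU}}$ the pairing of Corollary~\ref{Corollary: pairing on the eigenvariety}, and then read off principality of $\scrL^{\adj}(\calV)=\frakL_{\beta}$. The first move is a reduction allowing one to take $\calU=\calY$. Since the weight space $\calW$ is locally an analytic adic open polydisc over $\Z_p$, it admits a basis of connected open affinoids whose rings of functions are noetherian domains; shrinking the clean neighbourhood $\calV$ (and hence $\calY=\wt(\calV)$) I may assume that $R_{\calY}$ is such a domain and that $\calY\subset\calU$. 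Then $(\calY,h)$ is again a slope datum, and by the base-change statement of Proposition~\ref{Proposition: parabolic cohomolgies gives coherent sheaves on the Fredholm hypersurface} we have $\eta_{\calV}H_{\Par, \kappa_{\calY}}^{\tol, \leq h}\simeq \eta_{\calV}H_{\Par, \kappa_{\calU}}^{\tol, \leq h}\otimes_{R_{\calU}}R_{\calY}$ compatibly with the pairings, so we may and do replace $(\calU,h)$ by $(\calY,h)$. In particular $A:=R_{\calY}$ is a noetherian domain and $\beta$ takes values in $A$.

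Next I would check the hypotheses of Lemma~\ref{Lemma: commutaitve algebra lemma for the principal-ness of the adjoint L-ideal}. The ring $B:=R_{\calV}$ is finite over $A$ because $\calV$ is a connected component of $\calE_0^{\calY,h}$, which is finite over $\calZ_{\calY,h}$ (coherence of $\scrT_{\!\!\Par}$) and hence over $\calY$ (Definition~\ref{Definition: slope datum}); and $B$ is flat over $A$ because $\bfx\in\calE_0^{\fl}$, so after shrinking $\calV$ into the flat locus (Remark~\ref{Remark: don't know if the eigenvariety is flat over the weight space}) the map $\wt\colon\calV\to\calY$ is flat. Since $\bfx$ is a good point we may arrange moreover that $M:=\eta_{\calV}H_{\Par, \kappa_{\calY}}^{\tol, \leq h}$ and $M^{\vee}:=\Hom_{A}(M,A)$ are free of rank one over $B$; in particular $M$ is finite flat over $A$. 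From the $B$-module isomorphism $M\simeq B$ one gets $B^{\vee}=\Hom_{A}(B,A)\simeq \Hom_{A}(M,A)=M^{\vee}\simeq B$, so $B$ is Gorenstein over $A$ and $M\simeq N\simeq B^{\vee}$ as $B$-modules. Finally, $\beta$ is $\bbT$-equivariant by Corollary~\ref{Corollary: pairing on the eigenvariety}, and $B$ is generated as an $A$-algebra by the image of $\bbT$ acting on $M$ (being the factor of the eigenalgebra $\scrT_{\!\!\Par}$ cut out by $\eta_{\calV}$), so the $A$-bilinear pairing $\beta$ is $B$-equivariant. Hence Lemma~\ref{Lemma: commutaitve algebra lemma for the principal-ness of the adjoint L-ideal} applies and $\frakL_{\beta}$ is a principal ideal of $B=R_{\calV}$.

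It then remains to match $\frakL_{\beta}$ with the ideal $\scrL^{\adj}(\calV)$ of Definition~\ref{Definition: the adjoint L-ideal}. Because $B$ is finite over the noetherian ring $A$, the completed tensor product $M\widehat{\otimes}_{A}R_{\calV}$ occurring there is the ordinary tensor product $M\otimes_{A}B$ with its canonical topology, the submodule killed by $\multideal$ is exactly $(M\otimes_{A}B)[\multideal]$, and the map in Definition~\ref{Definition: the adjoint L-ideal} is the $R_{\calV}$-linear extension $\beta_{B}$ of $\beta$. Therefore $\scrL^{\adj}(\calV)=\frakL_{\beta}$ is principal, which is the assertion. (The same set-up feeds directly into Proposition~\ref{Proposition: relation of L-ideal and Noether's different}, relating $\scrL^{\adj}(\calV)$ to Noether's different $\frakd(R_{\calV}/R_{\calY})$.)

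The step I expect to be the main obstacle is the reduction in the first paragraph: simultaneously arranging that $R_{\calY}$ is a noetherian domain, that $\wt\colon\calV\to\calY$ is finite flat, and that one may take $\calU=\calY$, so that the analytic, $\bbT$-equivariant pairing of Corollary~\ref{Corollary: pairing on the eigenvariety} literally becomes an $A$-linear $B$-equivariant pairing between finite flat $A$-modules of the kind to which the purely algebraic Lemma~\ref{Lemma: commutaitve algebra lemma for the principal-ness of the adjoint L-ideal} applies. Once the neighbourhood has been cleaned up in this way, the principality is essentially formal.
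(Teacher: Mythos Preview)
Your proposal is correct and follows the paper's own approach: the paper's proof is a single sentence invoking Lemma~\ref{Lemma: commutaitve algebra lemma for the principal-ness of the adjoint L-ideal}, and you have simply unpacked the verifications needed to apply that lemma (noetherian domain $A=R_{\calY}$, finite flatness of $B=R_{\calV}$, the identifications $M\simeq N\simeq B^{\vee}$ coming from the good-point hypothesis, $B$-equivariance of the pairing, and the matching $\scrL^{\adj}(\calV)=\frakL_{\beta}$). Your caution about the reduction step is well placed---these details are exactly what the paper suppresses---but nothing in your argument goes beyond what is implicit in the paper's one-line proof.
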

\begin{proof}
The assertion follows from Lemma \ref{Lemma: commutaitve algebra lemma for the principal-ness of the adjoint L-ideal}.
\end{proof}

\begin{Definition}\label{Definition: the adjoint p-adic L-function}
Let $\bfx\in \calE_0^{\fl}$ be a good point and $\calV$ be a sufficiently small clean neighbourhood such that $\scrL^{\adj}(\calV)$ is principal. We define the \textbf{adjoint $p$-adic $L$-function} on $\calV$ to be $L_{\calV}^{\adj}\in R_{\calV}$ such that $L_{\calV}^{\adj}$ generates $\scrL^{\adj}(\calV)$. The value of $L_{\calV}^{\adj}$ at $\bfx$ is denoted by $L^{\adj}(\bfx)$ as it doesn't depend on the clean neighbourhood.
\end{Definition}

\begin{Remark}\label{Remark: adjoint p-adic L-function}
\normalfont We point out that the adjoint $p$-adic $L$-function $L_{\calV}^{\adj}$ is defined up to a unit in $R_{\calV}$. In the case of $\GL_2$, the name ``adjoint $p$-adic $L$-function'' is justified in \cite[Proposition 3.9.2]{Kim} and \cite[\S VIII.5]{Bellaiche-eigenbook}. However, the justification of the name is unknown to us in our situation as discussed in the introduction.  
\end{Remark}

Let $\bfx\in \calE_0^{\fl}$ be a good point and let $\calV$ be a sufficiently small clean neighbourhood of $\bfx$ such that $L_{\calV}^{\adj}$ is defined. Let $(\calU, h)$ be the slope datum that defines $\calV$ and let $\wt(\calV)=\calY$. Corollary \ref{Corollary: pairing on the eigenvariety} yields an $R_{\calV}$-equivariant pairing $$\bls\cdot, \cdot\brs_{\kappa_{\calU}}: \eta_{\calV}H_{\Par, \kappa_{\calU}}^{\tol, \leq h}\times \eta_{\calV}H_{\Par, \kappa_{\calU}}^{\tol, \leq h}\rightarrow R_{\calY}.$$ Together with the definition of good points, we are in the situation of Proposition \ref{Proposition: relation of L-ideal and Noether's different}.

\begin{Theorem}\label{Theorem: ramification locus}
Let $\bfx\in \calE_0^{\fl}$ be a good point and let $\kappa=\wt(\bfx)$. Suppose the pairing $$\bls\cdot, \cdot\brs_{\kappa_{\calU}}: \eta_{\calV}H_{\Par, \kappa_{\calU}}^{\tol, \leq h}\times \eta_{\calV}H_{\Par, \kappa_{\calU}}^{\tol, \leq h}\rightarrow R_{\calY}$$ is non-degenerate at $\wt(\bfx)$, then $$L^{\adj}(\bfx)=0\text{ if and only if }\wt \text{ is ramified at }\bfx.$$
\end{Theorem}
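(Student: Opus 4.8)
The plan is to recast the statement in the commutative-algebra framework of \S4.1 and then combine Proposition~\ref{Proposition: relation of L-ideal and Noether's different} with the Auslander--Buchsbaum theorem. First I would set $A := R_{\calY}$, $B := R_{\calV}$ and $M := \eta_{\calV}H_{\Par, \kappa_{\calU}}^{\tol, \leq h}$, regarded as a $B$-module and, via $\wt$, as an $A$-module, and check that the hypotheses of Proposition~\ref{Proposition: relation of L-ideal and Noether's different} hold: since $\calV$ is a clean neighbourhood, $\wt\colon \calV \to \calY$ is flat, so $B$ is finite flat over $A$, and by Remark~\ref{Remark: don't know if the eigenvariety is flat over the weight space}(2) $B$ is Gorenstein over $A$; since $\bfx$ is a good point, $M$ and its $R_{\calY}$-dual are free of rank one over $B$, so $M$ is finite flat over $A$, flat of rank one over $B$, and $M \simeq M^{\vee} \simeq B^{\vee}$ as $B$-modules; and Corollary~\ref{Corollary: pairing on the eigenvariety} supplies the $A$-linear, $B$-equivariant pairing $\beta := \bls\cdot,\cdot\brs_{\kappa_{\calU}}\colon M \times M \to A$. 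Because every module in sight is finite over $A$, the completed tensor products in Definition~\ref{Definition: the adjoint L-ideal} agree with the ordinary ones, so $\scrL^{\adj}(\calV)$ is precisely the ideal $\frakL_{\beta} \subset B$ of Lemma~\ref{Lemma: commutaitve algebra lemma for the principal-ness of the adjoint L-ideal}, and $L_{\calV}^{\adj}$ is a generator of it.

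Next I would use openness of non-degeneracy. By Proposition~\ref{Proposition: relation of L-ideal and Noether's different}(1) there is $b_{0} \in B$ with $\scrL^{\adj}(\calV) = \frakL_{\beta} = b_{0}\,\frakd(R_{\calV}/R_{\calY})$, and by part~(2) $b_{0} \in B^{\times}$ exactly when $\beta$ is non-degenerate; more generally, running the argument of that proof after base change to the residue field at a point $y \in \calY$ shows that the reduction of $b_{0}$ is a unit in the fibre $B \otimes_{A} k(y)$ precisely when the induced pairing there is non-degenerate. Interpreting the hypothesis ``$\beta$ non-degenerate at $\wt(\bfx)$'' as non-degeneracy of this fibre pairing, and recalling that $\bfx$ is the only point of $\calV$ above $\wt(\bfx)$, this says exactly $b_{0} \notin \frakm_{\bfx}$, where $\frakm_{\bfx} \subset R_{\calV}$ is the maximal ideal attached to $\bfx$. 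Since the vanishing locus of $b_{0}$ is closed in $\calV$ and $\wt|_{\calV}$ is finite, its image in $\calY$ is closed; I would therefore shrink $\calY$ to a small enough open affinoid neighbourhood of $\wt(\bfx)$ avoiding that image and replace $\calV$ by the connected component of $\bfx$ in its preimage. This is again a clean, good neighbourhood of $\bfx$ and does not change $L^{\adj}(\bfx)$ (Definition~\ref{Definition: the adjoint p-adic L-function}, Remark~\ref{Remark: adjoint L-ideal as a sheaf}), while now $b_{0} \in R_{\calV}^{\times}$, so $\scrL^{\adj}(\calV) = \frakd(R_{\calV}/R_{\calY})$; that is, $L_{\calV}^{\adj}$ generates the Noether different up to a unit.

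The equivalence then drops out: $L^{\adj}(\bfx) = 0$ means $L_{\calV}^{\adj} \in \frakm_{\bfx}$, which by the previous step is equivalent to $\frakd(R_{\calV}/R_{\calY}) \subseteq \frakm_{\bfx}$, which by the Auslander--Buchsbaum theorem is equivalent to $\frakm_{\bfx}$ being ramified over $R_{\calY}$, i.e.\ to $\wt$ being ramified at $\bfx$. (Consistently, the clean-neighbourhood hypothesis makes $\wt$ étale on $\calV \smallsetminus \{\bfx\}$, so the same theorem forces $\frakd(R_{\calV}/R_{\calY})$ to be either the unit ideal or $\frakm_{\bfx}$-primary, matching the two cases of the dichotomy.)

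I expect the main obstacle to be the passage from the pointwise hypothesis to the global non-degeneracy required by Proposition~\ref{Proposition: relation of L-ideal and Noether's different}(2): one has to make precise that non-degeneracy of a pairing of finite flat modules is an open condition on the base and then, after shrinking, verify that cleanness and goodness of the neighbourhood persist under the base change — in particular that rank-one freeness of $\eta_{\calV}H_{\Par,\kappa_{\calU}}^{\tol,\leq h}$ and of its $R_{\calY}$-dual over the new $R_{\calV}$ survive. The remaining points (that $\eta_{\calV}$ really cuts out a module finite flat over $R_{\calY}$, compatibility of $\scrL^{\adj}$ with restriction, and reconciling $R_{\calU}$ with $R_{\calY}$) are routine given the constructions of \S3.3 and \S4.1.
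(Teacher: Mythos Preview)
Your approach is essentially the same as the paper's: set $A=R_{\calY}$, $B=R_{\calV}$, $M=\eta_{\calV}H_{\Par,\kappa_{\calU}}^{\tol,\leq h}$, invoke Proposition~\ref{Proposition: relation of L-ideal and Noether's different} to identify $\scrL^{\adj}(\calV)$ with $\frakd(R_{\calV}/R_{\calY})$, and then finish with the Auslander--Buchsbaum theorem.

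The one genuine difference is how you handle the phrase ``non-degenerate at $\wt(\bfx)$.'' The paper's proof simply reads this as global non-degeneracy on $\calV$ and applies Proposition~\ref{Proposition: relation of L-ideal and Noether's different}(2) directly, without further comment. You instead interpret it as non-degeneracy of the fibre pairing at $\wt(\bfx)$, deduce $b_0\notin\frakm_{\bfx}$, and then run an openness-and-shrinking argument to make $b_0$ a global unit before applying the proposition. Your reading is arguably the more natural one (and is the one actually verified in Corollary~\ref{Corollary: non-degeneracy of the pairing for cohomologies}, which only checks non-degeneracy at a single classical weight), so your extra step fills a gap the paper leaves implicit. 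The cost is that you must check that cleanness and goodness persist after shrinking, which you correctly flag as the main technical point; this is routine but does require a word.
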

\begin{proof}
Let $\calV$ be a sufficiently small clean neighbourhood of $\bfx$ which is defined by the slope datum $(\calU, h)$ and $\wt(\calV)=\calY$. Since the pairing $$\bls\cdot, \cdot\brs_{\kappa_{\calU}}: \eta_{\calV}H_{\Par, \kappa_{\calU}}^{\tol, \leq h}\times \eta_{\calV}H_{\Par, \kappa_{\calU}}^{\tol, \leq h}\rightarrow R_{\calY}$$ is assumed to be non-degenerate, then by Proposition \ref{Proposition: relation of L-ideal and Noether's different}, $\scrL^{\adj}(\calV)=\frakd(R_{\calV}/R_{\calY})$. Thus, \begin{align*}
    L^{\adj}(\bfx)=0 & \Leftrightarrow L^{\adj}\in \supp \bfx \Leftrightarrow \frakd(R_{\calV}/R_{\calY})\subset \supp\bfx\Leftrightarrow \wt\text{ is ramified at }\bfx,
\end{align*} where the last equivalence is due to Auslander--Buchsbaum's theorem.
\end{proof}

\begin{Theorem}\label{Theorem: vanishing order and the ramification index}
Let $\bfx\in \calE_0^{\fl}$ be a good and smooth point and let $\kappa=\wt(\bfx)$. We further assume $\bfx$ lives in the open locus of $\calE_0^{\fl}$ where $p\neq 0$. Assume again that the pairing $$\bls\cdot, \cdot\brs_{\kappa_{\calU}}: \eta_{\calV}H_{\Par, \kappa_{\calU}}^{\tol, \leq h}\times \eta_{\calV}H_{\Par, \kappa_{\calU}}^{\tol, \leq h}\rightarrow R_{\calY}$$ is non-degenerate at $\wt(\bfx)$. Let $R_{\wt(\bfx)}$ and $R_{\bfx}$ be the local rings at $\wt(\bfx)$ and $\bfx$ respectively and denote by $\frakm_{\wt(\bfx)}$, $\frakm_{\bfx}$ their maximal ideals respectively. Let $\Fitt_{\bfx}$ be the 0-th Fitting ideal of $\Omega_{R_{\bfx}/R_{\wt(\bfx)}}^1$ and define \[
 e(\bfx) : = \max\{e\in\Z_{\geq 0}: \Fitt_{\bfx}\subset \frakm_{\bfx}^e\}.
\] Then, we have \[
    \ord_{\bfx}L^{\adj} = e(\bfx).
\]
%Denote by $e(\bfx)$ the ramification index of $\wt$ at $\bfx$, then it is related with the vanishing order of $L^{\adj}(\bfx)$ by the formula $$\ord_{\bfx}L^{\adj}=e(\bfx)-1.$$
\end{Theorem}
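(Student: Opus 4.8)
\emph{Overview.} The plan is to carry the identity through the following chain: $L^{\adj}$ generates the adjoint $L$-ideal; the adjoint $L$-ideal equals Noether's different (by the assumed non-degeneracy, exactly as in Theorem \ref{Theorem: ramification locus}); Noether's different equals the Kähler (Jacobian) different once one knows the weight map is a complete intersection morphism near $\bfx$; and that Kähler different is the principal ideal $\Fitt_{\bfx}$, whose $\frakm_{\bfx}$-adic order is $e(\bfx)$ by construction. Concretely, I would fix a sufficiently small clean neighbourhood $\calV$ of $\bfx$, defined by a slope datum $(\calU,h)$ with $\wt(\calV)=\calY$, on which $L^{\adj}_{\calV}$ is defined and generates $\scrL^{\adj}(\calV)$ (Proposition \ref{Proposition: adjoint L-dieal at a good point is principal}), shrinking $\calY$ so that the $R_{\calY}$-bilinear, $R_{\calV}$-equivariant pairing $\bls\cdot,\cdot\brs_{\kappa_{\calU}}$ on $\eta_{\calV}H_{\Par,\kappa_{\calU}}^{\tol,\leq h}$ is non-degenerate over all of $\calY$. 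Since $\bfx$ is good, $R_{\calV}$ is Gorenstein over $R_{\calY}$ and $\eta_{\calV}H_{\Par,\kappa_{\calU}}^{\tol,\leq h}$ together with its $R_{\calY}$-dual is free of rank one over $R_{\calV}$, so Proposition \ref{Proposition: relation of L-ideal and Noether's different}(2) applies and gives $\scrL^{\adj}(\calV)=\frakd(R_{\calV}/R_{\calY})$; hence $L^{\adj}_{\calV}$ generates Noether's different. (All the completed tensor products $\widehat{\otimes}_{R_{\calY}}$ occurring in the definition of $\scrL^{\adj}$ coincide with the ordinary $\otimes_{R_{\calY}}$, the modules involved being finite over $R_{\calY}$, so the purely algebraic statements of the previous subsection apply verbatim.)

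\emph{Localizing at $\bfx$ and the complete intersection property.} Since $\bfx$ is the unique point of $\calV$ above $\wt(\bfx)$, passing to local rings gives a finite flat local homomorphism $R_{\wt(\bfx)}\to R_{\bfx}$, and both Noether's different and the $0$-th Fitting ideal of $\Omega^1$ commute with the flat localization $R_{\calY}\to R_{\wt(\bfx)}$; thus $\frakd(R_{\calV}/R_{\calY})R_{\bfx}=\frakd(R_{\bfx}/R_{\wt(\bfx)})$ and $\Fitt_0(\Omega^1_{R_{\calV}/R_{\calY}})R_{\bfx}=\Fitt_{\bfx}$. By hypothesis $\bfx$ is a smooth point of $\calE_0^{\fl}$, so $R_{\bfx}$ is regular, and $\wt(\bfx)$ lies in the $p\neq 0$ (hence smooth) locus of $\calW$, so $R_{\wt(\bfx)}$ is regular. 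A finite flat local homomorphism of regular local rings is a complete intersection homomorphism: by the local flatness criterion the closed fibre $R_{\bfx}/\frakm_{\wt(\bfx)}R_{\bfx}$ is regular, hence (being Artinian) a field and in particular a complete intersection over $R_{\wt(\bfx)}/\frakm_{\wt(\bfx)}$, and this descends along the flat map to a presentation $R_{\bfx}=R_{\wt(\bfx)}[x_1,\dots,x_n]_{\frakm}/(f_1,\dots,f_n)$ with $(f_1,\dots,f_n)$ a regular sequence. Then the conormal sequence presents $\Omega^1_{R_{\bfx}/R_{\wt(\bfx)}}$ as the cokernel of the square Jacobian $J=(\partial f_i/\partial x_j)$, so $\Fitt_{\bfx}=(\det J)$ is principal.

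\emph{Comparison of differents, and reading off the orders; the main obstacle.} The crucial input is that for a finite flat complete intersection homomorphism Noether's different equals the Kähler (Jacobian) different, i.e. $\frakd(R_{\bfx}/R_{\wt(\bfx)})=\Fitt_{\bfx}$ --- the comparison of differents for complete intersections, as used in the $\GL_2$ case in \cite[Chapter VIII]{Bellaiche-eigenbook}. Granting this, the image of $L^{\adj}_{\calV}$ in $R_{\bfx}$ generates $\Fitt_{\bfx}$. Now $R_{\bfx}$ is a regular local domain and the injection $R_{\wt(\bfx)}\hookrightarrow R_{\bfx}$ is generically separable (characteristic $0$), so its different is nonzero; hence $L^{\adj}_{\calV}$ has a well-defined finite $\frakm_{\bfx}$-adic order $\ord_{\bfx}L^{\adj}$, independent of the unit ambiguity in $L^{\adj}_{\calV}$. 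Since $\Fitt_{\bfx}=(L^{\adj}_{\calV})$ inside $R_{\bfx}$, for every $e\in\Z_{\geq 0}$ one has $\Fitt_{\bfx}\subset\frakm_{\bfx}^{e}$ if and only if $L^{\adj}_{\calV}\in\frakm_{\bfx}^{e}$, i.e. if and only if $\ord_{\bfx}L^{\adj}\geq e$; taking the largest such $e$ yields $e(\bfx)=\ord_{\bfx}L^{\adj}$. The main obstacle is precisely the Noether-versus-Kähler different identification, which forces the complete intersection hypothesis and hence the smoothness of $\bfx$ on $\calE_0^{\fl}$ and the restriction to the $p\neq 0$ locus (so that $\calW$ is smooth there); a secondary point to check is that the localization/completion manipulations do not interfere with the completed structures, which is harmless here by the finiteness of all modules over $R_{\calY}$.
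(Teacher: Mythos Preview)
Your overall strategy coincides with the paper's: reduce to $\scrL^{\adj}(\calV)=\frakd(R_{\calV}/R_{\calY})$ via Proposition \ref{Proposition: relation of L-ideal and Noether's different}(2), localize, identify Noether's different with $\Fitt_{\bfx}$, and read off the $\frakm_{\bfx}$-adic orders. The paper simply invokes \cite[Theorem VIII.1.4]{Bellaiche-eigenbook} for the identification $\frakd(R_{\bfx}/R_{\wt(\bfx)})=\Fitt_{\bfx}$, while you unpack this via the complete intersection property; that is fine in principle.

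However, your justification of the complete intersection step contains a genuine error. You assert that for a finite flat local map of regular local rings the closed fibre $R_{\bfx}/\frakm_{\wt(\bfx)}R_{\bfx}$ is regular, hence a field. This is false: take $k[y]_{(y)}\to k[x]_{(x)}$, $y\mapsto x^2$; both rings are regular and the map is finite flat, but the fibre $k[x]/(x^2)$ is not a field. Your claim would force $\wt$ to be \'etale at every smooth point, trivialising the theorem. The relevant direction of the regularity transfer (Matsumura, Theorem 23.7) only says that flatness plus regularity of $B$ implies regularity of $A$, not of the fibre. The correct argument is: choose a surjection from a localized polynomial ring $P$ over $R_{\wt(\bfx)}$ onto $R_{\bfx}$; since $P$ is regular local and $R_{\bfx}$ is regular local, the kernel is generated by part of a regular system of parameters of $P$, hence by a regular sequence of the right length. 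With this fix your Jacobian presentation of $\Omega^1_{R_{\bfx}/R_{\wt(\bfx)}}$ and the identification $\frakd=\Fitt_{\bfx}=(\det J)$ go through as you wrote, and the rest of your argument is correct.
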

\begin{proof}
By \cite[Theorem VIII. 1.4]{Bellaiche-eigenbook}, we have  $\frakd(R_{\bfx}/R_{\kappa})=\Fitt_{\bfx}$ (since $\bfx$ is a smooth point) and so $$e(\bfx)=\max\{e\in \Z_{\geq 0}: \frakd(R_{\bfx}/R_{\kappa})\subset \frakm_{\bfx}^e\}.$$ On the other hand, $$\ord_{\bfx}L^{\adj}:=\max\{e\in \Z_{\geq 0}: L^{\adj}(\bfx)\in \frakm_{\bfx}^{e}\}.$$ In our situation, we see that $$\frakm_{\bfx}^{e(\bfx)}\supset \frakd(R_{\bfx}/R_{\kappa})=L^{\adj}(\bfx)R_{\bfx}\subset \frakm_{\bfx}^{\ord_{\bfx}L^{\adj}}.$$ As the inclusions on both sides satisfy the same condition, the exponents coincide.
\end{proof}

\begin{Remark}\label{Remark: comparing our work to Bellaiche}
\normalfont We remark that the above two theorems have their roots in the $\GL_2$ case. Theorem \ref{Theorem: ramification locus} is an analogue of \cite[Theorem VIII.4.7]{Bellaiche-eigenbook} while Theorem \ref{Theorem: vanishing order and the ramification index} is inspired by [\textit{op. cit.}, Theorem VIII.4.8(i)].
\end{Remark}

\subsection{Non-degeneracy of the pairing}
In the statements of Theorem \ref{Theorem: ramification locus} and Theorem \ref{Theorem: vanishing order and the ramification index}, we assumed that the pairing $\bls\cdot, \cdot\brs_{\wt(\bfx)}$ is non-degenerate at $\wt(\bfx)$. In this subsection, we justify that such an assumption is not vacuous. \\

Let $\kappa: T_{\GL_g,0}\rightarrow R$ be any $p$-adic weight. Recall the pairing $\bls\cdot, \cdot\brs_{\kappa}$ on the parabolic cohomology groups is defined by the pairing 
\begin{align*}
    \bls\cdot, \cdot\brs_{\kappa}^{\circ}: &  D_{\kappa}^{\dagger}(\T_0, R)\times D_{\kappa}^{\dagger}(\T_0, R)\rightarrow R, \\
    & (\mu_1, \mu_2)\mapsto \int_{\T_{00}^2}e_{\kappa}^{\hst}\left(\begin{pmatrix}\trans\bfgamma_2 & \trans\bfupsilon\end{pmatrix}\begin{pmatrix}\one_g\\ & p^{-1}\one_g\end{pmatrix}\begin{pmatrix}\bfgamma_1\\ \bfupsilon_1\end{pmatrix}\right)\quad d\mu_1 d\mu_2.
\end{align*} When $\kappa = k\in \Z_{>0}^g$ is a dominant algebraic weight, recall that we also have algebraic representations $\V_{\GSp_{2g}, k}^{\alg}$ and $\V_{\GSp_{2g}, k}^{\alg, \vee}$ defined in \S \ref{subsection: pairing on the analytic distributions}. From now on, we abuse the notation, writing $\V_{\GSp_{2g}, k}^{\alg}$ and $\V_{\GSp_{2g}, k}^{\alg, \vee}$ for their $\Q_p$-realisation. That is, \begin{align*}
    \V_{\GSp_{2g}, k}^{\alg} & = \scalemath{1}{\left\{\phi: \GSp_{2g}(\Q_p)\rightarrow \Q_p: \begin{array}{l}
        \bullet \,\,\phi\text{ is a polynomial function }  \\
        \bullet\,\,\phi(\bfgamma\bfbeta) = k(\bfbeta)\phi(\bfgamma) \\
        \,\,\,\,\, \forall (\bfgamma, \bfbeta)\in \GSp_{2g}(\Q_p)\times B_{\GSp_{2g}}(\Q_p)  
    \end{array}\right\}}\\
    \V_{\GSp_{2g}, k}^{\alg, \vee} & = \Hom_{\Q_p}(\V_{\GSp_{2g}, k}^{\alg}, \Q_p).
\end{align*} There is an obvious injective morphism 
\[
    \V_{\GSp_{2g}, k}^{\alg}\rightarrow A_k^r(\T_0, \Q_p), \quad \phi\mapsto \left((\bfgamma, \bfupsilon)\mapsto k(\bfbeta)\phi\left(\begin{pmatrix}\bfgamma_0\\ \bfupsilon_0 & \oneanti_g \trans\bfgamma_0^{-1}\oneanti_g\end{pmatrix}\right)\right)
\] for any $r$, where $(\bfgamma, \bfupsilon) = (\bfgamma_0, \bfupsilon_0)\bfbeta$ with $\bfgamma_0\in U_{\GL_g, 1}^{\opp}$ and $\bfbeta\in B_{\GL_g,0}^+$. Therefore, there is a natural surjection $D_k^{\dagger}(\T_0, \Q_p)\rightarrow \V_{\GSp_{2g}, k}^{\alg, \vee}$, which is $\Iw_{\GSp_{2g}}^+$-equivariant. We then descend the pairing $\bls\cdot, \cdot\brs_k^{\circ}$ to $\V_{\GSp_{2g}, k}^{\alg, \vee}$ by the same formula
\begin{align*}
    \bls\cdot, \cdot\brs_k^{\circ}: & \V_{\GSp_{2g}, k}^{\alg, \vee} \times \V_{\GSp_{2g}, k}^{\alg, \vee}\rightarrow \Q_p, \\
    & (\mu_1, \mu_2)\mapsto \scalemath{1}{\int_{\bfgamma_1,\bfgamma_2\in U_{\GSp_{2g}, 1}^{\opp}} e_k^{\hst}\left(\trans\bfgamma_2\begin{pmatrix}\one_g\\ & p^{-1}\one_g\end{pmatrix}\bfgamma_1\right)\quad d\mu_1(\bfgamma_1)d\mu_2(\bfgamma_2).}
\end{align*}

\begin{Proposition}\label{Proposition: non-degeneracy on algebriac representation}
Let $k\in \Z_{>0}^g$ be a dominant weight. Then the pairing $\bls\cdot, \cdot\brs_{k}^{\circ}$ on $\V_{\GSp_{2g}, k}^{\alg, \vee}$ is non-degenerate.
\end{Proposition}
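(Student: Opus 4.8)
The plan is to reduce the non-degeneracy of $\bls\cdot,\cdot\brs_k^{\circ}$ on $\V_{\GSp_{2g}, k}^{\alg, \vee}$ to the non-degeneracy of the symplectic pairing $\bla\cdot,\cdot\bra_k$ from Remark~\ref{Remark: symplecti pairing}, and then to prove the latter by a soft representation-theoretic argument. The comparison between the two pairings is the content of the (unlabelled) remark closing \S\ref{subsection: pairing on the analytic distributions}, which exhibits $\bls\cdot,\cdot\brs_k^{\circ}$ as the twist of $\bla\cdot,\cdot\bra_k$ by the Atkin--Lehner element $\w_p\in\GSp_{2g}(\Q_p)$. Unwinding that remark on the $\Q_p$-realisation, and using that $e_k^{\hst}(\trans YZ)$ depends only on the first $g$ columns of $Y$ and of $Z$, one obtains
\[
\bls\mu_1,\mu_2\brs_k^{\circ} \;=\; \bla\mu_1,\,\w_p\cdot\mu_2\bra_k \qquad\text{for all }\mu_1,\mu_2\in\V_{\GSp_{2g}, k}^{\alg, \vee},
\]
where $\w_p$ acts through the $\GSp_{2g}(\Q_p)$-action on $\V_{\GSp_{2g}, k}^{\alg, \vee}$. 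Since $\w_p$ is invertible, $\mu_2\mapsto\w_p\cdot\mu_2$ is a $\Q_p$-linear automorphism of $\V_{\GSp_{2g}, k}^{\alg, \vee}$, so $\bls\cdot,\cdot\brs_k^{\circ}$ is non-degenerate if and only if $\bla\cdot,\cdot\bra_k$ is; thus it suffices to prove the latter.

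For $\bla\cdot,\cdot\bra_k$ the plan is to use irreducibility. By the equivariance computed in Remark~\ref{Remark: symplecti pairing}, namely $\bla\bfalpha\cdot\mu_1,\mu_2\bra_k=\varsigma(\bfalpha)^{\sum_i k_i}\bla\mu_1,\bfalpha^{-1}\cdot\mu_2\bra_k$ for $\bfalpha\in\GSp_{2g}$, the left kernel $\{\mu_1:\bla\mu_1,\nu\bra_k=0\text{ for all }\nu\}$ is stable under $\GSp_{2g}$, hence is a subrepresentation of $\V_{\GSp_{2g}, k}^{\alg, \vee}$. As $\V_{\GSp_{2g}, k}^{\alg}$ is irreducible, so is its linear dual, and therefore the left kernel is either $0$ or all of $\V_{\GSp_{2g}, k}^{\alg, \vee}$. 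To exclude the second alternative it is enough to produce one pair on which the pairing is nonzero, and by the identity of the previous paragraph this is the same as doing so for $\bls\cdot,\cdot\brs_k^{\circ}$. I would take $\mu_0\in\V_{\GSp_{2g}, k}^{\alg, \vee}$ to be the image of the Dirac distribution at the identity of $U_{\GSp_{2g}, 1}^{\opp}$ under the surjection $D_k^{\dagger}(\T_0,\Q_p)\twoheadrightarrow\V_{\GSp_{2g}, k}^{\alg, \vee}$, so that $\mu_0(\phi)=\phi(\one_{2g})$; then evaluating the defining formula of $\bls\cdot,\cdot\brs_k^{\circ}$ on $(\mu_0,\mu_0)$ gives
\[
\bls\mu_0,\mu_0\brs_k^{\circ} \;=\; e_k^{\hst}\!\left(\begin{pmatrix}\one_g & \\ & p^{-1}\one_g\end{pmatrix}\right)\;=\;1\;\neq\;0,
\]
since every leading principal minor of size $\le g$ of that diagonal matrix equals $1$. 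Hence the left kernel of $\bla\cdot,\cdot\bra_k$ vanishes, which for a bilinear form on a finite-dimensional space forces non-degeneracy, and therefore $\bls\cdot,\cdot\brs_k^{\circ}$ is non-degenerate as well.

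The one step that needs genuine care is the comparison identity in the first paragraph: one must check that the $p^{-1}$-normalisation built into $\Phi_{\kappa}$ — and hence into $\bls\cdot,\cdot\brs_k^{\circ}$ — matches, after transporting from $\T_{00}$ to $\GSp_{2g}(\Q_p)$, precisely the $\w_p$-twist of the classical symplectic pairing (equivalently, that the descended $\Phi_k^{\alg}$ followed by the action of $\mathrm{diag}(\one_g,p^{-1}\one_g)$ computes $\bls\cdot,\cdot\brs_k^{\circ}$). This is bookkeeping with the definitions of \S\ref{subsection: pairing on the analytic distributions}, but it is the only place where one really has to be attentive; once it is in place the non-degeneracy is formal, being irreducibility plus Schur's lemma plus the one-line minor computation above, with the awkward Atkin--Lehner factor playing no role because it is invertible. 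We note in passing that one would like to drop the hypothesis that $k$ is algebraic and handle general $p$-adic weights, as Bella\"{i}che does for $\GL_2$, but his argument rests on an explicit computation that seems intractable for $g>1$.
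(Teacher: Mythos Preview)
Your argument is correct and is, in some respects, cleaner than the paper's. Both proofs ultimately reduce to the non-degeneracy of the symplectic pairing $\bla\cdot,\cdot\bra_k$, but the routes differ. The paper proceeds by a chain of restrictions: it invokes non-degeneracy of $\bla\cdot,\cdot\bra_k$ as inherited from the standard symplectic form on $\V_{\Z}$, then passes to an auxiliary pairing $\bla\cdot,\cdot\bra_k'$ integrated only over $U_{\GSp_{2g}}^{\opp}(\Q_p)$, twists by $\w_p$ to obtain $\bls\cdot,\cdot\brs_k'$, and finally appeals to Zariski density of $U_{\GSp_{2g},1}^{\opp}$ in $U_{\GSp_{2g}}^{\opp}(\Q_p)$ to descend non-degeneracy to $\bls\cdot,\cdot\brs_k^{\circ}$. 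You instead collapse the middle steps by observing directly that, as bilinear forms on the finite-dimensional space $\V_{\GSp_{2g},k}^{\alg,\vee}$, one has $\bls\mu_1,\mu_2\brs_k^{\circ}=\bla\mu_1,\w_p\cdot\mu_2\bra_k$ (the integrand defines the same element of $\V^{\alg}_k\otimes\V^{\alg}_k$ regardless of whether one writes the variables in $\GSp_{2g}(\Q_p)$ or in $U_{\GSp_{2g},1}^{\opp}$, since restriction is an isomorphism on polynomial functions); and you supply an independent, self-contained proof that $\bla\cdot,\cdot\bra_k$ is non-degenerate via irreducibility, $\GSp_{2g}$-stability of the kernel, and the explicit evaluation $\bls\mu_0,\mu_0\brs_k^{\circ}=1$. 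Your approach avoids both the appeal to how the symplectic form on $\V_{\Z}$ induces $\bla\cdot,\cdot\bra_k$ (which the paper asserts but does not spell out) and the Zariski-density step; what the paper's formulation buys is that it makes visible the passage from the algebraic domain to the $p$-adic open cell, which would be the relevant bridge if one tried to push beyond algebraic weights.
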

\begin{proof}
Recall the symplectic pairing $\bla\cdot, \cdot\bra_k$ on $\V_{\GSp_{2g}, k}^{\alg, \vee}$ from Remark \ref{Remark: symplecti pairing}
\[
    \bla \mu_1, \mu_2\bra_k = \int_{\bfgamma_1, \bfgamma_2\in \GSp_{2g}(\Q_p)}e_{k}^{\hst}\left(\trans\bfgamma_2\begin{pmatrix} & -\oneanti_g\\ \oneanti_g\end{pmatrix}\bfgamma_1\right)\quad d\mu_1(\bfgamma_1)d\mu_2(\bfgamma_2).
\] Since the symplectic pairing $\bla\cdot, \cdot\bra$ on $\V_{\Z}$ is non-degenerate, we know that $\bla\cdot, \cdot\bra_k$ is non-degenerate.\\

Define 
\begin{align*}
    \bla\cdot, \cdot\bra_k': & \V_{\GSp_{2g}, k}^{\alg, \vee}\times \V_{\GSp_{2g}, k}^{\alg, \vee}\rightarrow \Q_p, \\
    & (\mu_1, \mu_2)\mapsto \scalemath{1}{\int_{\bfgamma_1, \bfgamma_2\in U_{\GSp_{2g}}^{\opp}(\Q_p)}e_k^{\hst}\left(\trans\bfgamma_2\begin{pmatrix} & -\oneanti_g\\ \oneanti_g\end{pmatrix} \bfgamma_1\right)\quad d\mu_1(\bfgamma_1)d\mu_2(\gamma_2).}
\end{align*} Then $\bla\cdot, \cdot\bra_k'$ is a non-degenerate pairing. Indeed, we have \begin{align*}
    \bla \mu_1, \mu_2\bra_k &  = \scalemath{1}{\int_{\bfgamma_1, \bfgamma_2\in \GSp_{2g}(\Q_p)}e_{k}^{\hst}\left(\trans\bfgamma_2\begin{pmatrix} & -\oneanti_g\\ \oneanti_g\end{pmatrix}\bfgamma_1\right)\quad d\mu_1(\bfgamma_1)d\mu_2(\bfgamma_2)}\\
    & = \scalemath{1}{\int_{\bfgamma_1, \bfgamma_2\in \GSp_{2g}(\Q_p)}k(\beta_1)k(\beta_2)e_{k}^{\hst}\left(\trans\bfgamma_2'\begin{pmatrix} & -\oneanti_g\\ \oneanti_g\end{pmatrix}\bfgamma_1'\right)\quad d\mu_1(\bfgamma_1)d\mu_2(\bfgamma_2),}
\end{align*} where $\bfgamma_i=\bfgamma_i'\bfbeta$ with $\bfgamma_i'\in U_{\GSp_{2g}}^{\opp}(\Q_p)$ and $\bfbeta_i\in B_{\GSp_{2g}}(\Q_p)$ for $i=1, 2$. As $k$ is non-zero on $B_{\GSp_{2g}}(\Q_p)$, we see that $\bla\mu_1, \mu_2\bra_k = 0$ if and only if $\bla\mu_1, \mu_2\bra_k'=0$.\\

Now, let $\bls\cdot, \cdot\brs_k'$ be the pairing on $\V_{\GSp_{2g}, k}^{\alg, \vee}$ defined by 
\begin{align*}
    \bls \mu_1, \mu_2\brs_k' & := \bla \mu_1, \w_p\cdot\mu_2\bra_k' \\
    & = \int_{\bfgamma_1, \bfgamma_2\in U_{\GSp_{2g}}^{\opp}(\Q_p)}e_{k}^{\hst}\left(\trans\bfgamma_2\begin{pmatrix}\one_g \\ & p^{-1}\one_g\end{pmatrix}\bfgamma_1\right)\quad d\mu_1(\bfgamma_1)d\mu_2(\bfgamma_2).
\end{align*} Then, $\bls\cdot, \cdot\brs_k'$ is again a non-degenerate pairing since $\w_p\in \GSp_{2g}(\Q_p)$. Recall that $U_{\GSp_{2g}, 1}^{\opp}\simeq \Z_p^{d_0}$, for some $d_0\in \Z_{>0}$, as $p$-adic manifolds, thus $U_{\GSp_{2g}}^{\opp}(\Q_p)\simeq \Q_p^{d_0}$. However, $\V_{\GSp_{2g}, k}^{\alg, \vee}$ is defined algebraically and $\Z_p^{d_0}\subset \Q_p^{d_0}$ is Zariski dense, thus the non-degeneracy of $\bls\cdot, \cdot\brs_k'$ implies the non-degeneracy of $\bls\cdot, \cdot\brs_{k}^{\circ}$.
\end{proof}

\begin{Theorem}[Control theorem]\label{Theorem: control theorem}
For $g\in \Z_{>0}$, let $k=(k_1,..., k_g)\in \Z_{>0}^g$ be a dominant algebraic weight. Then, there exists $h_k\in \R_{>0}$ (depending on $k$) such that for any $\Q_{>0}\ni h< h_k$, we have a canonical isomorphism $$H^t_{\Par}(X_{\Iw^+}(\C), D_{k}^{\dagger})^{\leq h}\simeq H_{\Par}^t(X_{\Iw^+}(\C), \V_{\GSp_{2g}, k}^{\alg, \vee})^{\leq h}.$$ (see also \cite[Theorem 6.4.1]{Ash-Stevens})
\end{Theorem}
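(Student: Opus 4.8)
This is, in substance, the control theorem of Ash--Stevens \cite[Theorem 6.4.1]{Ash-Stevens}, and the plan is to reprove it in the present set-up using the adic formalism of \cite{Johansson-Newton}. Recall from the discussion preceding the statement the $\Iw_{\GSp_{2g}}^+$-equivariant surjection $D_k^{\dagger}(\T_0, \Q_p)\twoheadrightarrow \V_{\GSp_{2g}, k}^{\alg, \vee}$, dual to the inclusion $\V_{\GSp_{2g}, k}^{\alg}\hookrightarrow A_k^r(\T_0, \Q_p)$; write $N_k^{\dagger}$ (resp. $N_k^r$ at finite radius) for its kernel, the distributions annihilating every weight-$k$ algebraic function. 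The first thing to check is that this surjection is equivariant not only for $\Iw_{\GSp_{2g}}^+$ but for the whole monoid $\Xi$, hence for the Hecke data $\bfu_{p, i}$ and $\bfu_p$; since $\V_{\GSp_{2g}, k}^{\alg, \vee}$ is finite-dimensional, $N_k^r$ is a finite-codimensional closed, hence complemented, subspace of $D_k^r(\T_0, \Q_p)$, so it is again potentially ON-able and $U_p$ acts on it compactly.

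Next I would apply the functor $C^{\bullet}(\Iw_{\GSp_{2g}}^+, -)$ --- which preserves the exactness of $0\to N_k^r\to D_k^r(\T_0, \Q_p)\to \V_{\GSp_{2g}, k}^{\alg, \vee}\to 0$, the Borel--Serre cochain complex being built degreewise as a finite direct sum of copies of the coefficient module --- together with its compactly supported and boundary variants, getting $U_p$-equivariant short exact sequences of cochain complexes. The crux is then a slope estimate: I claim there is $h_k\in \R_{>0}$, depending only on $k$, such that $p^{-h_k}U_p$ preserves the unit ball of $\oplus_t C^t(\Iw_{\GSp_{2g}}^+, N_k^r)$, equivalently such that the Fredholm determinant $\det(1-TU_p\mid \oplus_t C^t(\Iw_{\GSp_{2g}}^+, N_k^r))$ has all slopes $\geq h_k$. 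This is where the $\GSp_{2g}$ structure enters: from the explicit description of the $\bfu_{p, i}$-action on $\T_0$ in the subsection on Hecke operators at $p$ --- conjugation and left multiplication by the diagonal blocks $\bfu_{p, i}^{\square}$, $\bfu_{p, i}^{\blacksquare}$ whose entries are powers of $p$ --- one sees that $\bfu_p$ scales each coordinate of $U_{\GSp_{2g}, 1}^{\opp}\simeq \Z_p^{d_0}$ by a strictly positive power of $p$, so that on the subspace of $D_k^r(\T_0, \Q_p)$ cut out by vanishing against the finitely many monomials spanning $\V_{\GSp_{2g}, k}^{\alg}$ --- i.e. on $N_k^r$ --- one gains a uniform strictly positive amount of $p$-divisibility; quantifying this gain, as in \cite[\S 6]{Ash-Stevens} (see also \cite[\S\S 2--3]{Hansen-PhD}), produces $h_k$.

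Granting the estimate, fix $\Q_{>0}\ni h< h_k$. The Fredholm determinant of $U_p$ on $\oplus_t C^t(\Iw_{\GSp_{2g}}^+, D_k^r)$ factors as the product of those on the sub- and quotient complexes (the sequence splitting as Banach modules, with $U_p$ block-triangular), so all of its roots of slope $\leq h$ come from $\V_{\GSp_{2g}, k}^{\alg, \vee}$; taking slope-$\leq h$ parts --- an exact operation, \cite[\S 2.3]{Johansson-Newton} --- in the short exact sequences of complexes and using $C^{\bullet}(\Iw_{\GSp_{2g}}^+, N_k^r)^{\leq h}=0$ and $C_c^{\bullet}(\Iw_{\GSp_{2g}}^+, N_k^r)^{\leq h}=0$, one gets quasi-isomorphisms
\[
C^{\bullet}(\Iw_{\GSp_{2g}}^+, D_k^r)^{\leq h}\xrightarrow{\ \sim\ } C^{\bullet}(\Iw_{\GSp_{2g}}^+, \V_{\GSp_{2g}, k}^{\alg, \vee})^{\leq h},
\]
and the same with $C_c^{\bullet}$ in place of $C^{\bullet}$, compatibly with the maps $C_c^{\bullet}\to C^{\bullet}$. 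Since for $h$ in this range the slope-$\leq h$ part of $H^{\bullet}(X_{\Iw^+}(\C), D_k^{\dagger})$ is already computed by any such $D_k^r$ (independence of the radius, \cite[\S 4.1]{Johansson-Newton}), passing to cohomology yields canonical, Hecke-compatible isomorphisms $H^t_c(X_{\Iw^+}(\C), D_k^{\dagger})^{\leq h}\simeq H^t_c(X_{\Iw^+}(\C), \V_{\GSp_{2g}, k}^{\alg, \vee})^{\leq h}$ and $H^t(X_{\Iw^+}(\C), D_k^{\dagger})^{\leq h}\simeq H^t(X_{\Iw^+}(\C), \V_{\GSp_{2g}, k}^{\alg, \vee})^{\leq h}$, compatible with the natural maps $H^t_c\to H^t$. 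As taking slope-$\leq h$ parts is exact, it commutes with passage to images, and applying it to $H^t_{\Par}=\image(H^t_c\to H^t)$ on both sides gives the asserted isomorphism on parabolic cohomology.

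I expect the main obstacle to be the slope estimate of the second paragraph. Unlike the $\GL_2$ situation in \cite[Chapter VI]{Bellaiche-eigenbook}, one must track the several operators $\bfu_{p, i}$ simultaneously, the non-diagonal shape of $\Iw_{\GSp_{2g}}^+$, and the interaction with all the components $k_1\geq\cdots\geq k_g$ of the weight, in order to pin down an honest (let alone optimal) $h_k>0$. A secondary point to be careful about is the strict $U_p$-compatibility of the short exact sequence $0\to N_k^{\dagger}\to D_k^{\dagger}(\T_0, \Q_p)\to \V_{\GSp_{2g}, k}^{\alg, \vee}\to 0$ and of its passage to Borel--Serre complexes, which is what licenses the factorisation of the Fredholm determinant and the commutation of the slope decomposition with the long exact sequences; this is provided by the machinery of \cite{Johansson-Newton} but should be invoked explicitly.
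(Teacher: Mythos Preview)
Your proposal is correct and follows essentially the same route as the paper: both introduce the kernel of the surjection $D_k^{\dagger}\twoheadrightarrow \V_{\GSp_{2g}, k}^{\alg, \vee}$, establish a slope estimate $h_k>0$ on that kernel (the paper packages this as $||U_p||_{\K}\leq p^{-h}$ and the invertibility of $Q^*(U_p)$, you as vanishing of the slope-$\leq h$ part of the kernel complex), and conclude via the short/long exact sequence. You are in fact slightly more careful than the paper in that you explicitly run the argument for $C_c^{\bullet}$ as well and deduce the parabolic statement from the compatibility with $H_c^t\to H^t$, whereas the paper's written proof treats only $H^t$ and leaves the passage to $H_{\Par}^t$ implicit.
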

\begin{proof}
Let $\K:=\ker(D_k^{\dagger}(\T_0, \Q_p)\rightarrow \V_{\GSp_{2g}, k}^{\alg, \vee})$ and so we have an exact sequence 
\[
0\rightarrow C^{\bullet}(\Iw_{\GSp_{2g}}^+, \K)\rightarrow C^{\bullet}(\Iw_{\GSp_{2g}}^+, D_{k}^{\dagger})\rightarrow C^{\bullet}(\Iw_{\GSp_{2g}}^+, \V_{\GSp_{2g}, k}^{\alg, \vee})\rightarrow 0.
\] Define the $[\Iw_{\GSp_{2g}}^+ \bfu_{p,i}\Iw_{\GSp_{2g}}^+]$-action on $\V_{\GSp_{2g}, k}^{\alg, \vee}$ as the same formula on $D_{k}^{\dagger}(\T_0, \Q_p)$ but with $\bfu_{p,i}$ acting on $\V_{\GSp_{2g}, k}^{\alg, \vee}$ via the conjugation \[
    \bfu_{p,i} \cdot \bfgamma = \bfu_{p,i}\bfgamma \bfu_{p,i}^{-1}
\] on $\GSp_{2g}(\Q_p)$. We then see that the map \[ C^{\bullet}(\Iw_{\GSp_{2g}}^+, D_{k}^{\dagger})\rightarrow C^{\bullet}(\Iw_{\GSp_{2g}}^+, \V_{\GSp_{2g}, k}^{\alg, \vee}) \] is Hecke equivariant and so $C^{\bullet}(\Iw_{\GSp_{2g}}^+, \K)$ is Hecke stable. Denote by $C_{\K}^{\tol}$ and $C_{k, \alg}^{\tol}$ the total cochain complexes of $C^{\bullet}(\Iw_{\GSp_{2g}}^+, \K)$ and $C^{\bullet}(\Iw_{\GSp_{2g}}^+, \V_{\GSp_{2g}, k}^{\alg, \vee})$ respectively. Additionally, let $F_k^{\dagger}$ and $F_k^{\alg}$ be the Fredholm determinant of $U_p$ acting on $C_k^{\tol}$ and $C_{k,\alg}^{\tol}$ respectively.  We define \begin{align*}
    h_{\K} := &\, \scalemath{1}{\sup\left\{h\in \Q_{>0}: ||U_p||_{\K}:=\sup\left\{ \frac{||U_p\cdot \sigma||_{\dagger}}{||\sigma||_{\dagger}}: \,\,\forall \sigma\in  C_{\K}^{\tol}\right\}\leq p^{-h}\right\},}\\
    &\,\, \scalemath{1}{\text{where $||\cdot||_{\dagger}$ is the norm on $C_{k}^{\tol}$}}\\
    h_{\dagger} := &\, \scalemath{1}{\sup\{h\in \Q_{>0}: F_k^{\dagger}=Q^{\dagger}S^{\dagger}\text{ satisfying conditions in Proposition \ref{Proposition: properties of the slope decomposition of the Fredholm determinant} w.r.t. $h$}\}}\\
    h_{\alg} := &\, \scalemath{1}{\sup\{h\in \Q_{>0}: F_k^{\alg}=Q^{\alg}S^{\alg}\text{ satisfying conditions in Proposition \ref{Proposition: properties of the slope decomposition of the Fredholm determinant} w.r.t. $h$}\}}\\
    h_k := &\, \scalemath{1}{\min\{h_{\K}, h_{\dagger}, h_{\alg}\}}.
\end{align*}

Now, we claim the following: Fix $\Q_{>0}\ni h<h_{\K}$, if $Q\in \Q_p[X]$ with $Q^*(0)\in \Q_p^\times$ and the slope of $Q$ is $\leq h$, then $Q^*(U_p)$ acts on $C^{\tol}_{\K}$ invertibly. Write $Q=a_0+a_1X+\cdots + a_nX^n$. The two conditions on $Q$ means \begin{enumerate}
    \item[$\bullet$] $a_n\in \Q_p^\times$
    \item[$\bullet$] $v_p(a_n)-v_p(a_i)\leq (n-i)h$ for all $i=0, ..., n-1$.
\end{enumerate} Therefore, we have $$|a_i/a_n|<p^{(n-i)h}\text{ and }\left|\left|\frac{a_i}{a_n}U_p^{n-i}\right|\right|_{\K}<1.$$ Let $P(X)=-\frac{a_0}{a_n}X^n-\frac{a_1}{a_n}X^{n-1}-\cdots-\frac{a_{n-1}}{a_n}X$, then $\frac{1}{a_n}Q^*(X)=1-P(X)$. We can deduce that $||P(U_p)||_{\K}<1$ and so $Q^*(U_p)$ acts on $C_{\K}^{\tol}$ invertibly with inverse given explicitly by  $$Q^*(U_p)^{-1}=\frac{1}{a_n}\sum_{j\geq 0}P(U_p)^j.$$

Now fix $h<h_k$, then we have the corresponding decomposition $F_k^{\dagger}=Q^{\dagger}_hS^{\dagger}_h$ and $F_k^{\alg}=Q^{\alg}_h S^{\alg}_h$ and $$C_{k}^{\tol, \leq h}\twoheadrightarrow C_{k, \alg}^{\tol, \leq h}$$ with $C_k^{\tol, \leq h}=\ker Q^{\dagger, *}_h(U_p|_{C_k^{\tol}})$ and $C_{k, \alg}^{\tol, \leq h}=\ker Q_h^{\alg, *}(U_p|_{C_{k, \alg}^{\tol}})$. Let $C_{\K}^{\tol, \leq h}$ be the kernel of the surjection, then, by taking cohomology, we have the corresponding long exact sequence 
\[
\begin{tikzcd}
    \cdots\arrow[r] &  H^t(X_{\Iw^+}(\C), \K)^{\leq h}\arrow[r] &  H^t(X_{\Iw^+}(\C), D_{k}^{\dagger})^{\leq h}\arrow[ld, out = -20, in = 160] \\
    & H^t(X_{\Iw^+}(\C), \V_{\GSp_{2g}, k}^{\alg, \vee})^{\leq h}\arrow[r] &  H^{t+1}(X_{\Iw^+}(\C), \K)^{\leq h}\arrow[r] & \cdots
\end{tikzcd} .
\] The above claim shows that both $Q_h^{\dagger, *}(U_p)$ and $Q_h^{\alg, *}(U_p)$ act on $H^t(X_{\Iw^+}(\C), \K)^{\leq h}$ invertibly. Take any $\sigma\in H^t(X_{\Iw^+}(\C), \K)^{\leq h}$, the image of $Q_h^{\dagger}(U_p)\sigma$ in $H^t(X_{\Iw^+}(\C), D_k^{\dagger})^{\leq h}$ is zero, thus there exists $\sigma'\in H^{t-1}(X_{\Iw^+}(\C), \V_{\GSp_4, k}^{\alg, \vee})^{\leq h}$ whose image in $H^t(X_{\Iw^+}(\C), \K)^{\leq h}$ is $Q_h^{\dagger, *}(U_p)\sigma$. Since $Q_h^{\alg, *}(U_p)\sigma'=0$, thus $Q_h^{\alg, *}(U_p)Q_h^{\dagger, *}(U_p)\sigma=0$. This implies $\sigma=0$ so the desired isomorphism follows. 
\end{proof}

\begin{Remark}\label{Remark: different control theorem}
\normalfont The above control theorem is basically \cite[Theorem 6.4.1]{Ash-Stevens} with only a slight modification. There is another version of the control theorem by \cite[Proposition 4.3.10]{Urban-2011} (see also \cite[Theorem 3.2.5]{Hansen-PhD}), which gives a more explicit description of the bound $h_k$. However, the control theorem in \cite{Urban-2011} requires a modification on the Shimura varieties while this is not the case in our version.
\end{Remark}

\begin{Corollary}\label{Corollary: non-degeneracy of the pairing for cohomologies}
Let $\kappa = k\in \Z_{>0}^g$ be a dominant algebraic weight. Then the pairing $$\bls\cdot, \cdot\brs_{k}: H_{\Par, k}^{\tol, \leq h}\times H_{\Par, k}^{\tol, \leq h}\rightarrow \Q_p$$ is non-degenerate when $h<h_k$. %in the following two situations:\begin{enumerate}
%    \item the weight $\kappa$ is purely non-algebraic
%    \item the weight $\kappa=k\in \Z_{>0}^2$ is a dominant algebraic weight and $h< h_k$.
%\end{enumerate}
\end{Corollary}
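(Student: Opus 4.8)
The plan is to deduce this from the non-degeneracy at the level of algebraic representations (Proposition \ref{Proposition: non-degeneracy on algebriac representation}) together with the control theorem (Theorem \ref{Theorem: control theorem}), exactly in the spirit of the corresponding argument in \cite[Chapter VIII]{Bellaiche-eigenbook}. First I would recall that by Theorem \ref{Theorem: control theorem} there is a canonical isomorphism $H^t_{\Par}(X_{\Iw^+}(\C), D_k^{\dagger})^{\leq h}\simeq H^t_{\Par}(X_{\Iw^+}(\C), \V_{\GSp_{2g}, k}^{\alg, \vee})^{\leq h}$ for all $t$ and all $h<h_k$, and that this isomorphism is induced by the $\Iw_{\GSp_{2g}}^+$-equivariant surjection $D_k^{\dagger}(\T_0, \Q_p)\twoheadrightarrow \V_{\GSp_{2g}, k}^{\alg, \vee}$ of coefficient modules. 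The key compatibility to check is that the pairing $\bls\cdot,\cdot\brs_k^{\circ}$ on $D_k^{\dagger}(\T_0,\Q_p)$ descends, under this surjection, to the pairing $\bls\cdot,\cdot\brs_k^{\circ}$ on $\V_{\GSp_{2g},k}^{\alg,\vee}$ defined by the same formula in \S \ref{subsection: non-degeneracy of the pairing}; this is essentially immediate from the definitions, since $e_k^{\hst}$ restricted to $\T_{00}\simeq U_{\GSp_{2g},1}^{\opp}$ is the same function used in the algebraic formula, and $U_{\GSp_{2g},1}^{\opp}\subset U_{\GSp_{2g}}^{\opp}(\Q_p)$ is Zariski dense. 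Hence the cup-product pairing $\bls\cdot,\cdot\brs_k$ on $H^{\bullet}_{\Par}(X_{\Iw^+}(\C), D_k^{\dagger})^{\leq h}$ is identified, via the control isomorphism, with the cup-product pairing on $H^{\bullet}_{\Par}(X_{\Iw^+}(\C), \V_{\GSp_{2g},k}^{\alg,\vee})^{\leq h}$ built from the algebraic pairing.

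Next I would invoke Poincaré duality for the local system $\V_{\GSp_{2g},k}^{\alg,\vee}$: since the algebraic pairing $\bls\cdot,\cdot\brs_k^{\circ}$ on $\V_{\GSp_{2g},k}^{\alg,\vee}$ is non-degenerate by Proposition \ref{Proposition: non-degeneracy on algebriac representation}, it realises $\V_{\GSp_{2g},k}^{\alg,\vee}$ as self-dual (up to the similitude twist, which is a character and hence harmless for non-degeneracy), so the cup product $H^t_c(X_{\Iw^+}(\C),\V_{\GSp_{2g},k}^{\alg,\vee})\times H^{2n_0-t}(X_{\Iw^+}(\C),\V_{\GSp_{2g},k}^{\alg,\vee})\to \Q_p$ is a perfect pairing by Poincaré--Lefschetz duality (with $\Q_p$-coefficients on a manifold of dimension $2n_0$). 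Restricting to the finite-slope part and passing to the parabolic (interior) cohomology, the standard fact that the image of compactly supported cohomology in ordinary cohomology is its own orthogonal complement under the duality pairing shows that the induced pairing on $H^t_{\Par}\times H^{2n_0-t}_{\Par}$ is non-degenerate; this is the analogue of the classical statement that the interior cohomology carries a perfect pairing. Summing over $t$ gives non-degeneracy of $\bls\cdot,\cdot\brs_k$ on the total parabolic cohomology $H^{\tol,\leq h}_{\Par, k}$.

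The main obstacle, and the step requiring the most care, is the compatibility of the cup-product pairing with the $U_p$-slope decomposition — i.e.\ making sure that the non-degenerate pairing on the full (interior) cohomology induces a non-degenerate pairing after cutting out the slope-$\leq h$ part. Here one uses that $U_p$ acting on one factor is adjoint to the Atkin--Lehner-twisted operator $\bfu_p^{\Shi}$ (equivalently $\w_p$-conjugate) on the other factor, as recorded in Proposition \ref{Proposition: main property of the pairing} and in the $U_{p,i}$-equivariance computation in the proof of Corollary \ref{Corollary: pairing on the eigenvariety}; since $\bfu_p^{\Shi}$ has the same slopes as $\bfu_p$ (the similitude and the $\w_p$-twist only shift valuations by a fixed constant), the slope-$\leq h$ part pairs into the slope-$\leq h$ part and the slope-$>h$ parts are orthogonal to the slope-$\leq h$ parts, so the restriction of a perfect pairing to the slope-$\leq h$ summand remains perfect. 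Combining this with the control isomorphism of the previous paragraph yields the claim. Finally, I would remark that because $\Q_p$ is a field the distinction between non-degenerate and perfect is immaterial here, so no further finiteness or flatness input is needed beyond what the control theorem already provides.

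\begin{proof}
By Theorem \ref{Theorem: control theorem}, for $h<h_k$ the surjection of coefficient modules $D_k^{\dagger}(\T_0,\Q_p)\twoheadrightarrow \V_{\GSp_{2g},k}^{\alg,\vee}$ induces canonical isomorphisms $H^t_{\Par}(X_{\Iw^+}(\C), D_k^{\dagger})^{\leq h}\simeq H^t_{\Par}(X_{\Iw^+}(\C), \V_{\GSp_{2g},k}^{\alg,\vee})^{\leq h}$ for all $t$. Since $e_k^{\hst}|_{\T_{00}}$ agrees with the function used to define the algebraic pairing and $U_{\GSp_{2g},1}^{\opp}\subset U_{\GSp_{2g}}^{\opp}(\Q_p)$ is Zariski dense, the pairing $\bls\cdot,\cdot\brs_k^{\circ}$ on $D_k^{\dagger}(\T_0,\Q_p)$ descends to the pairing $\bls\cdot,\cdot\brs_k^{\circ}$ on $\V_{\GSp_{2g},k}^{\alg,\vee}$; hence the cup-product pairing $\bls\cdot,\cdot\brs_k$ on $H^{\tol,\leq h}_{\Par, k}$ is identified with the corresponding cup-product pairing built from the algebraic pairing on $H^{\bullet}_{\Par}(X_{\Iw^+}(\C),\V_{\GSp_{2g},k}^{\alg,\vee})^{\leq h}$.

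By Proposition \ref{Proposition: non-degeneracy on algebriac representation} the pairing $\bls\cdot,\cdot\brs_k^{\circ}$ on $\V_{\GSp_{2g},k}^{\alg,\vee}$ is non-degenerate, so it identifies $\V_{\GSp_{2g},k}^{\alg,\vee}$ with its own $\Q_p$-linear dual up to the similitude character $\varsigma^{\sum k_i}$. Poincaré--Lefschetz duality on the $2n_0$-dimensional manifold $X_{\Iw^+}(\C)$ with $\Q_p$-coefficients then gives a perfect cup-product pairing
\[
H^t_c(X_{\Iw^+}(\C), \V_{\GSp_{2g},k}^{\alg,\vee})\times H^{2n_0-t}(X_{\Iw^+}(\C), \V_{\GSp_{2g},k}^{\alg,\vee})\longrightarrow \Q_p,
\]
under which the image of $H^t_c$ in $H^t$, namely $H^t_{\Par}$, is its own orthogonal complement; therefore the induced pairing on $H^t_{\Par}\times H^{2n_0-t}_{\Par}$ is non-degenerate.

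It remains to see that non-degeneracy survives passage to the slope-$\leq h$ part. By Proposition \ref{Proposition: main property of the pairing} (and the $U_{p,i}$-equivariance computation in the proof of Corollary \ref{Corollary: pairing on the eigenvariety}), the operator $U_p$ acting on one factor is adjoint under $\bls\cdot,\cdot\brs_k$ to the operator attached to $\bfu_p^{\Shi}$ on the other factor. Since $\bfu_p^{\Shi}$ differs from $\bfu_p$ only by the similitude and a $\w_p$-twist, which shift $p$-adic valuations of eigenvalues by a fixed constant, it has the same finite slopes; consequently the slope-$>h$ parts on the two factors are orthogonal to the slope-$\leq h$ parts, and the restriction of the perfect pairing above to the slope-$\leq h$ summand $H^{t,\leq h}_{\Par}\times H^{2n_0-t,\leq h}_{\Par}$ remains non-degenerate. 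Summing over $t$ gives that $\bls\cdot,\cdot\brs_k$ on $H^{\tol,\leq h}_{\Par, k}$ is non-degenerate, as claimed.
\end{proof}
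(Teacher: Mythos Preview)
Your proof is correct and follows the same route the paper sketches in its one-line proof: reduce via the control theorem to algebraic coefficients, then combine Proposition \ref{Proposition: non-degeneracy on algebriac representation} with Poincar\'e--Lefschetz duality on $X_{\Iw^+}(\C)$ to obtain non-degeneracy on interior cohomology. One small simplification: since each $\bfu_{p,i}$ is diagonal one has $\bfu_{p,i}^{\Shi}=\bfu_{p,i}$ on the nose, which is exactly what the computation in the proof of Corollary \ref{Corollary: pairing on the eigenvariety} establishes, so $U_p$ is genuinely self-adjoint and your detour through a $\w_p$-twist and slope-shift argument is unnecessary.
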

\begin{proof}
This is an easy consequence of Proposition \ref{Proposition: non-degeneracy on algebriac representation} and Theorem \ref{Theorem: control theorem}.
\end{proof}

We conclude the paper by the following immediate consequence of Theorem \ref{Theorem: ramification locus}, Theorem \ref{Theorem: vanishing order and the ramification index} and Corollary \ref{Corollary: non-degeneracy of the pairing for cohomologies}. 

\begin{Corollary}\label{Corollary: main result of the paper}
Suppose $\bfx\in \calE_0^{\fl}$ is a good classical point, i.e., $\bfx$ satisfies the following conditions\begin{enumerate}
    \item[$\bullet$] $\bfx$ is a good point; 
    \item[$\bullet$] $\wt(\bfx) = k\in \Z_{>0}$ is a dominant algebraic weight; and 
    \item[$\bullet$] there is a slope datum $(\calU, h)$ such that $\bfx\in \calU$ and $h<h_k$.
\end{enumerate} Then \begin{enumerate}
    \item The adjoint $p$-adic $L$-function $L^{\adj}$ vanishes at $\bfx$ if and only if the weight map $\wt: \calE_0\rightarrow \calW$ is ramified at $\bfx$. 
    \item If $\bfx$ is furthermore a smooth point of $\calE_0^{\fl}$, let $e(\bfx)$ be as defined in Theorem \ref{Theorem: vanishing order and the ramification index}, then we have $\ord_{\bfx}L^{\adj} = e(\bfx)$. %let $e(\bfx)$ be the ramification index of $\wt$ at $\bfx$, then we have $\ord_{\bfx}L^{\adj} = e(\bfx)-1$.
\end{enumerate} 
\end{Corollary}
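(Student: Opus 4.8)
The plan is to verify that the three conditions defining a \emph{good classical point} supply precisely the hypotheses of Theorem \ref{Theorem: ramification locus} and Theorem \ref{Theorem: vanishing order and the ramification index}; granting this, assertions (1) and (2) follow verbatim from those theorems. The only step that is more than bookkeeping is to deduce the non-degeneracy hypothesis occurring there — non-degeneracy of $\bls\cdot,\cdot\brs_{\kappa_{\calU}}$ on $\eta_{\calV}H_{\Par,\kappa_{\calU}}^{\tol,\leq h}$ at $\wt(\bfx)$ — from the global statement of Corollary \ref{Corollary: non-degeneracy of the pairing for cohomologies}.

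First I would fix a sufficiently small clean neighbourhood $\calV$ of $\bfx$, cut out by the slope datum $(\calU, h)$ of the third bullet (so $h < h_k$), with $\wt(\calV)=\calY$; then $L^{\adj}_{\calV}$ is defined by Proposition \ref{Proposition: adjoint L-dieal at a good point is principal} and Definition \ref{Definition: the adjoint p-adic L-function}. Since $\wt(\bfx)=k$ is a dominant algebraic weight it is a $\Q_p$-rational point of $\calW$ lying in the open locus where $p\neq 0$; hence $\bfx$ itself lies in the $p\neq 0$ locus of $\calE_0^{\fl}$, and the extra hypothesis of Theorem \ref{Theorem: vanishing order and the ramification index} is automatic once $\bfx$ is known to be smooth. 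So nothing beyond non-degeneracy needs checking.

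For the non-degeneracy, I would specialise at the $\Q_p$-point $\wt(\bfx)=k$ of $\calW$. By the flat base-change isomorphism proved inside Proposition \ref{Proposition: parabolic cohomolgies gives coherent sheaves on the Fredholm hypersurface}, this recovers $H_{\Par,k}^{\tol,\leq h}$ together with its pairing $\bls\cdot,\cdot\brs_k$, and $\eta_{\calV}$ specialises to the idempotent projecting onto the generalised Hecke-eigenspace attached to $\bfx$ — a single block, since $\bfx$ is the only point of $\calV$ above $\wt(\bfx)$ by the clean-neighbourhood condition. Because $\bls\cdot,\cdot\brs_k$ is $\bbT$-equivariant by Corollary \ref{Corollary: pairing on the eigenvariety}, distinct generalised eigenspaces are mutually orthogonal: given $v$ in one block and $w$ in another, choose $t$ in the maximal ideal of the second block that acts invertibly on the first; then $t$ is nilpotent on the second, so $\bls t^{N}v,w\brs_k=\bls v,t^{N}w\brs_k=0$ for large $N$, and invertibility of $t^{N}$ on the first block forces $\bls v,w\brs_k=0$. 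Thus $\eta_{\calV}H_{\Par,k}^{\tol,\leq h}$ is orthogonal to its complement, and non-degeneracy of $\bls\cdot,\cdot\brs_k$ on the whole space — valid by Corollary \ref{Corollary: non-degeneracy of the pairing for cohomologies} since $h<h_k$ — restricts to non-degeneracy on this block, which is exactly non-degeneracy of $\bls\cdot,\cdot\brs_{\kappa_{\calU}}$ on $\eta_{\calV}H_{\Par,\kappa_{\calU}}^{\tol,\leq h}$ at $\wt(\bfx)$. Feeding this into Theorem \ref{Theorem: ramification locus} yields (1), and into Theorem \ref{Theorem: vanishing order and the ramification index} yields (2).

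I expect the third paragraph to carry essentially all of the content: one must make the phrase ``non-degenerate at $\wt(\bfx)$'' precise (non-degeneracy of the pairing induced on the fibre of $\eta_{\calV}H_{\Par,\kappa_{\calU}}^{\tol,\leq h}$ over the image point $\wt(\bfx)\in\calY$) and then check that the idempotent $\eta_{\calV}$ really does specialise to an orthogonal projector, which rests on the base-change compatibility of finite-slope parabolic cohomology together with self-adjointness of $\bbT$ and the elementary fact that distinct generalised eigenspaces of a commutative self-adjoint algebra acting on a finite-dimensional space over a field are mutually orthogonal under a non-degenerate invariant pairing. The remaining verifications are immediate appeals to results already established.
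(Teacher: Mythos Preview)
Your proposal is correct and follows the same route as the paper, which simply declares the corollary an immediate consequence of Theorem \ref{Theorem: ramification locus}, Theorem \ref{Theorem: vanishing order and the ramification index}, and Corollary \ref{Corollary: non-degeneracy of the pairing for cohomologies}. You supply the one detail the paper leaves implicit---that non-degeneracy on the full finite-slope space restricts to non-degeneracy on the block $\eta_{\calV}H_{\Par,k}^{\tol,\leq h}$ because $\eta_{\calV}\in\bbT_{\Par}^{\calU,h}$ is self-adjoint for the $\bbT$-equivariant pairing---and your orthogonality argument is sound.
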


\printbibliography[heading=bibintoc]

\vspace{15mm}

\begin{tabular}{l}
    Concordia University   \\
    Department of Mathematics and Statistics\\
    Montr\'{e}al, Qu\'{e}bec, Canada\\
    \textit{E-mail address: }\texttt{ju-feng.wu@mail.concordia.ca }
\end{tabular}

\end{document}